\documentclass[11pt]{article}
\pdfoutput=1

\usepackage{graphicx}
\usepackage{amsmath}
\usepackage{amssymb}
\usepackage{amsthm}
\usepackage[margin=1in]{geometry}
\usepackage{hyperref}
\usepackage{tikz}

\newcommand{\Hclos}{\widehat{\mathbb{H}}}

\newtheorem{theorem}{Theorem}[section]
\newtheorem{lemma}[theorem]{Lemma}
\newtheorem{proposition}[theorem]{Proposition}
\newtheorem{corollary}[theorem]{Corollary}

\theoremstyle{definition}
\newtheorem{definition}[theorem]{Definition}
\newtheorem{remark}[theorem]{Remark}

\title{Analogue of Cardy's formula on the 60-degree parallelogram: a modular approach}
\author{Cody R. Strouse}
\date{\today}

\begin{document}
\maketitle

\begin{abstract}
The \emph{Cardy--Smirnov function} of a four-marked planar domain encodes the conjectural scaling limit of the crossing probability for critical percolation. Cardy~\cite{Cardy1992} predicted a closed formula for this limit on the rectangle in the cases of both site and bond percolation, and Smirnov~\cite{Smirnov} proved the formula together with conformal invariance for critical site percolation on the triangular lattice. Kleban and Zagier~\cite{KZ} later showed that on the rectangle the function admits a modular interpretation: it is determined by a modular functional equation together with a mild analytic ansatz. We carry out the analogue on the $\pi/3$ parallelogram. We derive a closed conformal-map formula for the Cardy--Smirnov function as an incomplete beta integral, prove a closed modular formula realizing it as an integral of the modular form $\eta(\tau)^2\eta(3\tau)^2$, and establish a uniqueness theorem showing that a single functional equation, together with a $q$-expansion ansatz and a nondegeneracy condition, determines the function uniquely.
\end{abstract}

\section{Introduction}

\subsection{Percolation and the Cardy--Smirnov function}

\emph{Site percolation} is a random model on a lattice: each lattice site (vertex) is independently colored \emph{open} (with probability $p$) or \emph{closed} (with probability $1-p$). Fix a simply connected planar domain $R$ with four marked boundary points dividing $\partial R$ into four arcs $A,B,C,D$ in clockwise cyclic order, and consider the model at the critical probability $p = p_c$, the threshold above which an infinite connected cluster of open sites appears almost surely. (For site percolation on the triangular lattice, $p_c=1/2$~\cite{Kesten1982}.) For a lattice of mesh $a$ contained in $R$, let $\mathcal C_a(R)$ denote the event that there exists a connected cluster of open sites joining the arc $A$ to the arc $C$, and write $\mathbb P_a(\mathcal C_a(R))$ for its probability. The \emph{Cardy--Smirnov function of the domain $R$} is the limit
\[
h(R) \;:=\; \lim_{a\to 0}\,\mathbb P_a(\mathcal C_a(R)),
\]
when this limit exists.

Cardy~\cite{Cardy1992} predicted, and Smirnov~\cite{Smirnov} proved in the special case of critical site percolation on the triangular lattice, that $h$ is a \emph{conformal invariant}: it depends on $R$ only through its conformal equivalence class. Thus $h$ may be regarded as a function on the one-parameter family of conformal equivalence classes of four-marked simply connected domains (parametrized, for instance, by the cross-ratio of the four boundary points under any conformal map to the upper half-plane), and computing $h$ on any one representative determines its value on all conformally equivalent ones. This is the point of view we adopt throughout. Cardy originally conjectured his formula for both site and bond percolation, and the same conformal invariance is expected to hold on other lattices in the same generality; it is widely believed that the scaling limit exists in that generality, but this remains open. We refer the reader to Grimmett's exposition~\cite{Grimmett} for the precise statements of Smirnov's results and for further context.

Cardy's original formula was stated for the rectangle and is recalled in~\S\ref{subsec:cardy-original} below. Smirnov's reformulation on the equilateral triangle, due originally to L.\ Carleson (unpublished; see~\cite{Smirnov}), takes a strikingly simple form: with vertices $V_1, V_2, V_3$ in counterclockwise cyclic order and a fourth marked point $X$ on the side $V_2 V_3$, the Cardy--Smirnov function for the crossing probability from the side $V_1 V_2$ to the boundary segment $X V_3$ equals the Euclidean length $|XV_3|$ when the triangle is scaled to have unit side. We refer to this as the \emph{Carleson form} of Cardy's formula. The two representatives---rectangle and equilateral triangle---are conformally equivalent as four-marked domains, and Cardy's rectangle formula and Smirnov--Carleson's triangle formula are equivalent descriptions of the same conformal invariant.

A subsequent contribution due to Kleban and Zagier~\cite{KZ} was a reformulation of the rectangle case in terms of modular forms: they expressed the Cardy--Smirnov function of the rectangle as an integral of a weight-$2$ modular form, and characterized it uniquely via a modular functional equation. This is the result we recall in~\S\ref{subsec:KZ} and that we extend in the present paper.

The purpose of this paper is to carry out the analogous modular analysis on a natural next family of domains: the parallelogram with smallest interior angle $\pi/3$. Starting from a Schwarz--Christoffel map construction, we prove three results about the Cardy--Smirnov function on this domain: a closed conformal-map formula as an incomplete beta integral, a closed modular formula realizing it as an integral of the modular form $\eta(\tau)^2\eta(3\tau)^2$, and a uniqueness theorem; see Theorems~\ref{thm:cardy-parallelogram}, \ref{thm:main-almost-modular}, and~\ref{thm:uniqueness} below.

See Figures~\ref{fig:scaling-limit} and~\ref{fig:rect-vs-parallelogram} for the scaling limit on the parallelogram and the comparison between the rectangle and parallelogram settings.

\begin{figure}[htbp]
\centering
\includegraphics[width=\textwidth]{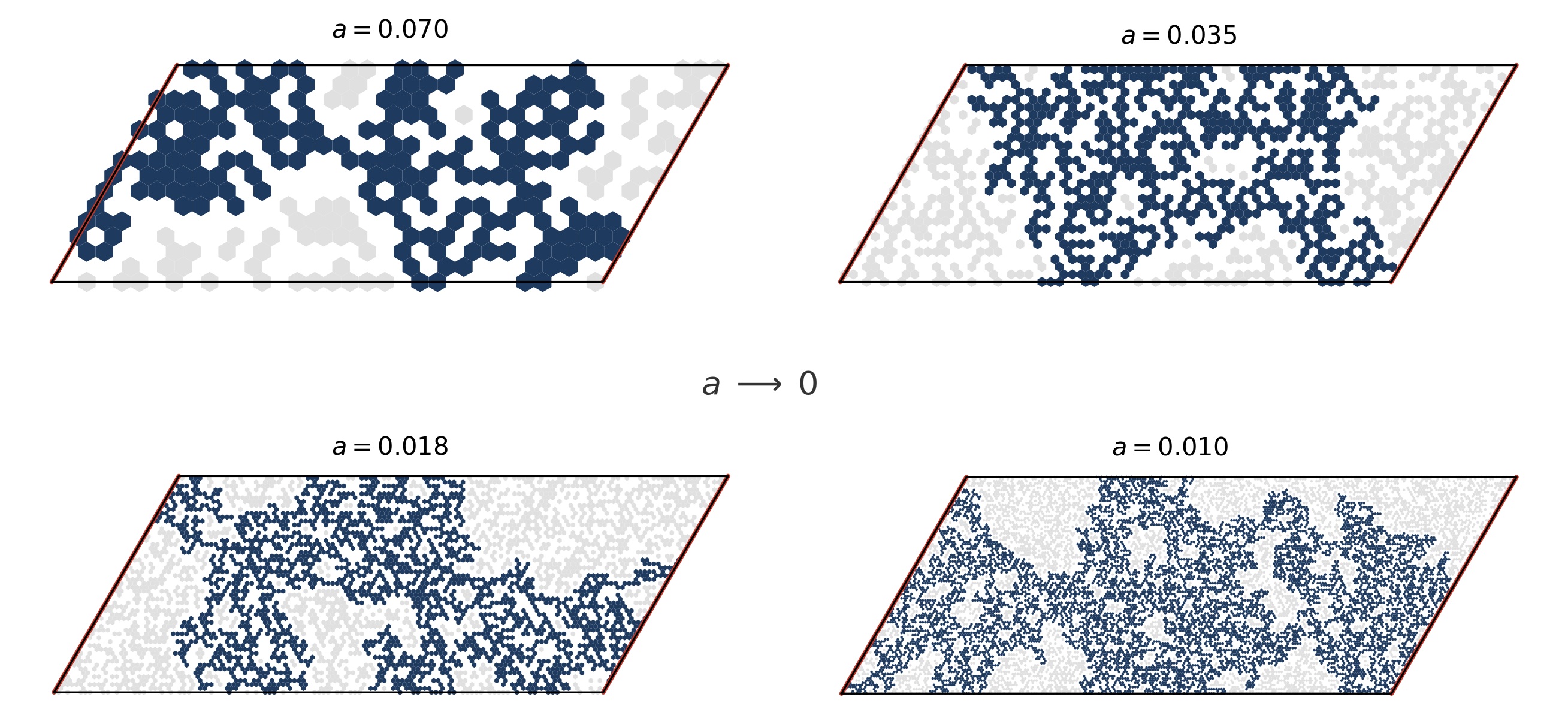}
\caption{Critical site percolation on the $\pi/3$ parallelogram at successively finer lattice spacings $a$. The highlighted cluster is the largest connected cluster of open sites joining the left side to the right side (a \emph{left--right crossing}). Conjecturally, as $a\to 0$ such clusters converge in distribution to a random fractal whose crossing probability is a conformal invariant of the domain.}
\label{fig:scaling-limit}
\end{figure}
\begin{figure}[htbp]
\centering
\includegraphics[width=\textwidth]{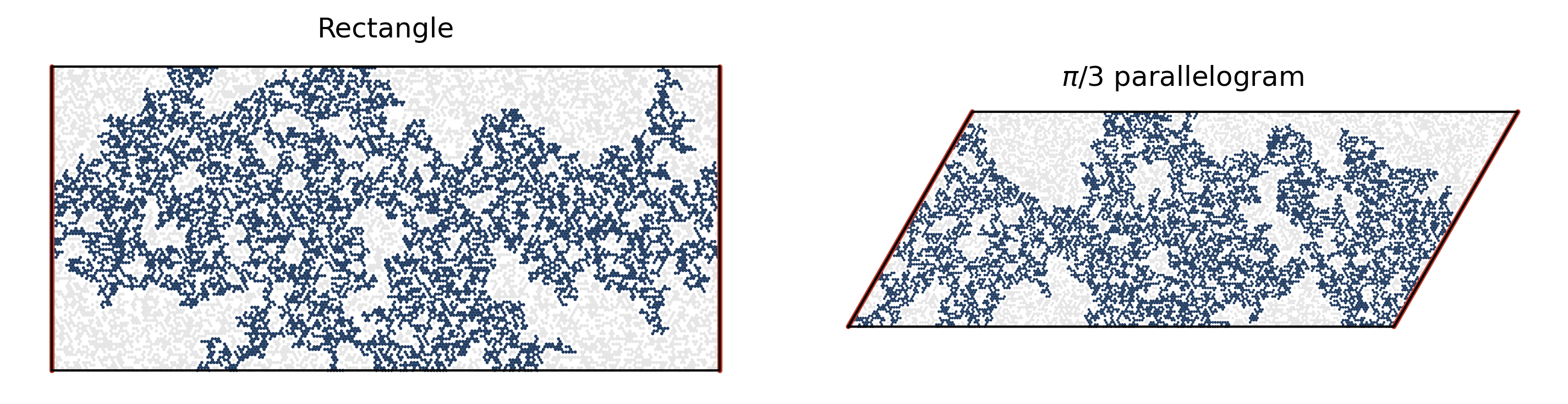}
\caption{Critical site percolation on the triangular lattice ($p=p_c=1/2$) in a rectangle (the setting of Kleban--Zagier~\cite{KZ}) and in a $\pi/3$ parallelogram, the setting of this work.}
\label{fig:rect-vs-parallelogram}
\end{figure}

Throughout, $\mathbb H = \{\tau \in \mathbb C : \Im\tau > 0\}$ denotes the open upper half-plane, and $\Hclos = \mathbb H \cup \mathbb R \cup \{\infty\}$ its closure in the Riemann sphere $\mathbb P^1(\mathbb C)$. We set $q = e^{2\pi i\tau}$ and denote by $\eta(\tau) = q^{1/24}\prod_{n\ge 1}(1-q^n)$ the Dedekind eta function. The Euler beta function is $B(a,b) = \Gamma(a)\Gamma(b)/\Gamma(a+b)$, and ${}_2F_1(a,b;c;z) = \sum_{n\ge 0}\frac{(a)_n(b)_n}{(c)_n\,n!}\,z^n$ is the Gauss hypergeometric function, with $(x)_n = x(x+1)\cdots(x+n-1)$ the Pochhammer symbol.

\subsection{Cardy's formula}
\label{subsec:cardy-original}

Conformally mapping the rectangle to the upper half-plane sends its four corners to four real points $0$, $\lambda$, $1$, $\infty$ (after a M\"obius normalization), and $\lambda\in(0,1)$ is then the cross-ratio of these four points---the standard conformal invariant of a four-marked rectangle. For a rectangle of aspect ratio $r$ the relevant value is $\lambda = \lambda(ir)$, where
\[
\lambda(\tau) \;=\; 16\,\frac{\eta(\tau/2)^8\,\eta(2\tau)^{16}}{\eta(\tau)^{24}}
\]
is the classical modular lambda function. Cardy's original 1992 prediction~\cite{Cardy1992} for the Cardy--Smirnov function $\Pi_h^{\mathrm{rect}}$ of the rectangle is
\begin{equation}
\label{eq:cardy-original}
\Pi_h^{\mathrm{rect}}(r) \;=\; \frac{2\pi\sqrt{3}}{\Gamma(1/3)^3}\,\lambda^{1/3}\,{}_2F_1\!\left(\tfrac13,\tfrac23;\tfrac43;\lambda\right),\qquad \lambda = \lambda(ir).
\end{equation}
Smirnov~\cite{Smirnov} subsequently proved the Carleson form of Cardy's formula on the equilateral triangle and, in the same paper, proved conformal invariance of percolation crossing probabilities in this setting. Together, these two results imply~\eqref{eq:cardy-original} on the rectangle via the conformal bijection between the rectangle and the equilateral triangle as four-marked domains. The combined effect of~\cite{Cardy1992,Smirnov} is that, for critical site percolation on the triangular lattice, the Cardy--Smirnov function~\eqref{eq:cardy-original} is established as a theorem and has the explicit closed form predicted by Cardy.

\subsection{The work of Kleban--Zagier}
\label{subsec:KZ}

Let $R_r$ denote the $1\times r$ rectangle, regarded as a four-marked domain with the corners as distinguished points, and consider the Cardy--Smirnov function $\Pi_h^{\mathrm{rect}}(r)$, as given in~\eqref{eq:cardy-original}. Kleban and Zagier give a \emph{modular interpretation} of $\Pi_h^{\mathrm{rect}}$: a closed formula expressing it as an integral of a weight-$2$ modular form, together with a uniqueness theorem characterizing it via a single functional equation.

\begin{theorem}[Kleban--Zagier~\cite{KZ}]
\label{thm:KZ}
The following hold.
\begin{enumerate}
\item\emph{(Formula.)} The Cardy--Smirnov function of the rectangle is given by
\[
\Pi_h^{\mathrm{rect}}(r)=\frac{2^{7/3}\pi^2}{\sqrt{3}\,\Gamma(1/3)^3}\int_{r}^{\infty}\eta(it)^4\,dt.
\]
\item\emph{(Characterization.)} Let $F:(0,\infty)\to\mathbb R$ be any function such that
\begin{itemize}
\item[\textup{(i)}] $F(r)=\sum_{n\ge 0} a_n\,\widehat q^{\,2n+\alpha}$ with $\widehat q=e^{-\pi r}$, some $\alpha>0$, and $a_0\ne 0$ (an \emph{even conformal block of dimension $\alpha$} in the terminology of~\cite{KZ}: only even powers of $\widehat q$ accompany $\widehat q^{\,\alpha}$);
\item[\textup{(ii)}] $F(1/r)=1-F(r)$.
\end{itemize}
Then $\alpha=1/3$ and $F=\Pi_h^{\mathrm{rect}}$.
\end{enumerate}
\end{theorem}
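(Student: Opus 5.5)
The plan is to prove the two parts separately. For the formula we differentiate Cardy's expression~\eqref{eq:cardy-original} in $r$ and recognize the result as a multiple of $\eta(ir)^4$. For the characterization we turn the functional equation (ii) into an $S$-transformation law for a weight-$2$ object, divide by $\eta^4$, and apply a valence-type argument on $\mathrm{PSL}_2(\mathbb Z)$.

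\emph{Formula.} First I write $\lambda^{1/3}{}_2F_1(\tfrac13,\tfrac23;\tfrac43;\lambda)=\tfrac13\int_0^\lambda t^{-2/3}(1-t)^{-2/3}\,dt$, which follows from the Euler integral, or termwise. Differentiating in $\lambda$ then gives $\tfrac13\lambda^{-2/3}(1-\lambda)^{-2/3}$. Next I use the classical identities
\[
\lambda=\theta_2^4/\theta_3^4,\qquad 1-\lambda=\theta_4^4/\theta_3^4,\qquad \lambda'(\tau)=\pi i\,\lambda(1-\lambda)\theta_3^4,\qquad \theta_2\theta_3\theta_4=2\eta^3 .
\]
With $\tau=ir$ these yield $\frac{d\lambda}{dr}=-\pi\lambda(1-\lambda)\theta_3^4$, and hence
\[
\frac{d}{dr}\Pi_h^{\mathrm{rect}}(r)=-\frac{2\pi\sqrt3}{\Gamma(1/3)^3}\cdot\frac{\pi}{3}\,(\theta_2\theta_3\theta_4)^{4/3}=-\frac{2^{7/3}\pi^2}{\sqrt3\,\Gamma(1/3)^3}\,\eta(ir)^4 .
\]
As $r\to\infty$ we have $\lambda(ir)\to0$, so $\Pi_h^{\mathrm{rect}}(r)\to0$. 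Integrating from $r$ to $\infty$ gives part 1.

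\emph{Characterization.} I interpret (i) as saying that $F(r)=f(ir)$, where $f(\tau)=\sum_n a_nq^{n+\alpha/2}$ converges on $\mathbb H$; this convergence is the analytic part of the ansatz. Put $G=f'$, suitably normalized. Differentiating (ii) gives $F'(1/r)=r^2F'(r)$. By the identity theorem applied along the imaginary axis, $G(-1/\tau)=\tau^2G(\tau)$ then holds on all of $\mathbb H$. Set $h=G/\eta^4$. Since $\eta^4$ has weight $2$ and is nonvanishing on $\mathbb H$, $h$ is holomorphic on $\mathbb H$ and satisfies
\[
h(-1/\tau)=h(\tau),\qquad h(\tau+1)=e^{2\pi i\beta}h(\tau),\qquad \beta=\tfrac{\alpha}{2}-\tfrac16 .
\]
Evaluating $h$ along the relation $(ST)^3=1$ in $\mathrm{PSL}_2(\mathbb Z)$ gives $e^{6\pi i\beta}=1$, so $3\beta\in\mathbb Z$. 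Consequently $h^3$ is a $\mathrm{PSL}_2(\mathbb Z)$-invariant holomorphic function on $\mathbb H$, and its $q$-expansion begins at $q^{3\beta}$ with nonzero coefficient, because $a_0\neq0$. Now $\alpha>0$ forces $3\beta>-\tfrac12$, so $3\beta\ge0$.
\begin{itemize}
\item If $3\beta>0$, then $h^3$ is a holomorphic modular function vanishing at the cusp, hence identically zero, which contradicts $a_0\ne0$.
\item Therefore $3\beta=0$. Then $\alpha=1/3$ and $h$ is constant, so $G=c\,\eta^4$.
\end{itemize}
Since the expansion in (i) has no constant term, $F\to0$ as $r\to\infty$. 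Hence $F(r)=c'\int_r^\infty\eta(it)^4\,dt$. Setting $r=1$ in (ii) gives $F(1)=\tfrac12$, which fixes $c'$. It remains to check that $\Pi_h^{\mathrm{rect}}$ itself satisfies (i) and (ii). Condition (i) follows from part 1 by integrating the $q$-series of $\eta^4$ termwise. For (ii), the weight-$2$ invariance of $\eta(it)^4$ under $t\mapsto1/t$ shows that $\int_r^\infty+\int_{1/r}^\infty=\int_0^\infty$ is independent of $r$. So both $\Pi_h^{\mathrm{rect}}$ and $F$ equal the unique multiple of $\int_r^\infty\eta(it)^4\,dt$ with value $\tfrac12$ at $r=1$, and $F=\Pi_h^{\mathrm{rect}}$.

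The main obstacle is the analytic continuation step. One must make precise that (i) is a convergent expansion defining a holomorphic function on $\mathbb H$, so that the identity on the imaginary axis propagates to all of $\mathbb H$, and that the order at the cusp is exactly $3\beta$. I would build this convergence into the reading of (i), as Kleban--Zagier do, rather than derive it. The multiplier computation through $(ST)^3$ is routine but must be done carefully with the branch of $\tau^2$ and of $\eta^4$.
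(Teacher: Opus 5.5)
Part~1 is correct and follows the route the paper indicates. The paper quotes Theorem~\ref{thm:KZ} from~\cite{KZ} rather than reproving it, but its $n=2$ case in \S\ref{sec:discussion} records your identity $\lambda^{1/3}(1-\lambda)^{1/3}\theta_3^4=(\theta_2\theta_3\theta_4)^{4/3}=2^{4/3}\eta^4$ as the rectangle counterpart of Lemma~\ref{lem:dvdtau} and Theorem~\ref{thm:modular-derivative}, and your constant checks out.

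Part~2 is also correct, but it takes a different route from the one the paper attributes to~\cite[\S2]{KZ} and transposes to level $3$ in \S\ref{sec:rigidity}.
\begin{itemize}
\item \emph{Paper's route.} In the rectangle version of Steps~1--4, one iterates the affine relation $F(ST\tau)=1-AF(\tau)$ to constrain $A=e^{\pi i\alpha}$. One then places $f^6$ in $S_{12}(\mathrm{PSL}_2(\mathbb Z))=\mathbb C\Delta$ and reads off $\alpha=\tfrac13$ and $f\propto\eta^4$ from $f^6=c\Delta$.
\item \emph{Your route.} You divide by the nonvanishing form $\eta^4$ first and work in weight $0$. The character is pinned by $(ST)^3=1$, the bound $3\beta>-\tfrac12$ does the job of the valence formula, and the only analytic input is that a holomorphic $\mathrm{PSL}_2(\mathbb Z)$-invariant function holomorphic at the cusp is constant.
\end{itemize}
Your route avoids dimension formulas and uses only the differentiated form of~(ii) until the final normalization. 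It does need the candidate $\eta^4$ in hand, which part~1 supplies. The paper's route is intrinsic and exhibits $\dim S_{12}$ as the quantity whose jump to $2$ at level $3$ forces hypothesis~(iii).

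Your method sees that obstruction too. Dividing by $\eta(\tau)^2\eta(3\tau)^2$ on $\Gamma_0^+(3)$ gives cusp order $\alpha-\tfrac13>-\tfrac13$ with $6(\alpha-\tfrac13)\in\mathbb Z$. This now permits the value $-1$, a sixth power with a simple pole at the cusp, which is exactly the case containing the spurious solution of Remark~\ref{rem:ghost}. At level $1$ the bound $3\beta>-\tfrac12$ leaves no such room.

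Three repairs are needed.

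\emph{The sign in your $S$-law is wrong.} Since $r^2=-\tau^2$ on $\tau=ir$, differentiating~(ii) gives $G(-1/\tau)=-\tau^2G(\tau)$. Likewise $\eta(-1/\tau)^4=(-i\tau)^2\eta(\tau)^4=-\tau^2\eta(\tau)^4$. Both have $S$-eigenvalue $-1$, the rectangle analogue of $v_\eta(W_3)=-1$, so $h\circ S=h$ does hold. But it holds only because your two slips cancel. Pairing your stated law for $G$ with the correct one for $\eta^4$ would give $e^{6\pi i\beta}=-1$, and hence no solutions at all.

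\emph{The normalization constant is not checked.} Your verification of~(ii) for $\Pi_h^{\mathrm{rect}}$ only shows that $\Pi_h^{\mathrm{rect}}(r)+\Pi_h^{\mathrm{rect}}(1/r)$ is constant. To conclude $\Pi_h^{\mathrm{rect}}(1)=\tfrac12$ you must show the constant is $1$. This follows from $\lambda(ir)\to1$ as $r\to0^+$ together with $\frac{2\pi\sqrt3}{3\,\Gamma(1/3)^3}B(\tfrac13,\tfrac13)=1$. Alternatively, it follows directly from $\lambda(i)=\tfrac12$ and the symmetry $t\mapsto1-t$ of the integrand.

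\emph{The convergence you call the main obstacle is automatic.} Hypothesis~(i) is asserted for every $r>0$, so $\sum_n a_nx^n$ converges for every $x\in(0,1)$. Its radius of convergence is therefore at least $1$, and $f$ is holomorphic on all of $\mathbb H$.
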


The curious feature of Theorem~\ref{thm:KZ}(2) is that a single modular-style symmetry, together with the assumption that $F$ is built from a $q$-series block, is enough to determine the Cardy--Smirnov function uniquely. The evenness in~(i) cannot be dropped: Kleban and Zagier show (their Theorem~2) that without it the same functional equation admits a one-parameter family of solutions, one for each $\alpha\in(0,\tfrac12]$. The condition is best understood through cusp widths. The cross-ratio $\lambda$ is a hauptmodul for $\Gamma(2)$, whose cusp at $i\infty$ has width $2$, so the nome native to the rectangle problem is $\widehat q=e^{\pi i\tau}$, and a general block in $\widehat q$ yields a scalar transformation law only under $\tau\mapsto\tau+2$; evenness is exactly the requirement that the block descend to the width-one nome $q=\widehat q^{\,2}$, upgrading the transformation law to $\tau\mapsto\tau+1$ and hence the ambient group from the theta group $\langle S,T^2\rangle=\Gamma_\theta$ (two cusps) to $\langle S,T\rangle=\mathrm{PSL}_2(\mathbb Z)$ (one cusp). In the parallelogram setting the hauptmodul $v$ of Theorem~\ref{thm:cardy-parallelogram} lives on $\Gamma_0(3)$, whose cusp at $i\infty$ already has width $1$; hypothesis~(i) of Theorem~\ref{thm:uniqueness} is therefore stated directly in the width-one nome $q=e^{2\pi i\tau}$, and the analogue of the parity condition is vacuous.

\subsection{Main results}
\label{subsec:main-results}

Let $P_r$ denote the closed $\pi/3$ parallelogram with vertices $0$, $r$, $r+e^{i\pi/3}$, $e^{i\pi/3}$ in cyclic order, with boundary $\partial P_r$. We define the Cardy--Smirnov function of $P_r$, which we denote by $\Pi_h^{\mathrm{par}}(r)$, purely conformally: it is the value of the conformal invariant $h$ at the conformal equivalence class of $P_r$. Concretely, by the Riemann mapping theorem there is a conformal map $P_r \to \Delta$ onto the equilateral triangle $\Delta$, unique once we require it to take the four marked vertices of $P_r$ to four prescribed marked boundary points of $\Delta$ (the Riemann mapping theorem gives existence and uniqueness up to a M\"obius automorphism of the disk, a three-real-parameter ambiguity that is removed by fixing the images of the four marked points). With this normalization, $\Pi_h^{\mathrm{par}}(r)$ is the corresponding Carleson length, as in~\S\ref{subsec:cardy-original}. Probabilistically, this is conjecturally the limiting crossing probability between the two short sides for critical percolation on any reasonable lattice, and is known to be such for critical site percolation on the triangular lattice~\cite{Smirnov}.

We prove three main results. The first is an analogue of Cardy's original formula~\eqref{eq:cardy-original}: a closed expression for $\Pi_h^{\mathrm{par}}(r)$ as an incomplete beta integral.

\begin{theorem}[Analogue of Cardy's formula for the $\pi/3$-parallelogram]
\label{thm:cardy-parallelogram}
For $r>0$ let $u(r)\in(0,1)$ denote the prevertex parameter of the Schwarz--Christoffel uniformization $\Hclos\to P_r$ that sends $(0,u,1,\infty)$ to the four vertices of $P_r$ in cyclic order, and set $v(r):=1-u(r)$; these are well defined by Theorem~\ref{thm:r-ratio} and Lemma~\ref{lem:endpoint-r}. Then $u,v:(0,\infty)\to(0,1)$ are real-analytic, and the following hold.
\begin{enumerate}
\item[\textup{(i)}] The Cardy--Smirnov function of $P_r$ is the incomplete beta integral
\[
\Pi_h^{\mathrm{par}}(r)\;=\;\frac{1}{B(\tfrac13,\tfrac13)}\int_{u(r)}^1 w^{-2/3}(1-w)^{-2/3}\,dw \;=\; \frac{3\,v(r)^{1/3}}{B(\tfrac13,\tfrac13)}\,{}_2F_1\!\left(\tfrac13,\tfrac23;\tfrac43;v(r)\right).
\]
Here $B(a,b) = \Gamma(a)\Gamma(b)/\Gamma(a+b)$ is the Euler beta function; in particular
\[
B(\tfrac13,\tfrac13) \;=\; \frac{\Gamma(1/3)^2}{\Gamma(2/3)} \;=\; \frac{\sqrt{3}\,\Gamma(1/3)^3}{2\pi}.
\]
\item[\textup{(ii)}] Writing $\tau=ir/\sqrt3$ and $\eta$ for the Dedekind eta function, the prevertex parameter is the eta quotient
\begin{equation}
\label{eq:v-eta}
v(r) \;=\; \frac{27\,\eta(3\tau)^{12}}{\eta(\tau)^{12} + 27\,\eta(3\tau)^{12}},\qquad u(r) \;=\; 1 - v(r).
\end{equation}
\end{enumerate}
\end{theorem}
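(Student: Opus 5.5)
The plan is to make the Schwarz--Christoffel uniformization explicit and then compose it with the standard uniformization of the equilateral triangle. The parallelogram $P_r$ has interior angles $\pi/3,2\pi/3,\pi/3,2\pi/3$ at $0,r,r+e^{i\pi/3},e^{i\pi/3}$. Placing the prevertices at $0,u,1,\infty$, the map is
\[
f(z)=C\int_0^z w^{-2/3}(w-u)^{-1/3}(w-1)^{-2/3}\,dw .
\]
By Theorem~\ref{thm:r-ratio} and Lemma~\ref{lem:endpoint-r}, the ratio of side lengths $r=|f(1)-f(u)|/|f(u)-f(0)|$ is a strictly monotone real-analytic function of $u\in(0,1)$ mapping onto $(0,\infty)$. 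It is real-analytic because it is a ratio of two integrals that depend analytically on the parameter $u$. By the real-analytic inverse function theorem, once we check that $dr/du\neq0$ (which is where the monotonicity is used), $u(r)$ and $v(r)=1-u(r)$ are well defined and real-analytic. I will take these two earlier results as supplying exactly this.

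For (i), let $g(z)=B(\tfrac13,\tfrac13)^{-1}\int_0^z w^{-2/3}(1-w)^{-2/3}\,dw$. This is the Schwarz--Christoffel map of $\Hclos$ onto the unit equilateral triangle with $g(0)=0$, $g(1)=1$, $g(\infty)=e^{i\pi/3}$; the normalization $g(1)=1$ is precisely the definition of $B$. Then $g\circ f^{-1}$ is the normalized conformal map $P_r\to\Delta$. It sends the short side $[e^{i\pi/3},0]$ (prevertex arc $(\infty,0)$) onto the side $V_1V_2=[e^{i\pi/3},0]$. It sends the opposite short side (prevertex arc $(u,1)$) onto the segment $XV_3$, where $X=g(u)$ and $V_3=1$. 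After checking that this orientation matches the counterclockwise convention in the Carleson form, the crossing value is
\[
|XV_3|=1-g(u)=B^{-1}\int_u^1 w^{-2/3}(1-w)^{-2/3}\,dw .
\]
Substituting $w=1-s$ turns this into $B^{-1}\int_0^{v}s^{-2/3}(1-s)^{-2/3}\,ds$. Expanding $(1-s)^{-2/3}$ binomially and integrating term by term gives $3v^{1/3}{}_2F_1(\tfrac13,\tfrac23;\tfrac43;v)$. The closed form of $B(\tfrac13,\tfrac13)$ follows from the reflection formula $\Gamma(1/3)\Gamma(2/3)=2\pi/\sqrt3$.

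For (ii), I would first compute both side lengths as complete hypergeometric periods. On $(0,u)$, substitute $w=ut$; on $(u,1)$, substitute $w=u+(1-u)t$. Euler's integral representation then expresses each side length as a beta factor times ${}_2F_1(\tfrac13,\tfrac23;1;\cdot)$ evaluated at $u$ or $1-u$, possibly after a Pfaff or Euler transformation to bring the parameters into this form. The target identity is
\[
\frac{r}{\sqrt3}=\frac{{}_2F_1(\tfrac13,\tfrac23;1;1-x)}{{}_2F_1(\tfrac13,\tfrac23;1;x)}
\]
for $x=v$ or $x=u$, and this is where the choice $\tau=ir/\sqrt3$ enters. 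I then invoke Ramanujan's signature-$3$ inversion theorem in the Borweins' cubic form. For $\tau=\frac{i}{\sqrt3}\,{}_2F_1(\tfrac13,\tfrac23;1;1-x)/{}_2F_1(\tfrac13,\tfrac23;1;x)$ one has $x=c(q)^3/a(q)^3$ and $1-x=b(q)^3/a(q)^3$, with
\[
b(q)=\frac{\eta(\tau)^3}{\eta(3\tau)},\qquad c(q)=\frac{3\eta(3\tau)^3}{\eta(\tau)}.
\]
Hence $x/(1-x)=27\eta(3\tau)^{12}/\eta(\tau)^{12}$, which solves to the stated formula~\eqref{eq:v-eta}. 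As a sanity check on $x=v$ rather than $x=u$: as $r\to\infty$ we have $q\to0$, so $v\to0$, consistent with the crossing probability between the far-apart short sides tending to $0$.

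The main obstacle is the period computation. One must pin down the exact beta prefactors and the hypergeometric transformation that brings both side lengths to the parameters $(\tfrac13,\tfrac23;1)$. The two prefactors must cancel, leaving exactly the factor $\sqrt3$ and not some other constant, and the argument must come out as $v$ rather than $u$. I would try Pfaff's transformation ${}_2F_1(a,b;c;z)=(1-z)^{-a}{}_2F_1(a,c-b;c;z/(z-1))$ first. If the parameters do not match directly, I would instead verify the identity by comparing leading asymptotics as $u\to1$ and using that both sides solve the same hypergeometric ODE in $v$.
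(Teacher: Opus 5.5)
Your part (i) is the paper's argument: compose $\Phi_\Delta$ with $f_u^{-1}$, read off $1-\Phi_\Delta(u)$ from the Carleson form, then substitute $w=1-s$ and expand binomially.

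For part (ii) you take a genuinely different route. The paper obtains the eta quotient structurally, in three steps.
\begin{enumerate}
\item It computes the projective monodromy of the hypergeometric equation in $\tau=\frac{i}{\sqrt3}N/D$ (Proposition~\ref{prop:monodromy-group}).
\item It identifies this group with $\overline{\Gamma_0(3)}$ by a Schwarz-triangle and covolume comparison (Theorem~\ref{thm:monodromy-Gamma03}), and concludes that $u$ is a hauptmodul.
\item It fixes the M\"obius relation $u=t_3/(t_3+k)$ using the cusp values and the Fricke symmetry, which gives $k=27$ (Lemma~\ref{lem:u-v-explicit}).
\end{enumerate}
The cubic theta functions enter only afterwards, and there $v=c^3/a^3$ is \emph{derived} (Lemma~\ref{lem:u-v-theta}).

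You instead import two ingredients: the Borweins' cubic inversion theorem ($x=c^3/a^3$ whenever $\tau=\frac{i}{\sqrt3}G(1-x)/G(x)$ with $0<x<1$, where $G={}_2F_1(\tfrac13,\tfrac23;1;\cdot)$), and the eta-product formulas for $b$ and $c$. This makes (ii) a two-line corollary of Theorem~\ref{thm:r-ratio}, and it gives the real-analyticity of $u(r)$ for free.

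The price is threefold:
\begin{itemize}
\item All the modular content sits inside a cited black box. That box in turn follows from $a=G(c^3/a^3)$ together with the $W_3$-transformation laws of $a,b,c$.
\item The normalization $1/\sqrt3$ is inherited from the Borweins' nome rather than explained (compare Remark~\ref{rem:normalization}).
\item You do not obtain the generators $T,L$ of $\Gamma_0(3)$, the hauptmodul property, or $u\circ W_3=1-u$ on all of $\mathbb H$. The paper reuses all of these in \S\ref{sec:proof-main} and \S\ref{sec:rigidity}.
\end{itemize}

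Two slips need fixing.

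First, your displayed target $r/\sqrt3=G(1-x)/G(x)$ is off by a factor of $\sqrt3$ and contradicts your next sentence. Combined with $\tau=ir/\sqrt3$, it would give $\tau=i\,G(1-x)/G(x)$, to which the inversion theorem does not apply. The correct identity is $r=G(u)/G(1-u)=G(1-v)/G(v)$, which is exactly Theorem~\ref{thm:r-ratio}. The substitutions $w=ut$ and $w=u+(1-u)t$ (followed by $t\mapsto1-t$) give $N=B(\tfrac13,\tfrac23)\,G(u)$ and $D=B(\tfrac13,\tfrac23)\,G(1-u)$ directly from Euler's integral, with no Pfaff transformation, and the beta factors cancel to $1$. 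So the ``main obstacle'' you describe does not exist. The $\sqrt3$ lives entirely in the definition of $\tau$, and $x=v$ is forced by this identity rather than by a heuristic limit check.

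Second, strict monotonicity does not give $dr/du\ne0$ (consider $u\mapsto u^3$). Use instead that $N'D-ND'=W_0/\bigl(u(1-u)\bigr)$, where $W_0\ne0$ by Abel's formula and the linear independence of $N$ and $D$. Alternatively, read off the real-analyticity of $v(r)$ from the eta quotient once (ii) is established.
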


Part~(i) is proved in~\S\ref{sec:SC} by an explicit conformal map. Part~(ii) is the modular content of the theorem: it asserts that the geometric prevertex parameter, a priori defined only through a Schwarz--Christoffel integral, is a modular function of $\tau=ir/\sqrt3$ for the congruence group $\Gamma_0(3)$. It is proved in~\S\ref{sec:monodromy} (Lemma~\ref{lem:u-v-explicit}).

The second main result is an analogue of part~(1) of Theorem~\ref{thm:KZ}: a closed formula realizing $\Pi_h^{\mathrm{par}}$ as an integral of a modular form.

\begin{theorem}[Modular formula for the crossing probability]
\label{thm:main-almost-modular}
$\Pi_h^{\mathrm{par}}(r)$ is given by
\[
\Pi_h^{\mathrm{par}}(r)\;=\;\frac{4\pi^2}{\Gamma(1/3)^3}\int_r^{\infty}\eta\!\left(\tfrac{i}{\sqrt 3}\,t\right)^{\!2}\eta\!\left(i\sqrt 3\,t\right)^{\!2}\,dt.
\]
\end{theorem}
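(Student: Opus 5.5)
The plan is to differentiate the closed formula of Theorem~\ref{thm:cardy-parallelogram} in $r$ and recognize the derivative as a constant multiple of $\eta(\tau)^2\eta(3\tau)^2$ with $\tau=ir/\sqrt3$. The endpoint behaviour then supplies the constant of integration. Since $u=1-v$ and the integrand $w^{-2/3}(1-w)^{-2/3}$ is symmetric under $w\mapsto 1-w$, part~(i) gives
\[
\frac{d}{dr}\Pi_h^{\mathrm{par}}(r)=\frac{1}{B(\tfrac13,\tfrac13)}\,v^{-2/3}(1-v)^{-2/3}\,\frac{dv}{dr}=\frac{i}{\sqrt3\,B(\tfrac13,\tfrac13)}\,v^{-2/3}(1-v)^{-2/3}\,\frac{dv}{d\tau}.
\]
So everything reduces to computing $v^{-2/3}(1-v)^{-2/3}\,dv/d\tau$ from the eta quotient~\eqref{eq:v-eta}.

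Write $x=\eta(3\tau)^{12}/\eta(\tau)^{12}$, so that $v=27x/(1+27x)$ and $1-v=1/(1+27x)$. Then
\[
v^{-2/3}(1-v)^{-2/3}\,\frac{dv}{d\tau}=27^{1/3}\,x^{1/3}(1+27x)^{-2/3}\,\frac{x'}{x}.
\]
From the product formula for $\eta$, $\frac{1}{2\pi i}\,x'/x=\tfrac12\bigl(3E_2(3\tau)-E_2(\tau)\bigr)$. This is the holomorphic weight-$2$ Eisenstein series on $\Gamma_0(3)$, which I will identify with $a(\tau)^2$. Here $a$ is the Borwein theta series of the hexagonal lattice, of weight~$1$ on $\Gamma_0(3)$ with character. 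The second identity I need is the cubic Borwein relation $a^3=b^3+c^3$, where $b=\eta(\tau)^3/\eta(3\tau)$ and $c=3\eta(3\tau)^3/\eta(\tau)$. It gives
\[
\bigl(\eta(\tau)^{12}+27\eta(3\tau)^{12}\bigr)^{2/3}=a^2\,\eta(\tau)^2\eta(3\tau)^2 ,
\]
and hence $(1+27x)^{-2/3}=\eta(\tau)^{10}/(a^2\eta(3\tau)^2)$. Multiplying out, the factors $a^2$ cancel, and
\[
v^{-2/3}(1-v)^{-2/3}\,\frac{dv}{d\tau}=3\cdot 2\pi i\,\eta(\tau)^2\eta(3\tau)^2 .
\]
With $\tau=it/\sqrt3$ we have $3\tau=i\sqrt3\,t$, which gives the integrand of the theorem. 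Using $B(\tfrac13,\tfrac13)=\sqrt3\,\Gamma(1/3)^3/(2\pi)$, the derivative becomes $-\frac{4\pi^2}{\Gamma(1/3)^3}\,\eta(ir/\sqrt3)^2\eta(i\sqrt3 r)^2$. I will redo the constant bookkeeping carefully. The sign is correct, since $\Pi_h^{\mathrm{par}}$ is decreasing in $r$.

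To integrate, I use that as $r\to\infty$, $q=e^{-2\pi r/\sqrt3}\to0$. Then $x\sim q\to0$, so $v\to0$, and by part~(i) $\Pi_h^{\mathrm{par}}(r)\sim 3v^{1/3}/B\to0$. The integrand is $O(q^{2/3})$, so $\int_r^\infty$ converges, and the fundamental theorem of calculus gives the formula. This is consistent with Lemma~\ref{lem:endpoint-r}.

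The main obstacle is justifying the two modular identities, $\tfrac12(3E_2(3\tau)-E_2(\tau))=a^2$ and $a^3=b^3+c^3$ in eta form. I would prove both by the standard method. Each side is a holomorphic modular form of the same weight on $\Gamma_0(3)$, with the same character in the weight-$3$ case; I will check holomorphy at both cusps. Then it suffices to compare $q$-expansions up to the Sturm bound, which for weights $2$ and $3$ on $\Gamma_0(3)$ is a couple of coefficients. Alternatively, I could cite the Borweins' cubic theta identities directly.
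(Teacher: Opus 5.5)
Your argument is correct, and it reaches the key identity $d\widetilde\Pi_h^{\mathrm{par}}/d\tau=\frac{6\pi i}{B(\tfrac13,\tfrac13)}\,\eta(\tau)^2\eta(3\tau)^2$ by a genuinely different route.

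The paper never differentiates the eta quotient. It computes $dv/d\tau=2\pi i\,v(1-v)\,G(v)^2$ on the hypergeometric side, via Abel's formula for the Wronskian of $N$ and $D$, with the constant $W_0=B$ fixed by the logarithmic connection formula at $u=1$. It then invokes Borwein's hypergeometric identity $G(c^3/a^3)=a$, together with $v=c^3/a^3$, to rewrite $v^{1/3}(1-v)^{1/3}G(v)^2$ as $bc=3\,\eta(\tau)^2\eta(3\tau)^2$.

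You instead start from the explicit hauptmodul of Theorem~\ref{thm:cardy-parallelogram}(ii) and differentiate it logarithmically, so the hypergeometric function never enters. The Wronskian computation and the identity $G(v)=a$ are traded for the single Eisenstein identity $a^2=\tfrac12\bigl(3E_2(3\tau)-E_2(\tau)\bigr)$. That is a constant-term check, because $M_2(\Gamma_0(3))$ is one-dimensional. The cubic identity $a^3=b^3+c^3$ is used by both proofs. Comparing the two computations actually recovers $G(v)^2=a^2$ on the axis, so the two inputs are equivalent.

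Your route is shorter and purely modular once part~(ii) is available. The paper's route keeps the formula for $dv/d\tau$ independent of the explicit eta quotient. It also exhibits the integrand in the shape $v^{1/3}(1-v)^{1/3}F(v)^2$, which is what transfers directly to the rectangle case $n=2$ in \S\ref{sec:discussion}.

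Fix three small points in the write-up:
\begin{itemize}
\item From $\eta(\tau)^{12}+27\eta(3\tau)^{12}=a^3\eta(\tau)^3\eta(3\tau)^3$ one gets $(1+27x)^{-2/3}=\eta(\tau)^{6}/\bigl(a^2\eta(3\tau)^2\bigr)$, not $\eta(\tau)^{10}/\bigl(a^2\eta(3\tau)^2\bigr)$. With exponent $6$ the product is indeed $6\pi i\,\eta(\tau)^2\eta(3\tau)^2$, as you state.
\item The integrand decays like $q^{1/12+1/4}=q^{1/3}$, not $q^{2/3}$. The decay is still exponential in $t$, so convergence is unaffected.
\item Say explicitly that all fractional powers are taken on the imaginary axis. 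There $x$, $\eta(\tau)$, $\eta(3\tau)$ and $a(\tau)$ are positive, so the real positive roots are the correct branches.
\end{itemize}
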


The third result is an analogue of Theorem~\ref{thm:KZ}(2): a uniqueness statement showing that the Fricke symmetry, together with a $q$-block ansatz and a nondegeneracy condition, forces the Cardy--Smirnov function. Here and throughout, the \emph{cusp} $i\infty$ is the boundary point of $\mathbb H$ approached as $\Im\tau \to \infty$; a holomorphic function on $\mathbb H$ invariant under $\tau \mapsto \tau + 1$ has a $q$-expansion there, and we speak of its behavior ``at the cusp''.

\begin{theorem}[Uniqueness]
\label{thm:uniqueness}
Let $F:\mathbb H\to\mathbb C$ be holomorphic and suppose that
\begin{enumerate}
\item[\textup{(i)}] $F$ admits a convergent $q$-expansion at the cusp $i\infty$ of the form
\[
F(\tau)=\sum_{n=0}^{\infty} a_n\,q^{n+\alpha},\qquad a_0\ne 0,\ \alpha>0;
\]
\item[\textup{(ii)}] $F$ satisfies the functional equation $F\!\left(-\tfrac{1}{3\tau}\right)=1-F(\tau)$ for all $\tau\in\mathbb H$;
\item[\textup{(iii)}] the derivative $f := dF/d\tau$ is nowhere vanishing on $\mathbb H$.
\end{enumerate}
Then $\alpha=1/3$ and $F(\tau) = \Pi_h^{\mathrm{par}}(-i\sqrt 3 \,\tau)$, the holomorphic extension of the Cardy--Smirnov function to $\mathbb H$ constructed in Theorem~\ref{thm:modular-derivative}\textup{(i)}.
\end{theorem}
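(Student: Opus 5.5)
The plan is to follow Kleban--Zagier: differentiate $F$, divide by the weight-$2$ form $\varphi(\tau):=\eta(\tau)^2\eta(3\tau)^2$ that appears in Theorem~\ref{thm:main-almost-modular}, and show that the quotient is constant. I will use the argument principle on a fundamental domain for the Fricke extension $\Gamma_0(3)^+=\langle T, W_3\rangle$, where $T\tau=\tau+1$ and $W_3\tau=-1/(3\tau)$.

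\emph{Step 1: transformation laws.} Differentiating (ii) gives $f(-1/(3\tau))\cdot\frac{1}{3\tau^2}=-f(\tau)$. By (i), $f(\tau)=2\pi i\sum_n (n+\alpha)a_n q^{n+\alpha}$, so $f(\tau+1)=e^{2\pi i\alpha}f(\tau)$. The leading coefficient $2\pi i\alpha a_0$ is nonzero because $\alpha>0$ and $a_0\neq 0$. For $\varphi$, the inversion law $\eta(-1/\tau)=\sqrt{\tau/i}\,\eta(\tau)$ applied twice gives $\varphi(-1/(3\tau))=-3\tau^2\varphi(\tau)$. This is the same sign as $f$. Also $\varphi(\tau+1)=e^{2\pi i/3}\varphi(\tau)$, since $\varphi=q^{1/3}(1+O(q))$ and $\varphi$ has no zeros on $\mathbb H$.

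Hence $g:=f/\varphi$ is holomorphic and nowhere vanishing on $\mathbb H$, the latter by (iii). It satisfies
\[
g(-1/(3\tau))=g(\tau),\qquad g(\tau+1)=e^{2\pi i(\alpha-1/3)}g(\tau),
\]
and it has the expansion $g=c_0q^{\alpha-1/3}(1+O(q))$ with $c_0\neq0$.

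\emph{Step 2: forcing $\alpha=1/3$ (the main obstacle).} Take the standard fundamental domain for $\Gamma_0(3)^+$: $|\Re\tau|\le 1/2$ and $|\tau|\ge 1/\sqrt3$, truncated at $\Im\tau=Y$. I will integrate $g'/g$ around its boundary. Since $g$ has no zeros, the integral is $0$.

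The two vertical sides cancel, because $g'/g$ is $T$-invariant: the multiplier is a constant and disappears on taking the logarithmic derivative. On the arc $|\tau|=1/\sqrt3$, the map $W_3$ sends the arc to itself with reversed orientation (it is $\tau\mapsto-\bar\tau$ there). Combined with $g\circ W_3=g$, the pullback of $d\log g$ cancels the two halves of the arc. The only remaining piece is the top segment, which contributes $-2\pi i(\alpha-1/3)$ from the leading term of the expansion, plus an error that tends to $0$ as $Y\to\infty$.

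There is one technical point I expect to need care: the corners $e^{\pm i\pi/3}/\sqrt3$, $\pm\tfrac12+\tfrac{i}{2\sqrt3}$ and the elliptic point $i/\sqrt3$ lie on the contour. Since $g$ is nonvanishing, small indentations around these points contribute nothing in the limit. Granting this, $\alpha=1/3$.

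\emph{Step 3: identification.} With $\alpha=1/3$, $g$ is invariant under $T$ and $W_3$, holomorphic on $\mathbb H$, and bounded at the single cusp, where it tends to $c_0$. So $g$ descends to a holomorphic function on the compact curve $X_0(3)^+$, and is therefore a constant $c$.

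Thus $F'=c\varphi$, and $F(\tau)=C-c\int_\tau^{i\infty}\varphi$. The expansion (i) with $\alpha>0$ forces $F\to0$ at the cusp, so $C=0$. Evaluating (ii) at the fixed point $\tau=i/\sqrt3$ of $W_3$ gives $F(i/\sqrt3)=1/2$, which pins down $c$.

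The function $\Pi_h^{\mathrm{par}}(-i\sqrt3\,\tau)$ has exactly this form, with $\alpha=1/3$, by Theorem~\ref{thm:main-almost-modular} and its holomorphic extension in Theorem~\ref{thm:modular-derivative}(i). Since $\eta(\tau)^2\eta(3\tau)^2$ is strictly positive on the imaginary axis, $\int_{i/\sqrt3}^{i\infty}\varphi\neq0$, so the normalization $F(i/\sqrt3)=1/2$ determines $c$ uniquely. The two functions therefore have the same constant and coincide.
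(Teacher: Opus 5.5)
Your proof is correct, and it takes a genuinely different route from the paper's. The paper never divides by a known form. It first shows $e^{12\pi i\alpha}=1$ from the finite-order element $U=W_3T$ (using $U^6=-27I$), so that $f^6\in S_{12}(\Gamma_0^+(3))$. The valence formula (total $2$) together with nonvanishing then gives $\mathrm{ord}_\infty f^6=2$, hence $\alpha=\tfrac13$. Next, $\dim S_{12}(\Gamma_0^+(3))=2$ and the explicit basis $\{\Delta_1^+,\Delta_3^+\}$ identify $f^6$ as a multiple of $\Delta_3^+$, and finally a sixth root is taken.

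You instead divide by the nowhere-vanishing $\varphi=\eta^2\eta(3\cdot)^2$, whose Fricke sign matches that of $f$ (Theorem~\ref{thm:modular-derivative}(ii)). This reduces everything to a weight-$0$ unit $g$ with $g\circ W_3=g$ and a character under $T$. Cauchy's theorem for $g'/g$ on the truncated domain then acts as a weight-$0$ valence formula and gives $\alpha=\tfrac13$ directly, for arbitrary real $\alpha>0$. You need no finite-order relation, no Riemann--Roch dimension count, no basis of $S_{12}$, and no root extraction. Your normalization at the fixed point $i/\sqrt3$, using positivity of $\varphi$ on the imaginary axis, cleanly replaces the paper's comparison of the constants $F(W_3\tau)+F(\tau)$.

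What your route gives up is explanatory power. It presupposes the answer $\varphi$. The paper's structural argument shows exactly where the level-$3$ deficiency lies and produces the spurious $\alpha=\tfrac16$ solution of Remark~\ref{rem:ghost}; in your language, that is the case where $g$ vanishes at $\tau_6$.

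A few small corrections:
\begin{itemize}
\item The ``technical point'' about indentations is vacuous. $g'/g$ is holomorphic on a neighborhood of the whole compact contour, whose lowest points lie at height $\tfrac{1}{2\sqrt3}>0$, so Cauchy's theorem applies as is.
\item The lower corners are $\pm\tfrac12+\tfrac{i}{2\sqrt3}=e^{i\pi/6}/\sqrt3,\ e^{5i\pi/6}/\sqrt3$, not $e^{\pm i\pi/3}/\sqrt3$.
\item Hypothesis (i) yields $f(\tau+1)=e^{2\pi i\alpha}f(\tau)$ only where the expansion converges. You must extend it to all of $\mathbb H$ by the identity theorem before integrating along the full vertical sides.
\item The exact periodicity $\varphi(\tau+1)=e^{2\pi i/3}\varphi(\tau)$ comes from the product expansion, not from $\varphi$ having no zeros.
\item You should cite $\widetilde\Pi_h^{\mathrm{par}}(i/\sqrt3)=\Pi_h^{\mathrm{par}}(1)=\tfrac12$ from Corollary~\ref{cor:fricke-Pi}.
\item Step 3 uses that $T$ and $W_3$ generate $\Gamma_0^+(3)$ modulo $\pm I$ and that this group has a single cusp. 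Both facts are established in the paper.
\end{itemize}
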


Hypothesis~(iii) is a nondegeneracy condition: geometrically it says that $F$ is locally injective, i.e.\ it extends a conformal map. It holds automatically for the Cardy--Smirnov function, whose $\tau$-derivative is the nowhere-vanishing weight-$2$ form $c_\eta\,\eta(\tau)^2\eta(3\tau)^2$ (Theorem~\ref{thm:modular-derivative}). The condition is genuinely necessary here, in contrast to the rectangular setting of Kleban--Zagier: we explain in Remark~\ref{rem:ghost} below why dropping it admits a second, spurious solution.

\subsection{Strategy and structure of the paper}

Our proof of Theorem~\ref{thm:main-almost-modular} proceeds in three layers.

\emph{Conformal layer.} Section~\ref{sec:SC} uses the Schwarz--Christoffel map from the upper half-plane to $P_r$, composed with the normalized map to the equilateral triangle, to express $\Pi_h^{\mathrm{par}}(r)$ as an incomplete beta integral in a single prevertex parameter $u\in(0,1)$. Combined with the Carleson form of Cardy's formula on the triangle, this gives an explicit hypergeometric expression for $\Pi_h^{\mathrm{par}}(r)$.

\emph{Hypergeometric layer.} Section~\ref{sec:monodromy} shows that the horizontal side length $r=r(u)$ is a ratio of two solutions of a Gauss hypergeometric equation, and computes the projective monodromy of that equation explicitly in the coordinate $\tau=ir/\sqrt3$: it is generated by $\tau\mapsto\tau+1$ and $\tau\mapsto\tau/(1-3\tau)$, and equals $\Gamma_0(3)$. The prevertex $u$ is thereby exhibited as a hauptmodul for $\Gamma_0(3)$, and the symmetry $u\mapsto1-u$ of the hypergeometric equation is exhibited as the Fricke involution $\tau\mapsto-1/(3\tau)$.

\emph{Modular layer.} Section~\ref{sec:proof-main} uses the cubic theta functions of Borwein and Borwein~\cite{Borwein} to identify the $\tau$-derivative of $\Pi_h^{\mathrm{par}}$ as the weight-$2$ cusp form $\eta(\tau)^2\eta(3\tau)^2$, and integrates to obtain Theorem~\ref{thm:main-almost-modular}. Section~\ref{sec:rigidity} proves Theorem~\ref{thm:uniqueness} by a four-step modular argument on $\Gamma_0^+(3)$. Section~\ref{sec:discussion} discusses obstructions to extending the argument to other angles $\pi/n$. Appendix~\ref{app:SC} recalls the Schwarz--Christoffel derivative formula, and Appendix~\ref{app:endpoint} proves that $r(u)$ is a real-analytic increasing bijection $(0,1)\to(0,\infty)$.

\section{The Schwarz--Christoffel construction and a conformal formula for \texorpdfstring{$\Pi_h^{\mathrm{par}}$}{Pi\_h\^{}par}}
\label{sec:SC}

\subsection{Strategy}
Let $\Delta$ denote the closed equilateral triangle with vertices $0$, $1$, $e^{i\pi/3}$. We construct a conformal map $g : P_r \to \Delta$ from the parallelogram onto the triangle, sending the four marked vertices of $P_r$ to four marked boundary points of $\Delta$, and then apply the Carleson form of Cardy's formula on $\Delta$ to read off $\Pi_h^{\mathrm{par}}(r)$ as a Euclidean length. The map $g$ is built by composing two Schwarz--Christoffel maps through the upper half-plane:
\[
g \;=\; \Phi_\Delta \circ f_u^{-1} \;:\; P_r \;\xrightarrow{\;f_u^{-1}\;}\;\mathbb H\;\xrightarrow{\;\Phi_\Delta\;}\;\Delta,
\]
where $f_u : \mathbb H \to P_r$ is the Schwarz--Christoffel uniformization of the parallelogram and $\Phi_\Delta : \mathbb H \to \Delta$ is the analogous uniformization of the triangle. The composite is illustrated in Figure~\ref{fig:crossing_map}; explicit formulas are derived in~\S\ref{subsec:two-SC-maps} and~\S\ref{subsec:hypergeometric}.

\begin{figure}[htbp]
    \centering
    \resizebox{0.98\textwidth}{!}{%
    \begin{tikzpicture}[>=latex, scale=1.0]
    \path[use as bounding box] (-0.5,-1.6) rectangle (12.5,3.0);
    \tikzstyle{crossing} = [orange, thick, smooth, tension=0.7]

    \begin{scope}[shift={(0,0)}]
        \coordinate (O) at (0,0);
        \coordinate (Lc) at (2.5,0);
        \coordinate (Topr) at (1,1.732);
        \coordinate (TopR) at (3.5,1.732);
        \fill[gray!5] (O)--(Lc)--(TopR)--(Topr)--cycle;
        \draw[red, very thick] (O)--(Lc);
        \draw[blue, very thick] (O)--(Topr);
        \draw[green!60!black, very thick] (Lc)--(TopR);
        \draw[black, very thick] (Topr)--(TopR);
        \draw[crossing] (0.5,0.866) to[out=20,in=160] (1.5,1.2) to[out=-20,in=180] (3.0,1.0);
        \node[below left] at (O) {$0$};
        \node[below] at (Lc) {$r$};
        \node at (1.75,2.2) {\textbf{Parallelogram } $P_r$};
    \end{scope}

    \begin{scope}[shift={(5.0,0)}]
        \fill[gray!5] (-0.4,0) rectangle (3.4,1.9);
        \draw[blue, very thick] (-0.4,0)--(0,0);
        \draw[red, very thick] (0,0)--(1.5,0);
        \draw[green!60!black, very thick] (1.5,0)--(2.5,0);
        \draw[black, very thick] (2.5,0)--(3.4,0);
        \draw[crossing] (-0.25,0.25) to[out=70,in=180] (1.0,1.1) to[out=0,in=140] (2.05,0.15);
        \node[below] at (0,0) {$0$};
        \node[below] at (1.5,0) {$u$};
        \node[below] at (2.5,0) {$1$};
        \node[below right] at (3.4,0) {$\infty$};
        \node at (1.5,2.2) {\textbf{Upper half-plane } $\Hclos$};
    \end{scope}

    \begin{scope}[shift={(10,0)}]
        \coordinate (t0) at (0,0);
        \coordinate (tg) at (1.5,0);
        \coordinate (t1) at (2.5,0);
        \coordinate (top) at (1.25,2.165);
        \fill[gray!5] (t0)--(t1)--(top)--cycle;
        \draw[blue, very thick] (t0)--(top);
        \draw[red, very thick] (t0)--(tg);
        \draw[green!60!black, very thick] (tg)--(t1);
        \draw[black, very thick] (t1)--(top);
        \draw[crossing] (0.625,1.08) .. controls (1.0,1.0) and (1.2,0.5) .. (1.8,0);
        \node[below left] at (t0) {$0$};
        \node[below] at (tg) {$g(r)$};
        \node[below right] at (t1) {$1$};
        \node at (1.25,2.5) {\textbf{Triangle } $\Delta$};
    \end{scope}

    \draw[->, thick] (3.65,1.08) -- node[midway, above] {$f_u^{-1}$} (4.5,1.08);
    \draw[->, thick] (8.7,1.08) -- node[midway, above] {$\Phi_\Delta$} (9.9,1.08);

    \draw[->, thick, dashed] (1.75,-0.6) to[bend right=12] node[midway, below] {$g \;=\; \Phi_\Delta \circ f_u^{-1}$} (11.25,-0.6);
    \end{tikzpicture}
    }
    \caption{The composite conformal map $g = \Phi_\Delta \circ f_u^{-1} : P_r \to \Hclos \to \Delta$. The four boundary sides are color-coded consistently across the three domains. A left--right crossing in $P_r$ (orange) corresponds to a path in $\Delta$ joining the side $[0,e^{i\pi/3}]$ to the segment $[g(r), 1]$; by Theorem~\ref{thm:cardy-carleson}, $\Pi_h^{\mathrm{par}}(r) = 1 - g(r)$.}
    \label{fig:crossing_map}
\end{figure}

\subsection{The triangular domain}

We use $\Delta$ to denote the equilateral triangle with vertices $0$, $1$, $e^{i\pi/3}$, and recall the Carleson form of Cardy's formula, which will be the input to all our crossing calculations.

\begin{theorem}[Smirnov~\cite{Smirnov}; Carleson form]
\label{thm:cardy-carleson}
Let $\Delta ABC$ be an equilateral triangle of side length $1$ and let $X$ be a point on the side $BC$. The Cardy--Smirnov function for the crossing probability from the side $AB$ to the boundary segment $XC\subset BC$ equals the Euclidean length of $XC$. (That is, in critical site percolation on the triangular lattice, the probability of an open crossing from $AB$ to $XC$ converges as the mesh tends to zero to the length $|XC|$.) The triangular form of the statement, reformulating Cardy's rectangle formula~\eqref{eq:cardy-original} as a Euclidean length, is due to L.\ Carleson (unpublished; see~\cite{Smirnov}).
\end{theorem}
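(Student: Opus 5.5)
This statement is Smirnov's theorem, which the paper takes as input. The plan below follows Smirnov's argument, phrased for the normalized triangle $\Delta$; the paper itself only needs the conclusion.

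\emph{Setup.} For a point $z$ in the interior of $\Delta$, let $H^A_a(z)$ be the probability that an open simple path separates $z$ from the side opposite the vertex $A$. The path must join the two sides adjacent to that opposite side. Define $H^B_a$ and $H^C_a$ cyclically. The crossing event in the statement is the event for $H^A$ at $z=X$, once $z$ is pushed to the boundary point $X$ on $BC$. So it suffices to identify the scaling limits of these three functions.

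\emph{Compactness and boundary values.} By Russo--Seymour--Welsh estimates, the functions $H^\bullet_a$ are uniformly H\"older continuous, with constants independent of the mesh. Arzel\`a--Ascoli then gives subsequential limits $h^A,h^B,h^C$ that are continuous on $\Delta$. On the boundary these limits are explicit: $h^A$ equals $1$ at $A$, vanishes on $BC$, and is a function of position on each of the other two sides.

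\emph{Key combinatorial input.} Using exploration paths and the colour-switching trick, which is specific to $p=1/2$ on the triangular lattice, one shows that discrete derivatives of $H^A,H^B,H^C$ in the three lattice directions are related by rotation by $\tau_0=e^{2\pi i/3}$. In the limit this yields $\partial_\eta h^A=\partial_{\tau_0\eta}h^B$ for every direction $\eta$, and cyclically. I expect this step to be the main obstacle: it requires matching, with errors $o(1)$ uniformly in the mesh, the probabilities of three disjoint-arm events rooted at neighbouring sites.

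\emph{Conclusion.} The relations of the previous step imply that $h^A+\tau_0 h^B+\tau_0^2 h^C$ and $h^A+h^B+h^C$ are holomorphic and harmonic. The boundary values force the latter to be constant, equal to $1$. They also force the former to map $\Delta$ onto a triangle with the three vertices fixed. A holomorphic map of $\Delta$ onto itself that fixes the three vertices is the identity, so each $h^\bullet$ is an affine function determined by its vertex values. Evaluating at $X\in BC$ gives $h^A(X)=|XC|$, independently of the subsequence. This proves convergence and the stated formula.
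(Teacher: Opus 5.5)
The paper gives no proof of this statement; it imports it from Smirnov's paper. Your outline is Smirnov's argument, and its architecture is right: RSW compactness, colour switching, the combinations $h^A+h^B+h^C$ and $h^A+\tau_0h^B+\tau_0^2h^C$, and conformal rigidity of the triangle.

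\textbf{The identification step is wrong.} The step linking the observables to the crossing event fails, and it makes your last line false. With your definition, $H^A(z)$ is the probability of an open path from $AB$ to $AC$ separating $z$ from $BC$. For $z=X\in BC$ no such path exists. You yourself record that $h^A$ vanishes on $BC$, so your final formula $h^A(X)=|XC|$ contradicts your own boundary conditions. The event in the statement is measured by the observable of the vertex $B$, which is shared by the two sides $AB$ and $BC$ it involves. A simple open path from $BA$ to $BC$ cuts off a region containing $B$, so it separates $X\in BC$ from $CA$ exactly when it lands on $BC$ beyond $X$. Thus $H^B(X)$ is the probability of an open crossing from $AB$ to $XC$. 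The conclusion should be $h^B(X)=|XC|$, which holds because $h^B$ is the affine function equal to $1$ at $B$ and to $0$ on $CA$.

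\textbf{Two compressed steps in your conclusion each need a sentence.}

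First, constancy of the sum. The boundary data you actually have is the value $1$ at the vertex and $0$ on the opposite side. On the adjacent sides the values are exactly the unknown crossing probabilities, not ``explicit''. This does not give $h^A+h^B+h^C=1$ on $\partial\Delta$, so harmonicity alone does not make the sum constant. You would need to add the duality/colour-symmetry identity $h^A+h^B=1$ on $AB$ and its cyclic versions. More directly, your relations say $\nabla h^B=\tau_0\nabla h^A$ and $\nabla h^C=\tau_0^2\nabla h^A$. Hence $\nabla(h^A+h^B+h^C)=(1+\tau_0+\tau_0^2)\nabla h^A=0$, and evaluating at a vertex shows the constant is $1$.

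Second, bijectivity of $H$. The function $H=h^A+\tau_0h^B+\tau_0^2h^C$ takes values in the triangle with vertices $1,\tau_0,\tau_0^2$, not in $\Delta$. A proper holomorphic map between triangles that matches vertices can have degree larger than one, so ``onto a triangle with the vertices fixed'' is not yet enough. You need three further steps:
\begin{itemize}
\item use $h^\bullet\ge0$ and $h^A+h^B+h^C\equiv1$ to see that each side of $\Delta$ goes into the corresponding side;
\item apply the argument principle to get degree one;
\item then apply three-point rigidity after an affine normalization.
\end{itemize}
Each $h^\bullet$ is then recovered from $H$ and the constant sum by taking real parts, so it is affine.
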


We use the principal branch of the complex power $z \mapsto z^{\alpha}$ on $\mathbb C \setminus (-\infty, 0]$ throughout, with $z^\alpha := \exp(\alpha \operatorname{Log} z)$ for $\operatorname{Log}$ the principal logarithm; in particular, for $u \in (0,1)$ the quantities $u^{-2/3}$ and $(1-u)^{-2/3}$ are real and positive. For factors such as $(w-u)^{-1/3}$ and $(w-1)^{-2/3}$ that have branch cuts crossing the real interval of integration, we evaluate them at real $w$ as boundary values from the upper half-plane: $(w-u)^{-1/3} := \lim_{\epsilon \downarrow 0} (w + i\epsilon - u)^{-1/3}$, and similarly for $(w-1)^{-2/3}$.

\begin{definition}[Triangle uniformization]
\label{def:PhiDelta}
Define $\Phi_\Delta:\Hclos\to\mathbb C$ by
\[
\Phi_\Delta(z):=\frac{\int_0^z w^{-2/3}(1-w)^{-2/3}\,dw}{\int_0^1 w^{-2/3}(1-w)^{-2/3}\,dw}.
\]
\end{definition}

By the classical theory of Schwarz--Christoffel maps (see Appendix~\ref{app:SC} for the derivation), $\Phi_\Delta$ extends to a conformal bijection $\mathbb H \to \Delta$ sending the prevertices $0, 1, \infty$ to the vertices $0, 1, e^{i\pi/3}$ of $\Delta$ respectively, with $\Phi_\Delta(0) = 0$ and $\Phi_\Delta(1) = 1$ by construction.

\subsection{The parallelogram domain}
\label{subsec:two-SC-maps}

The $\pi/3$ parallelogram $P_r$ has interior angles $(\pi/3, 2\pi/3, \pi/3, 2\pi/3)$ at its four vertices, taken in cyclic order. By the Schwarz--Christoffel theory (Appendix~\ref{app:SC}), a conformal bijection $\Hclos \to P_r$ sending four prevertices $(0, u, 1, \infty) \in \mathbb R \cup \{\infty\}$ to the four vertices of $P_r$ in cyclic order has the form
\begin{equation}
\label{eq:fu-def}
f_u(z) \;=\; C_{\mathrm{SC}}(u) \int_0^z w^{-2/3}(w-u)^{-1/3}(w-1)^{-2/3}\,dw \qquad (z \in \Hclos),
\end{equation}
for a normalizing constant $C_{\mathrm{SC}}(u) \in \mathbb C^\times$.

The parameter $u \in (0,1)$ is not free: the four-marked parallelogram $P_r$ is determined (up to conformal equivalence) by its conformal modulus, which translates into a unique value of $u$ for each side length $r$. We make this precise. For each $u \in (0,1)$, formula~\eqref{eq:fu-def} produces \emph{some} $\pi/3$ parallelogram; varying $u$ over $(0,1)$ traces out (after rescaling) the one-parameter family of conformal equivalence classes of $\pi/3$ parallelograms, parametrized by their horizontal side length. We will show in Theorem~\ref{thm:r-ratio} and Lemma~\ref{lem:endpoint-r} that the resulting side length is a strictly increasing function $r = r(u)$, whose inverse $u = u(r) \in (0,1)$ is the unique prevertex parameter producing the parallelogram with side length $r$. Throughout the rest of this section, we work with arbitrary $u \in (0,1)$ and write $r = r(u)$; at the end we invert.

\begin{definition}[Maps used in the construction]
\label{def:map-dictionary}
For $u \in (0,1)$, let $f_u : \Hclos \to P_{r(u)}$ be the Schwarz--Christoffel map~\eqref{eq:fu-def} sending the prevertices $(0, u, 1, \infty)$ to the vertices $(0, r(u), r(u) + e^{i\pi/3}, e^{i\pi/3})$ respectively. Write $\phi_1 := f_u^{-1} : P_{r(u)} \to \Hclos$, and define the composite
\[
g := \Phi_\Delta \circ \phi_1 \;:\; P_{r(u)} \longrightarrow \Delta.
\]
\end{definition}

The logical content of the construction is summarized by the diagram
\[
\begin{array}{ccccc}
P_{r(u)} & \xrightarrow{\;\phi_1\;} & \Hclos & \xrightarrow{\;\Phi_\Delta\;} & \Delta. \\[4pt]
\multicolumn{5}{c}{\xrightarrow[\qquad g\;=\;\Phi_\Delta\,\circ\,\phi_1\qquad]{}}
\end{array}
\]

\subsection{The side length $r$ as a ratio of period integrals}
\label{subsec:hypergeometric}

The side length $r$ and the crossing probability $\Pi_h^{\mathrm{par}}(r)$ are both expressed through the following two integrals, which we call \emph{period integrals} because they are integrals of a multi-valued algebraic differential form over paths between branch points --- precisely the type of integral that arises as a period of an algebraic curve.

\begin{definition}[Period integrals]
\label{def:AB}
For $u\in(0,1)$, set
\begin{align*}
N(u) &:= \int_0^u w^{-2/3}(u-w)^{-1/3}(1-w)^{-2/3}\,dw, \\
D(u) &:= \int_u^1 w^{-2/3}(w-u)^{-1/3}(1-w)^{-2/3}\,dw.
\end{align*}
\end{definition}
\begin{theorem}[Hypergeometric formula for $r(u)$]
\label{thm:r-ratio}
For each $u \in (0,1)$, the parallelogram $P_{r(u)}$ produced by~\eqref{eq:fu-def} has horizontal side length
\[
r(u)\;=\;\frac{N(u)}{D(u)}\;=\;\frac{{}_2F_1(\tfrac13,\tfrac23;1;u)}{{}_2F_1(\tfrac13,\tfrac23;1;1-u)}.
\]
That is, $r$ is a ratio of two solutions of the Gauss hypergeometric equation
\begin{equation}
\label{eq:hgeo-ODE}
u(1-u)\,F''(u)+(1-2u)\,F'(u)-\tfrac29\,F(u)\;=\;0
\end{equation}
defined on $(0,1)$.
\end{theorem}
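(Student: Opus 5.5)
We compute the side vectors of $f_u(\Hclos)$ directly from the Schwarz--Christoffel integral~\eqref{eq:fu-def}, and then identify the resulting Euler-type integrals as hypergeometric functions.

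\emph{Step 1: the two side vectors.} On the real segment $(0,u)$ the integrand is $w^{-2/3}(w-u)^{-1/3}(w-1)^{-2/3}$. With the boundary-value conventions fixed above, $(w-u)^{-1/3}=e^{-i\pi/3}(u-w)^{-1/3}$ and $(w-1)^{-2/3}=e^{-2i\pi/3}(1-w)^{-2/3}$. Hence
\[
f_u(u)-f_u(0)=C_{\mathrm{SC}}(u)\,e^{-i\pi}N(u).
\]
On $(u,1)$ only the factor $(w-1)^{-2/3}$ carries a phase, so
\[
f_u(1)-f_u(u)=C_{\mathrm{SC}}(u)\,e^{-2i\pi/3}D(u).
\]
The two arguments differ by $\pi/3$, consistent with the turning angle at the vertex $f_u(u)$. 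The normalization of Definition~\ref{def:map-dictionary} sends $0\mapsto 0$ and $u\mapsto r$, and the side $f_u(u)f_u(1)$ equals $e^{i\pi/3}$, of length $1$. Since both $N$ and $D$ are positive, taking moduli gives
\[
r=\frac{|C_{\mathrm{SC}}|\,N(u)}{|C_{\mathrm{SC}}|\,D(u)}=\frac{N(u)}{D(u)}.
\]
The constant is then fixed as $C_{\mathrm{SC}}=-1/N\cdot r$, up to checking the direction. That the image is indeed a parallelogram with these sides, rather than merely a quadrilateral with the right angles, follows from Appendix~\ref{app:SC}. The opposite sides are automatically parallel, and the closing of the polygon forces the opposite sides to have equal lengths.

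\emph{Step 2: hypergeometric identification.} In $N(u)$ substitute $w=ut$. This gives
\[
N(u)=\int_0^1 t^{-2/3}(1-t)^{-1/3}(1-ut)^{-2/3}\,dt.
\]
By Euler's integral ${}_2F_1(a,b;c;z)=\frac{1}{B(b,c-b)}\int_0^1 t^{b-1}(1-t)^{c-b-1}(1-zt)^{-a}\,dt$ with $b=1/3$, $c=1$, $a=2/3$, we get
\[
N(u)=B(\tfrac13,\tfrac23)\,{}_2F_1(\tfrac23,\tfrac13;1;u).
\]
In $D(u)$ substitute $w=1-(1-u)t$, so that $1-w=(1-u)t$, $w-u=(1-u)(1-t)$ and $w=1-(1-u)t$. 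The powers of $(1-u)$ combine to $(1-u)^{-2/3-1/3+1}=1$, leaving
\[
D(u)=\int_0^1 t^{-2/3}(1-t)^{-1/3}(1-(1-u)t)^{-2/3}\,dt=B(\tfrac13,\tfrac23)\,{}_2F_1(\tfrac23,\tfrac13;1;1-u).
\]
The beta constants cancel in the ratio, and the symmetry of ${}_2F_1$ in its first two arguments gives the stated formula.

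\emph{Step 3: the ODE.} The Gauss equation $z(1-z)F''+[c-(a+b+1)z]F'-abF=0$ with $a=1/3$, $b=2/3$, $c=1$ becomes~\eqref{eq:hgeo-ODE}, since $a+b+1=2$ and $ab=2/9$. Here $c=a+b+1-c$, so the substitution $u\mapsto 1-u$ preserves the equation. Hence ${}_2F_1(\tfrac13,\tfrac23;1;1-u)$ is a second solution on $(0,1)$, and it is independent of the first: one is logarithmic at $u=1$, the other analytic there.

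\emph{Main obstacle.} The only delicate point is the branch bookkeeping in Step 1: confirming that the phases come out as stated, and that $C_{\mathrm{SC}}$ can be chosen so the vertices land at $0,r,r+e^{i\pi/3},e^{i\pi/3}$. I would settle this by checking the total turning, which should be arguments $0,\pi/3,\pi,4\pi/3$ for the successive sides. I would also verify convergence at $\infty$, where the integrand decays like $|w|^{-5/3}$, so the fourth vertex is finite.
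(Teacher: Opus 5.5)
Your proposal is correct and follows essentially the same route as the paper: the same boundary-value phases ($e^{-i\pi/3}e^{-2i\pi/3}=-1$ on $(0,u)$ and $e^{-2i\pi/3}$ on $(u,1)$) give $r=N/D$, and the substitutions $w=ut$ and $w=1-(1-u)t$ (the paper's $w=u+(1-u)t$ up to $t\mapsto1-t$) reduce $N$ and $D$ to Euler integrals in which the factor $B(\tfrac13,\tfrac23)$ cancels. The differences are cosmetic: you extract $r$ by taking moduli rather than by solving for $C_{\mathrm{SC}}$, and you obtain linear independence from the logarithmic singularity of ${}_2F_1(\tfrac13,\tfrac23;1;u)$ at $u=1$, together with the explicit check $c=a+b+1-c$ for the $u\mapsto1-u$ symmetry, whereas the paper uses the nonconstancy of $r$ from Lemma~\ref{lem:endpoint-r}.
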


\begin{proof}
For $w \in (0, u)$, taking boundary values from $\mathbb H$ as in~\S\ref{subsec:two-SC-maps}, the principal-branch boundary values satisfy
\[
\lim_{\epsilon\downarrow 0}(w + i\epsilon - u)^{-1/3} \;=\; e^{-\pi i/3}\,(u-w)^{-1/3}, \qquad \lim_{\epsilon\downarrow 0}(w + i\epsilon - 1)^{-2/3} \;=\; e^{-2\pi i/3}\,(1-w)^{-2/3},
\]
while for $w \in (u, 1)$ only the factor $(w-1)^{-2/3}$ contributes a phase $e^{-2\pi i/3}$. Therefore
\[
f_u(u) - f_u(0) \;=\; C_{\mathrm{SC}}(u)\,e^{-\pi i/3}\,e^{-2\pi i/3}\,N(u) \;=\; -C_{\mathrm{SC}}(u)\,N(u),
\]
and
\[
f_u(1) - f_u(u) \;=\; C_{\mathrm{SC}}(u)\,e^{-2\pi i/3}\,D(u).
\]
The conditions $f_u(u) = r$ and $f_u(1) - f_u(u) = e^{i\pi/3}$ then give $C_{\mathrm{SC}}(u) = -r/N(u) = -1/D(u)$, and hence $r(u) = N(u)/D(u)$.

To identify $N$ and $D$ with hypergeometric functions, substitute $w = u t$ in $N$:
\[
N(u) \;=\; u^{1 - 2/3 - 1/3}\,\int_0^1 t^{-2/3}(1-t)^{-1/3}(1 - u t)^{-2/3}\,dt \;=\; \int_0^1 t^{-2/3}(1-t)^{-1/3}(1 - u t)^{-2/3}\,dt.
\]
By Euler's integral representation $_2F_1(a, b; c; z) = \tfrac{\Gamma(c)}{\Gamma(b)\Gamma(c-b)}\int_0^1 t^{b-1}(1-t)^{c-b-1}(1-zt)^{-a}\,dt$ (valid for $\Re c > \Re b > 0$ and $|z| < 1$, with appropriate branch conventions at $z = 1$), the integrand corresponds to parameters $b = 1/3$, $c - b = 2/3$, $a = 2/3$, so $c = 1$, giving
\[
N(u) \;=\; B(\tfrac13, \tfrac23)\,{}_2F_1(\tfrac23, \tfrac13; 1; u) \;=\; B(\tfrac13, \tfrac23)\,{}_2F_1(\tfrac13, \tfrac23; 1; u).
\]
Substituting $w = u + (1-u)t$ in $D$ and a parallel computation give $D(u) = B(\tfrac13, \tfrac23)\,{}_2F_1(\tfrac13, \tfrac23; 1; 1 - u)$. The $B$-factor cancels in the ratio, yielding the claimed hypergeometric expression. Both $N$ and $D$ satisfy~\eqref{eq:hgeo-ODE}: $N(u)$ is holomorphic at $u = 0$ (the Euler integral converges there), while $D(u)$ is holomorphic at $u = 1$. They are linearly independent---their ratio is nonconstant by Lemma~\ref{lem:endpoint-r} below---and hence span the solution space of~\eqref{eq:hgeo-ODE} on $(0,1)$.
\end{proof}

The side length is a genuine coordinate on the family of $\pi/3$ parallelograms:

\begin{lemma}[Boundary behavior of the prevertex parameter]
\label{lem:endpoint-r}
The function $u\mapsto r(u)$ of Theorem~\ref{thm:r-ratio} is a real-analytic, strictly increasing bijection $(0,1)\to(0,\infty)$; in particular $r(u)\to 0$ as $u\to0^+$ and $r(u)\to+\infty$ as $u\to1^-$.
\end{lemma}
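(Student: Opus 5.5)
We work directly with the period integrals $N(u)$ and $D(u)$ of Definition~\ref{def:AB}, using the identity $r(u)=N(u)/D(u)$ from Theorem~\ref{thm:r-ratio}. The proof has three steps: real-analyticity, the two endpoint limits, and strict monotonicity. Once these are in place, continuity together with the limits $0$ and $\infty$ gives surjectivity onto $(0,\infty)$, and strict monotonicity gives injectivity.

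\emph{Analyticity.} After the substitutions $w=ut$ in $N$ and $w=u+(1-u)t$ in $D$ made in the proof of Theorem~\ref{thm:r-ratio}, both functions are integrals over $[0,1]$ with the same integrable endpoint singularities $t^{-2/3}(1-t)^{-1/3}$ and a smooth remaining factor. Explicitly,
\[
N(u)=B(\tfrac13,\tfrac23)\,{}_2F_1(\tfrac13,\tfrac23;1;u),\qquad D(u)=B(\tfrac13,\tfrac23)\,{}_2F_1(\tfrac13,\tfrac23;1;1-u).
\]
Each ${}_2F_1$ is given by a power series with positive coefficients that converges on the unit disk, so it is real-analytic and strictly positive on $(0,1)$. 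Hence $r=N/D$ is real-analytic and positive on $(0,1)$.

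\emph{Endpoint limits.} As $u\to0^+$ we have ${}_2F_1(\tfrac13,\tfrac23;1;u)\to1$. The factor ${}_2F_1(\tfrac13,\tfrac23;1;1-u)$ diverges, because the parameters satisfy $c-a-b=0$. The classical logarithmic asymptotic
\[
{}_2F_1(a,b;a+b;z)\sim \frac{\Gamma(a+b)}{\Gamma(a)\Gamma(b)}\log\frac{1}{1-z}\qquad (z\to1^-)
\]
can be seen directly from the coefficients, which behave like $\tfrac{\Gamma(a+b)}{\Gamma(a)\Gamma(b)}\,n^{-1}$. It gives $D(u)\sim\frac{\sqrt3}{2\pi}\,B(\tfrac13,\tfrac23)\log(1/u)\to\infty$, so $r(u)\sim \frac{2\pi}{\sqrt3\log(1/u)}\to0$. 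The case $u\to1^-$ is symmetric: under $u\mapsto 1-u$ the roles of $N$ and $D$ are exchanged, so $r(1-u)=1/r(u)$, and therefore $r(u)\to\infty$.

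\emph{Monotonicity.} This is the step that carries the content, since it is what makes the side length a coordinate. The simplest route avoids the Wronskian: ${}_2F_1(\tfrac13,\tfrac23;1;u)$ is a power series in $u$ with strictly positive coefficients. It is therefore strictly increasing in $u$ on $(0,1)$. By the same reasoning ${}_2F_1(\tfrac13,\tfrac23;1;1-u)$ is strictly decreasing in $u$. Both are positive, so their ratio $r(u)$ is strictly increasing.

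As a cross-check one can compute the Wronskian of equation~\eqref{eq:hgeo-ODE}. Abel's formula gives $W(N,D)=C/(u(1-u))$ for a constant $C$, and then $r'=-W/D^2$ has constant sign. Determining that sign, and confirming $C\neq0$, would require evaluating $C$ from the local expansions at $u=0$; the positivity argument above sidesteps this.

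The main obstacle is the endpoint asymptotic at the logarithmic singularity $c=a+b$, since the Gauss summation formula fails there. It is handled by the coefficient estimate above, or alternatively by bounding the Euler integral from below near $t=1$.
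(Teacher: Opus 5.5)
Your proof is correct and shares the paper's skeleton (Appendix~\ref{app:endpoint}): $N$ strictly increasing, $D$ strictly decreasing, one of them blowing up logarithmically at each endpoint while the other stays bounded, then the intermediate value theorem. What differs is the toolkit.

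The paper stays with the Euler integrals. Monotonicity comes from the integrand $(1-ut)^{-2/3}$ of $N$ being increasing in $u$ for each fixed $t$, and $(u+(1-u)t)^{-2/3}$ in $D$ being decreasing. The endpoint behaviour comes from explicit bounds such as $N(u)\ge 2^{-2/3}\log\frac{1}{2(1-u)}$ and $D(u)\ge\log\frac{1}{4u}$, plus dominated convergence, and the two endpoints are handled separately.

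You work instead with the Taylor series of ${}_2F_1(\tfrac13,\tfrac23;1;\cdot)$. Positive coefficients give monotonicity; this is the series counterpart of the same fact, since those coefficients arise by termwise integration of the binomial expansion of $(1-ut)^{-2/3}$. The coefficient asymptotic $\frac{\sqrt3}{2\pi}n^{-1}$ gives the divergence, and $r(1-u)=1/r(u)$ gives the second endpoint for free.

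Your version is shorter and produces the sharp rate $r(u)\sim 2\pi/(\sqrt3\log(1/u))$, consistent with Lemma~\ref{lem:connection}. The paper's version is more elementary: it avoids Gamma-ratio asymptotics and uses the hypergeometric identification only for analyticity. For the lemma itself you only need ${}_2F_1(\tfrac13,\tfrac23;1;z)\to\infty$ as $z\to1^-$. That follows from the divergence of the coefficient sum by monotone convergence, so the Abelian step to the exact logarithmic constant can be dropped.

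Two harmless slips. After $w=u+(1-u)t$ the endpoint singularities of $D$ are $t^{-1/3}(1-t)^{-2/3}$, so a further $t\mapsto 1-t$ is needed to match $N$. And the Wronskian aside is already settled in the paper, which computes the constant as $W_0=B>0$ in Lemma~\ref{lem:dvdtau} from the connection formula at $u=1$.
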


The proof, elementary but slightly technical, is deferred to Appendix~\ref{app:endpoint}.

\begin{proof}[Proof of Theorem~\ref{thm:cardy-parallelogram}\textup{(i)}]
Part~(ii) is proved in~\S\ref{sec:monodromy} (Lemma~\ref{lem:u-v-explicit}); we prove part~(i) here.

Set $v(r) := 1 - u(r)$, where $u(r)$ is the inverse of the function $r = r(u)$ established in Theorem~\ref{thm:r-ratio} (this inverse exists by the monotonicity of Lemma~\ref{lem:endpoint-r}, proved in Appendix~\ref{app:endpoint}). By construction, $g(r) = \Phi_\Delta(u(r))$, and Theorem~\ref{thm:cardy-carleson} applied to the conformal image $g(P_r) = \Delta$ gives $\Pi_h^{\mathrm{par}}(r) = 1 - g(r)$. Since $\Phi_\Delta(1) = 1$, the normalization in Definition~\ref{def:PhiDelta} supplies the denominator, so
\[
\Pi_h^{\mathrm{par}}(r) \;=\; 1 - g(r) \;=\; \frac{1}{B(\tfrac13,\tfrac13)}\int_{u(r)}^1 w^{-2/3}(1-w)^{-2/3}\,dw.
\]
The standard incomplete-beta-to-${}_2F_1$ identity
\begin{equation}
\label{eq:incomplete-beta-2F1}
\int_0^z w^{-2/3}(1-w)^{-2/3}\,dw \;=\; 3\,z^{1/3}\,{}_2F_1(\tfrac13, \tfrac23; \tfrac43; z) \qquad (z \in [0,1]),
\end{equation}
which follows by expanding $(1-w)^{-2/3}$ in a binomial series and integrating term by term, using $\tfrac{1}{k+1/3}=3\,\tfrac{(1/3)_k}{(4/3)_k}$, gives the second equality after the substitution $w = 1 - \tilde w$ converts the tail $\int_{u(r)}^1$ on the left into $\int_0^{v(r)}$ with $v = 1 - u$:
\[
\int_{u(r)}^1 w^{-2/3}(1-w)^{-2/3}\,dw \;=\; \int_0^{v(r)} \tilde w^{-2/3}(1-\tilde w)^{-2/3}\,d\tilde w \;=\; 3\,v(r)^{1/3}\,{}_2F_1(\tfrac13, \tfrac23; \tfrac43; v(r)).
\]
This yields the second form, completing the proof of part~(i).
\end{proof}

\section{Monodromy and modular uniformization}
\label{sec:monodromy}

Our goal in this section is to introduce a uniformizing coordinate $\tau\in\mathbb H$ in which the prevertex parameter $u$ of \S\ref{sec:SC} becomes a modular function for $\Gamma_0(3)$, and to identify that function explicitly as an eta quotient.

\subsection{Notation}
\label{subsec:group-notation}

For an integer $N\ge 1$, the \emph{Hecke congruence subgroup of level $N$} is
\[
\Gamma_0(N)=\left\{\begin{pmatrix}a&b\\ c&d\end{pmatrix}\in \mathrm{SL}_2(\mathbb Z):c\equiv 0\pmod N\right\},
\]
acting on $\mathbb H$ by M\"obius transformations; we write $\overline{\Gamma}$ for the image of a subgroup $\Gamma\subseteq\mathrm{SL}_2(\mathbb Z)$ in $\mathrm{PSL}_2(\mathbb R)=\mathrm{SL}_2(\mathbb R)/\{\pm I\}$. Throughout we abbreviate
\[
T=\begin{pmatrix}1&1\\0&1\end{pmatrix},\qquad
L=\begin{pmatrix}1&0\\3&1\end{pmatrix},\qquad
W_3=\begin{pmatrix}0&-1\\3&0\end{pmatrix},
\]
so that $T\tau=\tau+1$, $L\tau=\tau/(3\tau+1)$, and $W_3\tau=-1/(3\tau)$. Both $T$ and $L$ lie in $\Gamma_0(3)$; the \emph{Fricke involution} $W_3$ does not, but normalizes $\Gamma_0(3)$.

We recall some standard vocabulary, used throughout. A \emph{Fuchsian group} is a discrete subgroup of $\mathrm{PSL}_2(\mathbb R)$. A nontrivial element of $\mathrm{PSL}_2(\mathbb R)$ is \emph{elliptic} if it fixes a point of $\mathbb H$; equivalently, $|\operatorname{tr}\gamma|<2$ for either matrix representative. A point of $\mathbb H$ fixed by some nontrivial element of a Fuchsian group $\Gamma$ is an \emph{elliptic point} of $\Gamma$, and its \emph{order} is the order of its stabilizer in $\Gamma$, a finite cyclic group. For $\Gamma$ commensurable with $\mathrm{PSL}_2(\mathbb Z)$, the \emph{cusps} of $\Gamma$ are the $\Gamma$-orbits of $\mathbb Q\cup\{\infty\}$, and the \emph{width} of the cusp $\infty$ is the least $h>0$ such that $\tau\mapsto\tau+h$ lies in $\Gamma$.

At a regular singular point of a second-order linear equation, the indicial equation has two roots, called the \emph{local exponents}: solutions behave like the corresponding powers of the local coordinate, with a logarithm appearing when the exponents coincide. The \emph{exponent difference} at the point is the difference of the two roots. For the hypergeometric equation with parameters $(a,b;c)$ the exponent pairs at $0$, $1$, $\infty$ are $(0,1-c)$, $(0,c-a-b)$ and $(a,b)$, respectively~\cite[pp.~17--18]{BeukersNotes}. For the Schwarz map we refer to~\cite[pp.~24--31]{BeukersNotes} and~\cite{Yoshida}; for modular-curve background, to~\cite[Chs.~2--3]{Diamond}.

\subsection{The Schwarz map and the coordinate \texorpdfstring{$\tau$}{tau}}
\label{subsec:uniformization}

Given a second-order linear ODE on $\mathbb P^1$ with three regular singular points, placed at $0,1,\infty$ after a M\"obius change of variable, the \emph{Schwarz map} is the ratio $u\mapsto f_1(u)/f_2(u)$ of two linearly independent local solutions. It is defined on any simply connected domain avoiding the singular points, and it is well defined there up to postcomposition with a M\"obius transformation, because two bases of the two-dimensional solution space differ by an element of $\mathrm{GL}_2(\mathbb C)$. Analytic continuation along a loop in $\mathbb P^1\setminus\{0,1,\infty\}$ replaces the Schwarz map by its postcomposition with a M\"obius transformation; the group of all transformations so obtained is the \emph{projective monodromy group} of the equation.

By Theorem~\ref{thm:r-ratio}, the side length $r(u)=N(u)/D(u)$ is itself a Schwarz map for the hypergeometric equation~\eqref{eq:hgeo-ODE}. We now fix a particular normalization of this map by definition, and compute its monodromy explicitly in Proposition~\ref{prop:monodromy-group}; Remark~\ref{rem:normalization} explains in what sense the normalization is forced.

\begin{definition}[The uniformizing coordinate]
\label{def:tau}
For $u$ in the upper half-plane $\mathbb H_u:=\{\Im u>0\}$, let
\[
\tau(u)\;:=\;\frac{i}{\sqrt3}\,\frac{N(u)}{D(u)},
\]
where $N,D$ are the period integrals of Definition~\ref{def:AB}, continued analytically from $(0,1)$ into $\mathbb H_u$. For $u\in(0,1)$ this reads $\tau=ir(u)/\sqrt3$, so that $\tau$ takes values on the positive imaginary axis.
\end{definition}

The key analytic input for this section is the following pair of connection formulae, describing $N$ and $D$ near the singular points $u=1$ and $u=0$ respectively. Here and below,
\[
G(z)\;:=\;{}_2F_1(\tfrac13,\tfrac23;1;z),\qquad B\;:=\;B(\tfrac13,\tfrac23)=\Gamma(\tfrac13)\Gamma(\tfrac23)=\frac{\pi}{\sin(\pi/3)}=\frac{2\pi}{\sqrt3},
\]
so that $N(u)=B\,G(u)$ and $D(u)=B\,G(1-u)$ by the proof of Theorem~\ref{thm:r-ratio}.

\begin{lemma}[Connection formulae]
\label{lem:connection}
There are functions $A$ and $\widetilde A$, holomorphic in a neighborhood of $u=1$ and of $u=0$ respectively, such that
\[
N(u)\;=\;-\frac{\sqrt3}{2\pi}\,D(u)\,\log(1-u)\;+\;A(u),
\qquad
D(u)\;=\;-\frac{\sqrt3}{2\pi}\,N(u)\,\log u\;+\;\widetilde A(u).
\]
Moreover $A(1)=\dfrac{3\sqrt3\,\log 3}{2\pi}\,B$.
\end{lemma}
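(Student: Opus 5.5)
The plan is to reduce both formulae to the classical logarithmic connection formula for the Gauss hypergeometric function in the degenerate case $c=a+b$, which is exactly our situation since $a+b=\tfrac13+\tfrac23=1=c$. Recall (e.g.\ Abramowitz--Stegun 15.3.10, or the Erd\'elyi/Whittaker--Watson treatment of the case where the exponent difference at $z=1$ is zero) that for $|1-z|<1$, $z\notin(-\infty,0]$,
\[
{}_2F_1(a,b;a+b;z)\;=\;\frac{\Gamma(a+b)}{\Gamma(a)\Gamma(b)}\sum_{n\ge0}\frac{(a)_n(b)_n}{(n!)^2}\Bigl[2\psi(n+1)-\psi(a+n)-\psi(b+n)-\log(1-z)\Bigr](1-z)^n ,
\]
with $\psi=\Gamma'/\Gamma$. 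We specialize to $a=\tfrac13$, $b=\tfrac23$, $z=u$. The prefactor is $\Gamma(1)/(\Gamma(\tfrac13)\Gamma(\tfrac23))=1/B=\sqrt3/(2\pi)$. The coefficient series of the $\log(1-u)$ term is $\sum_n \frac{(1/3)_n(2/3)_n}{(n!)^2}(1-u)^n=G(1-u)$. Hence
\[
G(u)\;=\;-\frac{\sqrt3}{2\pi}\,G(1-u)\log(1-u)\;+\;\frac{\sqrt3}{2\pi}\,H(1-u),
\]
where $H(x):=\sum_{n\ge0}\frac{(1/3)_n(2/3)_n}{(n!)^2}\bigl[2\psi(n+1)-\psi(n+\tfrac13)-\psi(n+\tfrac23)\bigr]x^n$. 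This series has radius of convergence $1$ (its coefficients grow at most like $\log n$ times those of $G$), so $H(1-u)$ is holomorphic near $u=1$.

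Multiplying by $B$ and using $N=B\,G(u)$, $D=B\,G(1-u)$ from the proof of Theorem~\ref{thm:r-ratio} gives the first formula with $A(u):=\frac{\sqrt3}{2\pi}B\,H(1-u)$. Then $A(1)=\frac{\sqrt3}{2\pi}B\,\bigl[2\psi(1)-\psi(\tfrac13)-\psi(\tfrac23)\bigr]$. Gauss's digamma theorem gives $\psi(\tfrac13)+\psi(\tfrac23)=-2\gamma-3\log3$ (equivalently, Legendre duplication/triplication: $\sum_{k=0}^{2}\psi(\tfrac k3)$ telescopes to $3\psi(1)-3\log 3$). Together with $\psi(1)=-\gamma$, the bracket equals $3\log3$. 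Thus $A(1)=\frac{3\sqrt3\log3}{2\pi}B$, as claimed.

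The second formula follows from the first by the symmetry $u\mapsto1-u$: since $N(1-u)=B\,G(1-u)=D(u)$ and $D(1-u)=N(u)$, substituting $u\to1-u$ in the first identity yields $D(u)=-\frac{\sqrt3}{2\pi}N(u)\log u+\widetilde A(u)$ with $\widetilde A(u):=A(1-u)$, holomorphic near $u=0$.

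The only point requiring care is the branch bookkeeping. $N$ and $D$ are defined on $(0,1)$ and continued into $\mathbb H_u$, so we must check that the principal $\log(1-u)$ (respectively $\log u$) is the branch appearing when the classical formula is read on a slit neighborhood of $u=1$ meeting $(0,1)$; on $(0,1)$ both sides are real, which pins this down. The identity then propagates by analytic continuation. I expect no real obstacle beyond quoting the degenerate connection formula correctly. If one prefers a self-contained route, one can instead derive it from the Euler integral for $N$ by splitting $(1-ut)^{-2/3}$ near $t=1$, but citing the classical formula is cleaner.
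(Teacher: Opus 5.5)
Your proposal is correct and follows the paper's proof essentially verbatim. Both specialize the degenerate ($c=a+b$) logarithmic expansion to $(a,b)=(\tfrac13,\tfrac23)$ with prefactor $\sqrt3/(2\pi)$, multiply by $B$, evaluate $2\psi(1)-\psi(\tfrac13)-\psi(\tfrac23)=3\log 3$, and obtain the second formula via $u\mapsto 1-u$. Two small slips, neither affecting the argument: in your multiplication-theorem aside the sum should be $\psi(\tfrac13)+\psi(\tfrac23)+\psi(1)=3\psi(1)-3\log 3$ rather than starting at $k=0$, where $\psi$ has a pole; and the condition for the principal logarithm is $1-z\notin(-\infty,0]$, not $z\notin(-\infty,0]$.
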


\begin{proof}
For $c=a+b$ the Gauss function has the logarithmic expansion~\cite[\S15.8(ii)]{DLMF} (stated there for Olver's normalized function $\mathbf F=F/\Gamma(c)$)
\[
{}_2F_1(a,b;a+b;z)=\frac{\Gamma(a+b)}{\Gamma(a)\Gamma(b)}\sum_{n\ge0}\frac{(a)_n(b)_n}{(n!)^2}\Bigl[2\psi(n+1)-\psi(a+n)-\psi(b+n)-\log(1-z)\Bigr](1-z)^n,
\]
where $\psi=\Gamma'/\Gamma$. With $(a,b)=(\tfrac13,\tfrac23)$ the prefactor is $1/\bigl(\Gamma(\tfrac13)\Gamma(\tfrac23)\bigr)=\sqrt3/(2\pi)$, and the coefficient of $-\log(1-z)$ is $\tfrac{\sqrt3}{2\pi}\sum_n\frac{(a)_n(b)_n}{(n!)^2}(1-z)^n=\tfrac{\sqrt3}{2\pi}G(1-z)$. Putting $z=u$ and multiplying by $B$ gives the first formula, with
\[
A(1)=B\cdot\frac{\sqrt3}{2\pi}\Bigl(2\psi(1)-\psi(\tfrac13)-\psi(\tfrac23)\Bigr).
\]
By the reflection and duplication identities for $\psi$ one has $\psi(\tfrac13)+\psi(\tfrac23)=-2\gamma-3\log3$, while $\psi(1)=-\gamma$; hence the bracket equals $3\log 3$, giving the stated value of $A(1)$. The second formula is the first with $u$ replaced by $1-u$.
\end{proof}

\subsection{The Schwarz triangle}

Write
\[
\rho\;:=\;\frac{-3+i\sqrt3}{6}\;=\;-\frac12+\frac{i}{2\sqrt3},
\]
and let
\begin{equation}
\label{eq:schwarz-triangle}
\mathcal T\;:=\;\Bigl\{\tau\in\mathbb H:\ -\tfrac12<\Re\tau<0,\ \bigl|\tau+\tfrac13\bigr|>\tfrac13\Bigr\}.
\end{equation}
This is a hyperbolic triangle with vertices $0$, $i\infty$ and $\rho$: its sides are the geodesics $\Re\tau=0$ (joining $0$ to $i\infty$), $\Re\tau=-\tfrac12$ (joining $\rho$ to $i\infty$), and the semicircle $|\tau+\tfrac13|=\tfrac13$ (joining $0$ to $\rho$). The semicircle is tangent to the imaginary axis at $0$, so the angles at the two ideal vertices $0$ and $i\infty$ vanish, while a direct computation gives interior angle $\pi/3$ at $\rho$.

\begin{proposition}[Schwarz]
\label{prop:schwarz-triangle}
The map $\tau$ of Definition~\ref{def:tau} extends continuously to $\overline{\mathbb H_u}\cup\{\infty\}$ and is a conformal bijection of $\mathbb H_u$ onto $\mathcal T$, carrying the boundary intervals and the singular points as follows:
\[
(0,1)\ \mapsto\ \{\Re\tau=0\},\qquad
(1,\infty)\ \mapsto\ \{\Re\tau=-\tfrac12\},\qquad
(-\infty,0)\ \mapsto\ \{|\tau+\tfrac13|=\tfrac13\},
\]
and $\tau(0)=0$, $\tau(1)=i\infty$, $\tau(\infty)=\rho$.
\end{proposition}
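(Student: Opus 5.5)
The plan is to use the classical Schwarz triangle theorem for the hypergeometric equation~\eqref{eq:hgeo-ODE} and then pin down the image triangle from explicit boundary values. The parameters are $(a,b;c)=(\tfrac13,\tfrac23;1)$, so the exponent differences are $|1-c|=0$ at $u=0$, $|c-a-b|=0$ at $u=1$, and $|a-b|=\tfrac13$ at $u=\infty$. All are real and lie in $[0,1)$. By the general theory~\cite[pp.~24--31]{BeukersNotes}, any ratio of two linearly independent real solutions on $(0,1)$ maps $\mathbb H_u$ conformally onto a circular-arc triangle with angles $0,0,\pi/3$ at the images of $0,1,\infty$. Since $N,D$ are real and positive on $(0,1)$, the ratio $N/D$ is such a ratio, and hence so is $\tau=\tfrac{i}{\sqrt3}N/D$. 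The remaining work is to identify the image as $\mathcal T$ and to match each boundary interval with the correct side.

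The first step is the interval $(0,1)$. There $\tau=ir(u)/\sqrt3$, and Lemma~\ref{lem:endpoint-r} makes this a bijection onto the positive imaginary axis with $\tau(0)=0$ and $\tau(1)=i\infty$.

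The second step is the interval $(1,\infty)$, continued from $(0,1)$ through $\mathbb H_u$, that is, passing above $u=1$. By Lemma~\ref{lem:connection}, $N=-\tfrac{\sqrt3}{2\pi}D\log(1-u)+A$ with $D$ and $A$ holomorphic near $1$. Going around $u=1$ through $\mathbb H_u$, $\log(1-u)$ acquires $-i\pi$ (since $\arg(1-u)$ goes from $0$ to $-\pi$). For $u>1$ near $1$ this gives $\tau=\tfrac{i}{\sqrt3}\bigl(-\tfrac{\sqrt3}{2\pi}(\log|1-u|-i\pi)+A/D\bigr)=-\tfrac12+i(\text{real})$, provided $A/D$ stays real on $(1,\infty)$. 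I will check that reality from the series: $A$ and $D$ are power series in $1-u$ with real coefficients. So $(1,\infty)$ lands on $\Re\tau=-\tfrac12$. Analytic continuation preserves this along the whole interval, because the arg shift is constant once the singular point is passed and $D,A$ remain real-analytic and real up to $\infty$ (or, more cleanly, the image of a real interval under the Schwarz map is a single circular arc, and we have identified one piece of it).

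The third step is the interval $(-\infty,0)$, handled symmetrically with the second formula of Lemma~\ref{lem:connection}, $D=-\tfrac{\sqrt3}{2\pi}N\log u+\widetilde A$. Continuing above $0$ to $u<0$ gives $\log u=\log|u|+i\pi$, so $D=N(\text{real}-\tfrac{\sqrt3}{2}i)$ with $N$ and $\widetilde A$ real there. Then
\[
\tau=\frac{i}{\sqrt3}\cdot\frac{1}{x-\tfrac{\sqrt3}{2}i},\qquad x\in\mathbb R .
\]
This traces the image of the line $\Im w=-\tfrac{\sqrt3}{2}$ under $w\mapsto \tfrac{i}{\sqrt3 w}$, which is a circle through $0$. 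A short computation identifies it as $|\tau+\tfrac13|=\tfrac13$: at $x=0$ one gets $\tau=-\tfrac23$, the far point of that circle, and the circle is tangent to the imaginary axis at $0$. The vertex $\tau(\infty)$ is then the common point of the lines $\Re\tau=-\tfrac12$ and this circle, namely $\rho$.

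The three sides now bound $\mathcal T$, and orientation, with $\mathbb H_u$ lying to the left of $(0,1)$ traversed increasingly, puts the image on the side $\Re\tau<0$. Together with the angle $\pi/3$ at $\rho$, this confirms the image is exactly $\mathcal T$. Continuity up to the boundary, including at $u=\infty$ where the exponents are $\tfrac13,\tfrac23$, comes from the local Frobenius forms. I expect the main obstacle to be the bookkeeping of branches: the correct sign of the $\pm i\pi$ jumps, and verifying that $A/D$ and $\widetilde A/N$ stay real across the whole continued intervals and not merely near the singular points. If these local arguments get messy, I will instead invoke the standard fact that each real interval maps into one circle and use the local computations only to identify which circle.
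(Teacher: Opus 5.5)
Your proof is correct, but it identifies the image triangle by a different mechanism than the paper.

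\textbf{The paper's route.} The paper uses Lemma~\ref{lem:connection} only at $u=1$, for all directions of approach $1-u=\varepsilon e^{i(\theta-\pi)}$. This shows that near $i\infty$ the image is the strip $-\tfrac12<\Re\tau<0$. It then finds the third side geometrically. That side is tangent to the imaginary axis at $\tau(0)=0$, so it lies on a circle $|\tau+R|=R$. Imposing the Schwarz angle $\pi/3$ where this circle meets $\Re\tau=-\tfrac12$ forces $R=\tfrac13$.

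\textbf{Your route.} You evaluate both connection formulae on the far side of the singular point: $\log(1-u)=\log|1-u|-i\pi$ on $(1,\infty)$ and $\log u=\log|u|+i\pi$ on $(-\infty,0)$. Each side is then read off directly. The condition $\Re\tau<0$ comes from orientation, and the angle $\pi/3$ at $\rho$ becomes a consistency check rather than an input.

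\textbf{Comparison.} Your approach treats both cusps uniformly and avoids the tangent-angle computation. The paper's approach needs one branch computation fewer. Your proposed fallback is the right way to avoid checking that $A/D$ and $\widetilde A/N$ stay real along the entire interval: Schwarz's theorem already puts each boundary interval onto a single circle.

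\textbf{One point to tighten.} The line $\Re\tau=-\tfrac12$ meets the circle $|\tau+\tfrac13|=\tfrac13$ at both $\rho$ and $\overline\rho$, so you should say why the vertex is $\rho$. For example, $x\to+\infty$ as $u\to0^-$. For $x>0$ one has $\Im\tau=x/\bigl(\sqrt3(x^2+\tfrac34)\bigr)>0$, so this side leaves $0$ into $\Im\tau>0$. The point $\rho$ corresponds to $x=\tfrac12$.
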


\begin{proof}
The local exponent pairs of~\eqref{eq:hgeo-ODE} at $u=0,1,\infty$ are $(0,1-c)=(0,0)$, $(0,c-a-b)=(0,0)$ and $(a,b)=(\tfrac13,\tfrac23)$, so the exponent differences are $\lambda_0=\lambda_1=0$ and $\lambda_\infty=\tfrac13$~\cite[p.~17]{BeukersNotes}. By Schwarz's theorem on the Schwarz map (see~\cite[Thm.~2.2.12]{BeukersNotes} or~\cite{Yoshida}), the image of $\mathbb H_u$ under any branch is a curvilinear triangle with angles $\pi\lambda_0=0$, $\pi\lambda_1=0$ and $\pi\lambda_\infty=\pi/3$, and the map is a conformal bijection onto it that extends continuously to the closure, sending the three boundary intervals to the three sides.

It remains to identify the particular triangle for the branch of Definition~\ref{def:tau}. Since $N$ and $D$ are real and positive on $(0,1)$, we have $\tau\bigl((0,1)\bigr)\subset i\mathbb R_{>0}$, and Lemma~\ref{lem:endpoint-r} gives $\tau(0)=0$ and $\tau(1)=i\infty$; thus the side joining the two ideal vertices is the imaginary axis. To locate the third vertex, take $u\to1$ inside $\mathbb H_u$ and write $1-u=\varepsilon e^{i(\theta-\pi)}$ with $\varepsilon\downarrow0$ and $\theta\in(0,\pi)$. By Lemma~\ref{lem:connection} and $D(1)=B$,
\[
\frac{N(u)}{D(u)}\;=\;-\frac{\sqrt3}{2\pi}\log(1-u)+\frac{A(1)}{B}+o(1)
\;=\;\frac{\sqrt3}{2\pi}\Bigl(\log\frac{1}{\varepsilon}+3\log 3\Bigr)-\frac{i\sqrt3}{2\pi}(\theta-\pi)+o(1),
\]
so that
\begin{equation}
\label{eq:cusp-asymptotics}
\Re\tau\;=\;\frac{\theta-\pi}{2\pi}+o(1),\qquad
\Im\tau\;=\;\frac{1}{2\pi}\log\frac{27}{\varepsilon}+o(1).
\end{equation}
As $\theta$ runs over $(0,\pi)$ the real part fills $(-\tfrac12,0)$, so near the vertex $i\infty$ the triangle is the strip $-\tfrac12<\Re\tau<0$, with $\theta\to\pi$ (i.e.\ $u\uparrow1$ along $(0,1)$) giving the side $\Re\tau=0$ and $\theta\to0$ (i.e.\ $u\downarrow1$ along $(1,\infty)$) giving the side $\Re\tau=-\tfrac12$. The two sides of $\mathcal T$ emanating from $i\infty$ are therefore the vertical lines $\Re\tau=0$ and $\Re\tau=-\tfrac12$, and the third vertex $\tau(\infty)$ lies on the latter.

It remains to identify the third side, the image of $(-\infty,0)$. Being the image of a segment of $\mathbb R$ under a Schwarz map, it is an arc of a circle or line; it passes through $\tau(0)=0$, and the angle of the triangle there is $0$, so it is tangent to the imaginary axis at $0$. A circle through $0$ tangent to the imaginary axis has its centre on $\mathbb R$; write it as $|\tau+R|=R$ with $R>0$ (the sign because the triangle lies in $\Re\tau<0$). Provided $R>\tfrac14$, this circle meets the line $\Re\tau=-\tfrac12$ at $P=-\tfrac12+i\sqrt{R-\tfrac14}$. Writing $P=-R+R(\cos\alpha,\sin\alpha)$ we have $\cos\alpha=(R-\tfrac12)/R$, and the unit tangent to the circle at $P$ pointing back towards $0$ is $(\sin\alpha,-\cos\alpha)$; the interior angle $\theta$ of the triangle at $P$, measured between this tangent and the upward vertical $(0,1)$, therefore satisfies
\[
\cos\theta\;=\;-\cos\alpha\;=\;\frac{\tfrac12-R}{R}.
\]
Setting $\theta=\pi/3$ gives $R=\tfrac13$, hence $P=-\tfrac12+\tfrac{i}{2\sqrt3}=\rho$, and this value of $R$ is the unique one. Hence the image is $\mathcal T$, and $\tau(\infty)=\rho$.
\end{proof}

\begin{figure}[htbp]
\centering
\begin{tikzpicture}[scale=5.4, >=latex]
\begin{scope}
\clip (-0.5,0) rectangle (0,1.02);
\fill[gray!22] (-0.5,0) rectangle (0,1.02);
\fill[white] (-0.333333,0) circle (0.333333);
\end{scope}
\begin{scope}
\clip (0,0) rectangle (0.5,1.02);
\fill[gray!8] (0,0) rectangle (0.5,1.02);
\fill[white] (0.333333,0) circle (0.333333);
\end{scope}
\draw[gray!55] (-0.72,0) -- (0.72,0);
\foreach \x/\lab in {-0.5/{-\frac12},0/{0},0.5/{\frac12}}
  \draw[gray!55] (\x,0.014) -- (\x,-0.014) node[below=0pt,black,scale=0.78] {$\lab$};
\draw[thick] (0,0) arc (0:120:0.333333);
\draw[thick] (0,0) arc (180:60:0.333333);
\draw[thick] (-0.5,0.288675) -- (-0.5,1.02);
\draw[thick] (0.5,0.288675) -- (0.5,1.02);
\draw[semithick,densely dashed] (0,0) -- (0,1.02);
\fill (-0.5,0.288675) circle (0.0065) node[left=2pt,scale=0.85] {$\rho$};
\fill (0.5,0.288675) circle (0.0065) node[right=2pt,scale=0.85] {$\rho+1$};
\node[scale=0.85] at (0,1.09) {$i\infty$};
\node[scale=0.95] at (-0.27,0.72) {$\mathcal T$};
\node[scale=0.95] at (0.27,0.72) {$\mathcal T^{*}$};
\draw[<->,gray!70] (-0.5,0.86) -- node[above,black,scale=0.75,inner sep=1.5pt]{$T$} (0.5,0.86);
\draw[<->,gray!70] (-0.166667,0.288675) to[bend right=38] node[below,black,scale=0.75,inner sep=1.5pt]{$L$} (0.166667,0.288675);
\end{tikzpicture}
\caption{The Schwarz triangle $\mathcal T$ of~\eqref{eq:schwarz-triangle} (darker) is the image of the upper half $u$-plane under the coordinate $\tau$ of Definition~\ref{def:tau}. Its vertices $0$, $i\infty$, $\rho=(-3+i\sqrt3)/6$ are the images of the singular points $u=0,1,\infty$ of~\eqref{eq:hgeo-ODE}, and its angles there are $0$, $0$, $\pi/3$, matching the exponent differences $\lambda_0=\lambda_1=0$, $\lambda_\infty=\tfrac13$. Reflection in the dashed side $\Re\tau=0$ produces $\mathcal T^{*}$, the image of the lower half $u$-plane. Their union $\mathcal F=\mathcal T\cup i\mathbb R_{>0}\cup\mathcal T^{*}$ is the standard fundamental domain for $\Gamma_0(3)$; the side pairings are $T:\tau\mapsto\tau+1$ on the two vertical sides and $L:\tau\mapsto\tau/(3\tau+1)$ on the two arcs, and these are exactly the generators produced by the monodromy computation of Proposition~\ref{prop:monodromy-group}.}
\label{fig:hyperbolic-schwarz}
\end{figure}

\subsection{Projective monodromy}

\begin{proposition}[Explicit projective monodromy]
\label{prop:monodromy-group}
Let $\ell_0$ and $\ell_1$ be positively oriented simple loops in $\mathbb C\setminus\{0,1\}$ encircling $u=0$ and $u=1$ respectively. Analytic continuation of the coordinate $\tau$ of Definition~\ref{def:tau} along $\ell_0$ and along $\ell_1$ acts on $\tau$ by
\[
\ell_0:\ \tau\longmapsto \frac{\tau}{1-3\tau}\quad(\text{the matrix }L^{-1}),\qquad
\ell_1:\ \tau\longmapsto \tau+1\quad(\text{the matrix }T).
\]
Consequently the projective monodromy group of~\eqref{eq:hgeo-ODE} in the coordinate $\tau$ is
\[
\overline M\;=\;\bigl\langle\,T,\ L\,\bigr\rangle\;\subseteq\;\overline{\Gamma_0(3)}.
\]
Moreover $L^{-1}T=\left(\begin{smallmatrix}1&1\\-3&-2\end{smallmatrix}\right)$ has trace $-1$, hence order $3$ in $\mathrm{PSL}_2(\mathbb R)$, and fixes $\rho=\tau(\infty)$; it is the local monodromy at $u=\infty$ for a suitable choice of base path.
\end{proposition}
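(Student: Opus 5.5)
We compute the two local monodromies directly from the connection formulae of Lemma~\ref{lem:connection}, using that $\tau=\tfrac{i}{\sqrt3}N/D$ with $N=B\,G(u)$ holomorphic at $u=0$ and $D=B\,G(1-u)$ holomorphic at $u=1$.

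\emph{Loop around $u=1$.} Near $u=1$ write $N=-\tfrac{\sqrt3}{2\pi}D\log(1-u)+A(u)$, where $D$ and $A$ are single-valued. Continuing along $\ell_1$ (positively around $u=1$) changes $\log(1-u)$ by $+2\pi i$, since $1-u$ also winds once positively around $0$. Hence $N\mapsto N-\tfrac{\sqrt3}{2\pi}\cdot 2\pi i\,D=N-i\sqrt3\,D$, while $D$ is unchanged. Then
\[
\tau=\frac{i}{\sqrt3}\frac{N}{D}\longmapsto \frac{i}{\sqrt3}\frac{N}{D}+1=\tau+1 .
\]
I will double-check this sign against the asymptotics~\eqref{eq:cusp-asymptotics}, where $\Re\tau=(\theta-\pi)/(2\pi)$. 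Moving once around $u=1$ shifts $\arg(1-u)$ by $2\pi$ and therefore $\Re\tau$ by $+1$, which is consistent.

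\emph{Loop around $u=0$.} Here $N$ is single-valued and $D=-\tfrac{\sqrt3}{2\pi}N\log u+\widetilde A$. Along $\ell_0$ we get $\log u\mapsto\log u+2\pi i$, so $D\mapsto D-i\sqrt3\,N$. Writing $D=\tfrac{i}{\sqrt3}N/\tau$, the continued $\tau$ is
\[
\frac{\tfrac{i}{\sqrt3}N}{D-i\sqrt3 N}=\frac{\tfrac{i}{\sqrt3}N}{\tfrac{i}{\sqrt3}N\,(1/\tau-3)}=\frac{\tau}{1-3\tau},
\]
which is the matrix $L^{-1}$. The fundamental group of $\mathbb C\setminus\{0,1\}$ is generated by $\ell_0,\ell_1$ once a base point is fixed (take the base point on $(0,1)$ with loops based there). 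So the projective monodromy group is $\langle T,L^{-1}\rangle=\langle T,L\rangle$. Both matrices lie in $\Gamma_0(3)$, which gives the inclusion into $\overline{\Gamma_0(3)}$.

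\emph{The element at infinity.} A direct multiplication gives $L^{-1}T=\left(\begin{smallmatrix}1&0\\-3&1\end{smallmatrix}\right)\left(\begin{smallmatrix}1&1\\0&1\end{smallmatrix}\right)=\left(\begin{smallmatrix}1&1\\-3&-2\end{smallmatrix}\right)$, with trace $-1$. An elliptic element of trace $\pm1$ has eigenvalues that are primitive sixth (or third) roots of unity, so its image in $\mathrm{PSL}_2$ has order $3$. Its fixed point in $\mathbb H$ solves $-3\tau^2-3\tau-1=0$, giving $\tau=\tfrac{-3+i\sqrt3}{6}=\rho$. Since the product of the loops around $0$ and $1$ (in the appropriate order) is homotopic to the inverse of a loop around $\infty$, this element (or its inverse) is the local monodromy at $\infty$. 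This is consistent with Proposition~\ref{prop:schwarz-triangle}, which gives $\tau(\infty)=\rho$ and angle $\pi/3$.

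The main obstacle is bookkeeping of orientations and base points. One issue is whether the composite is $L^{-1}T$ or $TL^{-1}$, which depends on the composition convention for path continuation. The other is that the connection formulae hold locally near $0$ and near $1$ separately. So I would fix the base point on $(0,1)$, where both expansions are valid on overlapping discs (they converge for $|u|<1$ and $|1-u|<1$), and verify that the signs of $\log$ agree with the principal branches used there.
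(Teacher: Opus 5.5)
Your proposal is correct and follows the paper's proof essentially verbatim: both read off the local monodromies from the logarithmic connection formulae of Lemma~\ref{lem:connection}, and then check by direct multiplication that $L^{-1}T$ has trace $-1$ and fixes $\rho$. The only differences are cosmetic. You track the linear action $(N,D)\mapsto(N-i\sqrt3\,D,\,D)$ and $(N,D)\mapsto(N,\,D-i\sqrt3\,N)$, whereas the paper writes $\tau$ and $1/\tau$ as a logarithm plus a holomorphic term, which is the same computation. Your choice of a base point on $(0,1)$, where both local expansions converge, makes explicit a detail the paper leaves implicit.
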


\begin{proof}
Near $u=1$ the function $D$ is holomorphic and $D(1)=B\ne0$, so by Lemma~\ref{lem:connection}
\[
\tau=\frac{i}{\sqrt3}\cdot\frac{N}{D}=-\frac{i}{2\pi}\log(1-u)+h(u),\qquad h:=\frac{i}{\sqrt3}\cdot\frac{A}{D}\ \text{holomorphic at }u=1.
\]
Continuation along $\ell_1$ sends $\log(1-u)\mapsto\log(1-u)+2\pi i$ and fixes $h$, hence $\tau\mapsto\tau+1$.

Near $u=0$ the function $N$ is holomorphic with $N(0)=B\ne0$. Since $\tau=\tfrac{i}{\sqrt3}N/D$ we have $1/\tau=-i\sqrt3\,D/N$, and substituting the second connection formula of Lemma~\ref{lem:connection} gives
\[
\frac{1}{\tau}\;=\;-i\sqrt3\left(-\frac{\sqrt3}{2\pi}\log u+\frac{\widetilde A(u)}{N(u)}\right)
\;=\;\frac{3i}{2\pi}\,\log u\;+\;\widetilde h(u),
\qquad \widetilde h:=-i\sqrt3\,\frac{\widetilde A}{N}\ \text{holomorphic at }u=0.
\]
Continuation along $\ell_0$ sends $\log u\mapsto\log u+2\pi i$ and fixes $\widetilde h$, hence $1/\tau\mapsto 1/\tau+\tfrac{3i}{2\pi}\cdot 2\pi i=1/\tau-3$, i.e.\ $\tau\mapsto\tau/(1-3\tau)$.

The projective monodromy group is generated by these two transformations, so $\overline M=\langle L^{-1},T\rangle=\langle T,L\rangle$, a subgroup of $\overline{\Gamma_0(3)}$ since $T,L\in\Gamma_0(3)$. Finally, since $\gamma_0\gamma_1$ is freely homotopic to a loop encircling $u=\infty$, a suitable ordering of the standard generators of $\pi_1(\mathbb P^1\setminus\{0,1,\infty\})$ presents the local monodromy at $\infty$ as a conjugate of $(L^{-1}T)^{\pm1}$. Now $M:=L^{-1}T=\left(\begin{smallmatrix}1&1\\-3&-2\end{smallmatrix}\right)$ has trace $-1$ and determinant $1$, so the Cayley--Hamilton theorem gives $M^2+M+I=0$ and hence $M^3=I$: it is elliptic of order $3$ in $\mathrm{PSL}_2(\mathbb R)$, in agreement with $\lambda_\infty=\tfrac13$; its fixed points solve $3\tau^2+3\tau+1=0$, i.e.\ $\tau=(-3\pm i\sqrt3)/6$, the one in $\mathbb H$ being $\rho=\tau(\infty)$. (The opposite product $TL^{-1}$ also has trace $-1$ but fixes $\rho+1$, not $\rho$; the value $\tau(\infty)=\rho$ thus distinguishes the two orderings.)
\end{proof}

\begin{theorem}[The monodromy group is $\Gamma_0(3)$]
\label{thm:monodromy-Gamma03}
$\overline M=\overline{\Gamma_0(3)}$.
\end{theorem}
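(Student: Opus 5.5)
We must show $\overline{\langle T,L\rangle}=\overline{\Gamma_0(3)}$. Proposition~\ref{prop:monodromy-group} already gives the inclusion $\overline M\subseteq\overline{\Gamma_0(3)}$, so only the reverse inclusion remains. I see two routes. The cleanest is purely group-theoretic: show that $\Gamma_0(3)$ is generated by $T$, $L$ and $-I$. The second is geometric: use Proposition~\ref{prop:schwarz-triangle} to show that $\mathcal F=\mathcal T\cup i\mathbb R_{>0}\cup\mathcal T^*$ is a fundamental domain for $\overline M$, and compare hyperbolic areas or indices. I would write out the group-theoretic argument and mention the area check as confirmation.

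\textbf{Group-theoretic step.} Take $\gamma=\left(\begin{smallmatrix}a&b\\ c&d\end{smallmatrix}\right)\in\Gamma_0(3)$ and induct on $|c|$.
\begin{itemize}
\item If $c=0$, then $\gamma=\pm T^b$, which lies in $\pm\overline M$.
\item If $c\neq0$, then $3\mid c$ and $\gcd(c,d)=1$, so $d\not\equiv 0 \pmod 3$ and in particular $d\neq0$.
\item Right multiplication by $T^k$ replaces $d$ by $d+kc$. Choose $k$ so that $|d+kc|\le |c|/2$; the new entry is nonzero.
\item Right multiplication by $L^m$ replaces $c$ by $c+3md$. Since $c\equiv 0\pmod 3$, we may choose $m$ so that $|c+3md|\le \tfrac32|d|\le\tfrac34|c|<|c|$. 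The case $|d|=|c|/2$ cannot occur: $\gcd(c,d)=1$ would force $|c|=2$, contradicting $3\mid c$.
\item Both moves stay inside $\Gamma_0(3)$, so $|c|$ strictly decreases and the process terminates at $c=0$.
\end{itemize}
This proves $\Gamma_0(3)=\langle T,L,-I\rangle$, and hence $\overline{\Gamma_0(3)}=\overline M$.

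\textbf{Geometric confirmation.} The standard index $[\mathrm{PSL}_2(\mathbb Z):\overline{\Gamma_0(3)}]=4$ gives a fundamental domain of area $4\cdot\pi/3$. The area of $\mathcal F$ is twice that of $\mathcal T$, namely $2(\pi-\pi/3)=4\pi/3$. Poincaré's polygon theorem, applied with the side pairings $T$ and $L$ from Figure~\ref{fig:hyperbolic-schwarz} and the cycle condition at $\rho$ of angle sum $2\pi/3=2\pi/3$ (order three, consistent with $L^{-1}T$), shows that $\mathcal F$ is a fundamental domain for $\langle T,L\rangle$. Equal areas then force index one, which again yields the theorem.

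\textbf{Main obstacle.} In the group-theoretic route, the delicate point is the bookkeeping in the Euclidean descent: the reduction of $d$ modulo $c$ must not produce $d=0$, and the strict inequality $|c'|<|c|$ must hold. The remarks above on $3\mid c$ and $\gcd(c,d)=1$ handle both. If that argument proved awkward, I would rely on the Poincaré-theorem route instead, where the only real check is the elliptic cycle condition at $\rho$.
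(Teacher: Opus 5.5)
Your argument is correct, and your primary route is different from the paper's. The paper proves the theorem purely geometrically. Because the angles of $\mathcal T$ are $0,0,\pi/3$, the group generated by reflections in its sides is discrete with fundamental domain $\mathcal T$, so its orientation-preserving subgroup $\overline M$ has fundamental domain $\mathcal F$ of area $4\pi/3$. Since $[\mathrm{PSL}_2(\mathbb Z):\overline{\Gamma_0(3)}]=4$ and $\mathrm{PSL}_2(\mathbb Z)\backslash\mathbb H$ has area $\pi/3$, the covolumes of $\overline M\subseteq\overline{\Gamma_0(3)}$ coincide, and the index is $1$.

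Your ``geometric confirmation'' is essentially this argument, with Poincar\'e's polygon theorem for the pairings $T,L$ in place of the reflection-group argument. There you should also record the cusp cycles at $i\infty$ and $0$, whose cycle transformations $T$ and $L$ are parabolic, not only the elliptic cycle at $\rho$.

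Your Euclidean descent is a genuinely different, arithmetic proof, and it checks out. Right multiplication by $T^k$ and $L^m$ acts by $d\mapsto d+kc$ and $c\mapsto c+3md$. Coprimality keeps $d\neq0$ once $|c|\ge 3$, the new lower-left entry stays divisible by $3$, and $|c'|\le\tfrac32|d'|\le\tfrac34|c|<|c|$. The aside excluding $|d|=|c|/2$ is therefore harmless but unnecessary. The descent buys independence from discreteness of reflection groups, Gauss--Bonnet, the index formula and the covolume of $\mathrm{PSL}_2(\mathbb Z)$, and it yields an explicit word in $T,L,-I$ for each $\gamma$.

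What the descent does not give is that $\mathcal F$ is a fundamental domain for $\overline{\Gamma_0(3)}$ with side pairings $T$ and $L$. The paper extracts this fact from its proof and uses it immediately afterwards in Definition~\ref{def:u-tau} and in the injectivity argument of Corollary~\ref{cor:u-v-hauptmodul}. So if you lead with the descent, keep the Poincar\'e paragraph as a real part of the proof rather than a confirmation.
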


\begin{proof}
Let $\mathcal T^{*}:=\{\tau:-\overline\tau\in\mathcal T\}$ be the reflection of $\mathcal T$ in the imaginary axis, and set $\mathcal F:=\mathcal T\cup i\mathbb R_{>0}\cup\mathcal T^{*}$; explicitly
\[
\mathcal F=\Bigl\{\tau\in\mathbb H:\ |\Re\tau|<\tfrac12,\ \bigl|\tau+\tfrac13\bigr|>\tfrac13,\ \bigl|\tau-\tfrac13\bigr|>\tfrac13\Bigr\}.
\]
All three angles of $\mathcal T$ are of the form $\pi/m$ with $m\in\mathbb Z_{\ge2}\cup\{\infty\}$ (namely $m=\infty,\infty,3$), so by Schwarz's reflection principle the group generated by the reflections in the three sides of $\mathcal T$ is discrete with fundamental domain $\mathcal T$, and $\overline M$ is its subgroup of orientation-preserving elements, of index $2$, with fundamental domain $\mathcal F$~\cite{BeukersNotes,Yoshida}. By the Gauss--Bonnet formula---the hyperbolic area of a geodesic triangle equals $\pi$ minus the sum of its interior angles---the area of a triangle with angles $0,0,\pi/3$ is $\pi-\pi/3=2\pi/3$, so
\[
\operatorname{area}\bigl(\overline M\backslash\mathbb H\bigr)=\operatorname{area}(\mathcal F)=2\cdot\frac{2\pi}{3}=\frac{4\pi}{3}.
\]
On the other hand $[\mathrm{PSL}_2(\mathbb Z):\overline{\Gamma_0(3)}]=[\mathrm{SL}_2(\mathbb Z):\Gamma_0(3)]=4$ by the index formula $[\mathrm{SL}_2(\mathbb Z):\Gamma_0(N)]=N\prod_{p\mid N}(1+1/p)$~\cite[Ex.~1.2.3(e)]{Diamond}, while integrating the hyperbolic measure $d\mu=dx\,dy/y^2$~\cite[\S5.4]{Diamond} over the standard fundamental domain $\{|\Re\tau|\le\tfrac12,\ |\tau|\ge1\}$ gives
\[
\operatorname{area}(\mathrm{PSL}_2(\mathbb Z)\backslash\mathbb H)\;=\;\int_{-1/2}^{1/2}\frac{dx}{\sqrt{1-x^2}}\;=\;\frac{\pi}{3}.
\]
Hence
\[
\operatorname{area}\bigl(\overline{\Gamma_0(3)}\backslash\mathbb H\bigr)=4\cdot\frac{\pi}{3}=\frac{4\pi}{3}.
\]
For a subgroup $H\subseteq G$ of Fuchsian groups of finite covolume one has $\operatorname{area}(H\backslash\mathbb H)=[G:H]\cdot\operatorname{area}(G\backslash\mathbb H)$. Applying this to $\overline M\subseteq\overline{\Gamma_0(3)}$, the two covolumes are finite and equal, so $[\overline{\Gamma_0(3)}:\overline M]=1$, that is, $\overline M=\overline{\Gamma_0(3)}$.
\end{proof}

Theorem~\ref{thm:monodromy-Gamma03} in particular exhibits $\overline{\Gamma_0(3)}$ as the $(3,\infty,\infty)$ \emph{triangle group}---the orientation-preserving subgroup of the group generated by the reflections in the sides of a hyperbolic triangle with angles $\pi/3,0,0$---with $\mathcal F$ as fundamental domain and $T,L$ as side pairings. The isomorphism also follows from Takeuchi's classification of arithmetic triangle groups~\cite{Takeuchi}; we have included the direct proof to keep the treatment self-contained.

\begin{remark}[Uniqueness of the normalization]
\label{rem:normalization}
The factor $1/\sqrt3$ in Definition~\ref{def:tau} is the unique scaling for which the monodromy lies in $\Gamma_0(3)$. Indeed, repeating the computation of Proposition~\ref{prop:monodromy-group} for $\tau_\alpha:=i\alpha\,N/D$ with $\alpha>0$ gives local monodromies
\[
\begin{pmatrix}1&\alpha\sqrt3\\ 0&1\end{pmatrix}\quad\text{at }u=1,
\qquad
\begin{pmatrix}1&0\\ -\sqrt3/\alpha&1\end{pmatrix}\quad\text{at }u=0.
\]
Requiring both to lie in $\mathrm{SL}_2(\mathbb Z)$ forces $\alpha\sqrt3=m\in\mathbb Z$ and $\sqrt3/\alpha=n\in\mathbb Z$, whence $mn=3$; requiring the level condition $3\mid n$ then forces $(m,n)=(1,3)$ and $\alpha=1/\sqrt3$. In words: the product of the cusp width at $i\infty$ and the level equals $3$. No purely analytic condition on $\tau$ can pin $\alpha$ down, since conditions such as ``$\tau$ maps $(0,1)$ into $i\mathbb R_{>0}$, with $\tau\to0$ as $u\to0$ and $\tau\to i\infty$ as $u\to1$'' are invariant under $\tau\mapsto\alpha\tau$.

Conversely, the arithmetic condition \emph{does} characterize $\tau$. Suppose $\tau'$ is any Schwarz map of~\eqref{eq:hgeo-ODE} taking values in $\mathbb H$, whose projective monodromy group equals $\overline{\Gamma_0(3)}$ (not merely a conjugate of it), and which satisfies $\tau'(0)=0$, $\tau'(1)=i\infty$. Writing $\tau'=M\circ\tau$ with $M\in\mathrm{PSL}_2(\mathbb C)$, the cusp conditions force $M$ to fix $0$ and $\infty$, so $M\tau=\lambda\tau$; conjugation preserves the monodromy group, so $MTM^{-1}\colon\tau\mapsto\tau+\lambda$ and $MLM^{-1}=\left(\begin{smallmatrix}1&0\\3/\lambda&1\end{smallmatrix}\right)$ both lie in $\overline{\Gamma_0(3)}$, forcing $\lambda,1/\lambda\in\mathbb Z$, i.e.\ $\lambda=\pm1$; and $\lambda=1$ because $\tau'=\lambda\tau$ takes values in $\mathbb H$, whereas $-\tau(\mathbb H_u)=-\mathcal T$ lies in the lower half-plane. Hence $\tau'=\tau$.
\end{remark}

\subsection{The hauptmoduln \texorpdfstring{$u(\tau)$ and $v(\tau)$}{u(tau) and v(tau)}}

\begin{definition}[The modular functions $u(\tau)$ and $v(\tau)$]
\label{def:u-tau}
Let $\mathfrak u:\mathcal T\to\mathbb H_u$ be the inverse of the conformal bijection of Proposition~\ref{prop:schwarz-triangle}. Since $\mathfrak u$ is real-valued on the side $i\mathbb R_{>0}$, with $\mathfrak u\bigl(i\mathbb R_{>0}\bigr)=(0,1)\subset\mathbb R$, the Schwarz reflection principle extends $\mathfrak u$ across the side $\Re\tau=0$ to a holomorphic function on $\mathcal T\cup i\mathbb R_{>0}\cup\mathcal T^{*}=\mathcal F$, given on $\mathcal T^{*}$ by $\mathfrak u(\tau)=\overline{\mathfrak u(-\overline\tau)}$. Repeated reflection in the remaining sides extends $\mathfrak u$ to a meromorphic function on all of $\mathbb H$, invariant under the group $\overline M=\overline{\Gamma_0(3)}$ generated by pairs of such reflections. We write $u(\tau):=\mathfrak u(\tau)$ and $v(\tau):=1-u(\tau)$.
\end{definition}

The extension is well defined: $\mathfrak u$ is real-valued on each side of $\mathcal T$ (with values in $(0,1)$, $(1,\infty)$ and $(-\infty,0)$ respectively, by Proposition~\ref{prop:schwarz-triangle}), so each reflection is an instance of the Schwarz reflection principle; and the reflected copies close up around the vertex $\rho$ because the angle there is $\pi/3$, an integer submultiple of $\pi$, so that six copies of $\mathcal T$ tile a neighborhood of $\rho$.

Recall that if $\Gamma\subset\mathrm{PSL}_2(\mathbb R)$ is a Fuchsian group of finite covolume whose compactified quotient $X(\Gamma)$---the surface $\Gamma\backslash\mathbb H$ with the finitely many cusps adjoined---has genus zero, a \emph{hauptmodul} for $\Gamma$ is a $\Gamma$-invariant meromorphic function inducing a biholomorphism $X(\Gamma)\xrightarrow{\ \sim\ }\mathbb P^1(\mathbb C)$; equivalently, a $\Gamma$-invariant meromorphic function of degree $1$ on $X(\Gamma)$. A nonconstant invariant function of higher degree---the square of a hauptmodul, say---is \emph{not} a hauptmodul, so the degree assertion is the substance of the next statement.

\begin{corollary}[Hauptmoduln]
\label{cor:u-v-hauptmodul}
The functions $u(\tau)$ and $v(\tau)=1-u(\tau)$ of Definition~\ref{def:u-tau} are hauptmoduln for $\Gamma_0(3)$.
\end{corollary}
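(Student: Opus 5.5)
The plan is to show that $u$ induces a bijection from $X(\Gamma_0(3))$ onto $\mathbb P^1(\mathbb C)$. Once that holds for $u$, the statement for $v=1-u$ follows at once, because composing a hauptmodul with the M\"obius map $w\mapsto 1-w$ gives another hauptmodul.

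We already know from Definition~\ref{def:u-tau} that $u$ is meromorphic on $\mathbb H$ and invariant under $\overline{\Gamma_0(3)}$, the latter by Theorem~\ref{thm:monodromy-Gamma03}. What remains is to show that $u$ takes every value in $\mathbb P^1(\mathbb C)$ exactly once on $X(\Gamma_0(3))$, and that $u$ is meromorphic at the cusps.

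For the degree count I will use the fundamental domain $\mathcal F=\mathcal T\cup i\mathbb R_{>0}\cup\mathcal T^{*}$ from the proof of Theorem~\ref{thm:monodromy-Gamma03}. By Proposition~\ref{prop:schwarz-triangle}, $\mathfrak u$ maps $\mathcal T$ bijectively onto $\mathbb H_u$. On the open side $i\mathbb R_{>0}$ it maps bijectively onto $(0,1)$. On $\mathcal T^{*}$ we have $\mathfrak u(\tau)=\overline{\mathfrak u(-\overline\tau)}$, so it maps bijectively onto the lower half-plane. The remaining boundary of $\mathcal F$ consists of the two vertical sides and the two arcs, which are identified in pairs by $T$ and $L$. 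It therefore suffices to look at one side of each pair: the side $\Re\tau=-\tfrac12$ maps bijectively onto $(1,\infty)$ and the arc $|\tau+\tfrac13|=\tfrac13$ maps bijectively onto $(-\infty,0)$. The orbifold point $\rho$ goes to $u=\infty$, and the cusps $i\infty$ and $0$ go to $1$ and $0$, by the boundary correspondence in Proposition~\ref{prop:schwarz-triangle}. Together these pieces show that $u$ is a bijection from $X(\Gamma_0(3))$ onto $\mathbb P^1(\mathbb C)$.

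Two points need care, and they are the main obstacle.

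First, I must check that the side pairings really identify points with the same $u$-value, so that the map is well defined on the quotient and not two-to-one on the boundary. The reflected copies across $\Re\tau=-\tfrac12$ and across $\Re\tau=\tfrac12$ are related by $T$, since $u$ is $T$-invariant. Because $u$ is real on these sides, the value at $-\tfrac12+iy$ equals the value at $\tfrac12+iy$. The arcs are handled the same way using $L$.

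Second, I must justify that a continuous bijection $X(\Gamma_0(3))\to\mathbb P^1$ that is holomorphic away from finitely many points is in fact biholomorphic. The local behaviour at the special points settles this. At the cusp $i\infty$, equation~\eqref{eq:cusp-asymptotics} gives $1-u\sim 27q$, a simple zero in the local parameter $q$. At the cusp $0$, Proposition~\ref{prop:monodromy-group} gives $u=27e^{2\pi i(-1/(3\tau))}(1+o(1))$, again a simple zero in the width-$1$ local parameter of that cusp. At $\rho$, the local exponent $\tfrac13$ means $\tau-\rho$ behaves like $u^{-1/3}$ up to a M\"obius normalization; in the orbifold coordinate $((\tau-\rho)/(\tau-\bar\rho))^3$, the function $1/u$ therefore has a simple zero. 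So $u$ has degree $1$ on $X(\Gamma_0(3))$, and $u$ is a hauptmodul.
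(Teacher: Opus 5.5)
Your proposal is correct and follows essentially the same route as the paper. Both split the fundamental domain $\mathcal F$ into $\mathcal T$, $i\mathbb R_{>0}$ and $\mathcal T^{*}$, use the side pairings by $T$ and $L$, and conclude that $u$ induces a bijection $X_0(3)\to\mathbb P^1$ with $\rho\mapsto\infty$ and the cusps $0, i\infty\mapsto 0, 1$. Your extra local checks at the two cusps and at $\rho$ (simple zeros in the respective local parameters) are correct, and they supplement the paper's appeal to the fact that an injective holomorphic map between compact Riemann surfaces has degree $1$.
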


\begin{proof}
By construction $u$ is $\Gamma_0(3)$-invariant and meromorphic on $\mathbb H$, and by Proposition~\ref{prop:schwarz-triangle} it extends continuously to the cusps with $u(i\infty)=1$ and $u(0)=0$. It therefore descends to a nonconstant holomorphic map $X_0(3)=\overline{\Gamma_0(3)}\backslash\mathbb H^{*}\to\mathbb P^1$, where $\mathbb H^{*}=\mathbb H\cup\mathbb Q\cup\{\infty\}$. We show that this map is injective; an injective holomorphic map between compact Riemann surfaces has degree $1$.

By Theorem~\ref{thm:monodromy-Gamma03} the region $\mathcal F$ is a fundamental domain for $\Gamma_0(3)$, and its interior is the disjoint union $\mathcal T\sqcup i\mathbb R_{>0}\sqcup\mathcal T^{*}$. Proposition~\ref{prop:schwarz-triangle} says that $u$ maps $\mathcal T$ bijectively onto $\mathbb H_u$; the reflection defining $u$ on $\mathcal T^{*}$ then maps $\mathcal T^{*}$ bijectively onto the lower half-plane, and $i\mathbb R_{>0}$ bijectively onto $(0,1)$. These three images are pairwise disjoint, so $u$ is injective on the interior of $\mathcal F$. The boundary of $\mathcal F$ in $\mathbb H$ consists of the two vertical sides $\Re\tau=\pm\tfrac12$, interchanged by $T$, and the two arcs $|\tau\mp\tfrac13|=\tfrac13$, interchanged by $L$ (indeed $L(0)=0$ and $L\rho=\rho+1$); these are exactly the identifications defining $X_0(3)$, and $u$ respects them by invariance. The elliptic point $\rho$ is the unique point of $X_0(3)$ mapping to $\infty$, and the cusps $0$ and $i\infty$ are the unique points mapping to $0$ and to $1$. Hence $u$ induces a bijection $X_0(3)\to\mathbb P^1$, so $\deg u=1$ and $u$ is a hauptmodul. The same holds for $v=1-u$.
\end{proof}

\subsection{The Fricke symmetry}

The hypergeometric equation~\eqref{eq:hgeo-ODE} is manifestly invariant under $u\mapsto 1-u$. This elementary symmetry is the source of the Fricke involution, and hence of the functional equation in Theorem~\ref{thm:uniqueness}.

\begin{lemma}[Fricke symmetry of the hauptmodul]
\label{lem:fricke-u}
For all $\tau\in\mathbb H$,
\[
u\!\left(-\frac{1}{3\tau}\right)\;=\;1-u(\tau),\qquad\text{equivalently}\qquad v\circ W_3=u.
\]
\end{lemma}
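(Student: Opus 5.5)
The plan is to prove the identity first on the imaginary axis using the real-variable formula $\tau=ir(u)/\sqrt3$, and then extend it to all of $\mathbb H$ by the identity theorem. Both sides, $u\circ W_3$ and $1-u$, are meromorphic on $\mathbb H$ (Definition~\ref{def:u-tau}), and $i\mathbb R_{>0}$ has accumulation points in $\mathbb H$. So it suffices to check the identity for $\tau=it$ with $t>0$. For such $\tau$ we have $-1/(3\tau)=i/(3t)$, which again lies on the positive imaginary axis. There $u$ is real-valued and equals the inverse of the map $u\mapsto \tau(u)=\tfrac{i}{\sqrt3}N(u)/D(u)$ on $(0,1)$, by Proposition~\ref{prop:schwarz-triangle} and Lemma~\ref{lem:endpoint-r}.

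The key computation is the effect of $u\mapsto 1-u$ on $\tau$. By the proof of Theorem~\ref{thm:r-ratio}, $N(u)=B\,G(u)$ and $D(u)=B\,G(1-u)$, so $N(1-u)=D(u)$ and $D(1-u)=N(u)$. Hence for $u\in(0,1)$
\[
\tau(1-u)=\frac{i}{\sqrt3}\,\frac{D(u)}{N(u)}=\frac{i}{\sqrt3}\cdot\frac{1}{\sqrt3\,\tau(u)/i}=\frac{i^2}{3\,\tau(u)}=-\frac{1}{3\,\tau(u)}.
\]
Equivalently, $r(1-u)=1/r(u)$. Now fix $t>0$, put $u_0:=u(it)\in(0,1)$, so that $\tau(u_0)=it$. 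The display gives $\tau(1-u_0)=-1/(3it)=W_3(it)$. Since $\tau|_{(0,1)}$ is injective (Lemma~\ref{lem:endpoint-r}), applying the inverse yields $u(W_3(it))=1-u_0=1-u(it)$, which is the identity on the imaginary axis.

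Finally we extend to $\mathbb H$. The function $u\circ W_3-(1-u)$ is meromorphic on the connected set $\mathbb H$ and vanishes on $i\mathbb R_{>0}$, so it vanishes identically. This gives $v\circ W_3=u$, and composing with $W_3$, which is an involution projectively, also gives $u\circ W_3=v$.

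The only point needing care is that $u(\tau)$ on $i\mathbb R_{>0}$ really is the real inverse of $r$, with no branch ambiguity. This follows from Definition~\ref{def:u-tau}, since $\mathfrak u$ maps the side $i\mathbb R_{>0}$ onto $(0,1)$ as the inverse of the boundary map in Proposition~\ref{prop:schwarz-triangle}. I expect no real obstacle. As an optional consistency check, $W_3$ swaps the cusps $0$ and $i\infty$, and $u$ sends these to $0$ and $1$, which matches $u\mapsto1-u$.
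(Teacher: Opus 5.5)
Your proposal is correct and follows essentially the same route as the paper: swap $N(1-u)=D(u)$, $D(1-u)=N(u)$, deduce $\tau(1-u)=W_3\,\tau(u)$ on $(0,1)$, invert to get $u(W_3\tau)=1-u(\tau)$ on $i\mathbb R_{>0}$, and extend to $\mathbb H$ by the identity theorem. The only difference is that the paper obtains the swap by the substitution $w\mapsto 1-w$ in the period integrals, while you read it off from $N=B\,G(u)$ and $D=B\,G(1-u)$; this difference is immaterial.
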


\begin{proof}
The substitution $w\mapsto1-w$ in Definition~\ref{def:AB} shows that $N(1-u)=D(u)$ and $D(1-u)=N(u)$ for $u\in(0,1)$; hence $s:=N/D$ satisfies $s(1-u)=1/s(u)$ there. Writing $\tau=is/\sqrt3$, so that $s=-i\sqrt3\,\tau$, this becomes
\[
\tau(1-u)\;=\;\frac{i}{\sqrt3}\cdot\frac{1}{s(u)}\;=\;\frac{i}{\sqrt3}\cdot\frac{1}{-i\sqrt3\,\tau(u)}\;=\;-\frac{1}{3\tau(u)}\;=\;W_3\,\tau(u).
\]
Applying $\mathfrak u$ and using $\mathfrak u(\tau(u))=u$ gives $\mathfrak u(W_3\tau)=1-\mathfrak u(\tau)$ for every $\tau\in i\mathbb R_{>0}$. Both sides are meromorphic on $\mathbb H$ (Definition~\ref{def:u-tau}, together with $W_3\mathbb H=\mathbb H$) and agree on the imaginary axis, which has limit points in $\mathbb H$; by the identity theorem they agree on all of $\mathbb H$.
\end{proof}

The Fricke symmetry of the crossing probability, which is hypothesis~(ii) of Theorem~\ref{thm:uniqueness}, is now immediate and requires no modular input at all.

\begin{corollary}[Fricke symmetry of the crossing probability]
\label{cor:fricke-Pi}
For every $r>0$ one has $\Pi_h^{\mathrm{par}}(1/r)=1-\Pi_h^{\mathrm{par}}(r)$. Equivalently, writing $\widetilde\Pi_h^{\mathrm{par}}(\tau):=\Pi_h^{\mathrm{par}}(-i\sqrt3\,\tau)$ for $\tau\in i\mathbb R_{>0}$,
\[
\widetilde\Pi_h^{\mathrm{par}}(W_3\tau)\;=\;1-\widetilde\Pi_h^{\mathrm{par}}(\tau).
\]
\end{corollary}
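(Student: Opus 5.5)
The plan is to combine the incomplete beta formula of Theorem~\ref{thm:cardy-parallelogram}(i) with the symmetry $u\mapsto 1-u$ of the integrand $w^{-2/3}(1-w)^{-2/3}$, and with the behaviour of the side length under that symmetry, established in the proof of Lemma~\ref{lem:fricke-u}. The statement then reduces to a one-line computation once we know that the prevertex parameter of $P_{1/r}$ is $1-u(r)$.

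\textbf{Step 1: the prevertex of $P_{1/r}$.} In the proof of Lemma~\ref{lem:fricke-u} we saw that $N(1-u)=D(u)$ and $D(1-u)=N(u)$ for $u\in(0,1)$. Hence, by Theorem~\ref{thm:r-ratio},
\[
r(1-u)=\frac{N(1-u)}{D(1-u)}=\frac{D(u)}{N(u)}=\frac{1}{r(u)}.
\]
By Lemma~\ref{lem:endpoint-r}, $r$ is a bijection $(0,1)\to(0,\infty)$. Applying $r^{-1}$ therefore gives $u(1/r)=1-u(r)$, that is, $u(1/r)=v(r)$.

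\textbf{Step 2: complementarity.} Write $\beta(x)=\int_0^x w^{-2/3}(1-w)^{-2/3}\,dw$, so that $\beta(1)=B(\tfrac13,\tfrac13)$. Theorem~\ref{thm:cardy-parallelogram}(i) says
\[
\Pi_h^{\mathrm{par}}(r)=\frac{\beta(1)-\beta(u(r))}{\beta(1)}.
\]
The integrand is invariant under $w\mapsto1-w$, so $\beta(1)-\beta(x)=\beta(1-x)$. This gives $\Pi_h^{\mathrm{par}}(r)=\beta(v(r))/\beta(1)$. Combining with Step~1,
\[
\Pi_h^{\mathrm{par}}(1/r)=\frac{\beta(v(1/r))}{\beta(1)}=\frac{\beta(u(r))}{\beta(1)}=1-\frac{\beta(v(r))}{\beta(1)}=1-\Pi_h^{\mathrm{par}}(r).
\]

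\textbf{Step 3: the $\tau$-form.} Put $\tau=ir/\sqrt3$, so that $-i\sqrt3\,\tau=r$. Then
\[
W_3\tau=-\frac{1}{3\tau}=\frac{i}{\sqrt3\,r}.
\]
Hence $\widetilde\Pi_h^{\mathrm{par}}(W_3\tau)=\Pi_h^{\mathrm{par}}(1/r)$, and the equivalent form follows from Step~2.

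There is no real obstacle here. The only point requiring care is Step~1: inverting the relation $r(1-u)=1/r(u)$ to a statement about $u(r)$ uses the bijectivity from Lemma~\ref{lem:endpoint-r}. As an alternative route, the same conclusion can be read off from Lemma~\ref{lem:fricke-u} restricted to the imaginary axis.
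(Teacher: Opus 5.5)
Your proposal is correct and follows the paper's proof essentially verbatim: both derive $r(1-u)=1/r(u)$ from $N(1-u)=D(u)$ and $D(1-u)=N(u)$, conclude that the prevertex of $P_{1/r}$ is $1-u(r)$, and use the $w\mapsto 1-w$ invariance of the integrand to split the normalized beta integral into complementary pieces summing to $1$. For the $\tau$-form, your direct substitution $W_3\tau=i/(\sqrt3\,r)$ is slightly more self-contained than the paper's appeal to Lemma~\ref{lem:fricke-u}, and you also make explicit the use of the bijectivity from Lemma~\ref{lem:endpoint-r}, which the paper leaves implicit.
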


\begin{proof}
By Theorem~\ref{thm:r-ratio}, $r(1-u)=N(1-u)/D(1-u)=D(u)/N(u)=1/r(u)$, so the prevertex of $P_{1/r}$ is $1-u(r)$. The integrand $w^{-2/3}(1-w)^{-2/3}$ is invariant under $w\mapsto1-w$, whence $\int_{1-u}^1=\int_0^{u}$ and therefore, by Theorem~\ref{thm:cardy-parallelogram}(i),
\[
\Pi_h^{\mathrm{par}}(1/r)+\Pi_h^{\mathrm{par}}(r)\;=\;\frac{1}{B(\tfrac13,\tfrac13)}\left(\int_0^{u}+\int_{u}^1\right)w^{-2/3}(1-w)^{-2/3}\,dw\;=\;1 .
\]
The second form follows from Lemma~\ref{lem:fricke-u}, since $\tau=ir/\sqrt3$ turns $r\mapsto1/r$ into $\tau\mapsto-1/(3\tau)$.
\end{proof}

Geometrically, rescaling $P_r$ by $1/r$ produces a $\pi/3$ parallelogram congruent to $P_{1/r}$ with the roles of the two pairs of opposite sides interchanged; the horizontal crossing of $P_{1/r}$ therefore corresponds to the crossing of $P_r$ between the other pair of sides, whose probability is $1-\Pi_h^{\mathrm{par}}(r)$. Only at the fixed point $r=1$ is $P_r$ a rhombus, self-similar under this exchange; there $\tau=i/\sqrt3$, $v=\tfrac12$ and $\Pi_h^{\mathrm{par}}=\tfrac12$.

\subsection{The standard hauptmodul and the eta quotient}

A classical hauptmodul for $\Gamma_0(3)$ is the eta quotient
\[
t_3(\tau):=\left(\frac{\eta(\tau)}{\eta(3\tau)}\right)^{12}=q^{-1}-12+54q-\cdots,\qquad q=e^{2\pi i\tau},
\]
which is holomorphic and nonvanishing on $\mathbb H$, has a simple pole at the cusp $i\infty$ and a simple zero at the cusp $0$; the eta-quotient hauptmoduln for the genus-zero groups $\Gamma_0(N)$ are classical, see~\cite[\S3.1]{Maier} or~\cite{Köhler}. The Fricke involution acts on it by
\begin{equation}
\label{eq:t3-fricke}
t_3\!\left(-\frac{1}{3\tau}\right)\;=\;\frac{3^6}{t_3(\tau)},
\end{equation}
which follows at once from $\eta(-1/\sigma)=\sqrt{-i\sigma}\,\eta(\sigma)$ applied with $\sigma=3\tau$ and with $\sigma=\tau$.

\begin{lemma}[Explicit form of the hauptmoduln]
\label{lem:u-v-explicit}
For the hauptmoduln $u(\tau)$, $v(\tau)=1-u(\tau)$ of Definition~\ref{def:u-tau},
\[
u(\tau)\;=\;\frac{t_3(\tau)}{t_3(\tau)+27},\qquad
v(\tau)\;=\;\frac{27}{t_3(\tau)+27}\;=\;\frac{27\,\eta(3\tau)^{12}}{\eta(\tau)^{12}+27\,\eta(3\tau)^{12}}.
\]
\end{lemma}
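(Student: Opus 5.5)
Since both $u$ and $t_3$ are hauptmoduln for $\Gamma_0(3)$ (Corollary~\ref{cor:u-v-hauptmodul} and the classical fact recalled above), they are related by a M\"obius transformation: the function field of the genus-zero curve $X_0(3)$ is $\mathbb C(u)=\mathbb C(t_3)$, and a degree-one map $X_0(3)\to\mathbb P^1$ is determined by its values at three points. So I would write $u=\frac{a\,t_3+b}{c\,t_3+d}$ and pin down the coefficients by matching values at the three distinguished points of $X_0(3)$: the cusps $i\infty$ and $0$, and the elliptic point $\rho$.

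At the cusp $i\infty$, $t_3$ has a simple pole while $u(i\infty)=1$ by Proposition~\ref{prop:schwarz-triangle}. At the cusp $0$, $t_3$ has a simple zero while $u(0)=0$. Hence $u=t_3/(t_3+\kappa)$ for a single constant $\kappa$, and it remains to show $\kappa=27$.

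To find $\kappa$ I would use the Fricke symmetry instead of evaluating at $\rho$. Lemma~\ref{lem:fricke-u} gives $u(W_3\tau)=1-u(\tau)$, and \eqref{eq:t3-fricke} gives $t_3(W_3\tau)=3^6/t_3(\tau)$. Substituting,
\[
\frac{3^6/t_3}{3^6/t_3+\kappa}=\frac{3^6}{3^6+\kappa t_3}
\]
must equal $1-\frac{t_3}{t_3+\kappa}=\frac{\kappa}{t_3+\kappa}$ identically. Cross-multiplying gives $3^6 t_3+3^6\kappa=3^6\kappa+\kappa^2 t_3$, so $\kappa^2=3^6$ and $\kappa=\pm27$.

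The sign is the main point needing care, and I would fix it from positivity on the imaginary axis. For $\tau=iy$ we have $q\in(0,1)$, so $\eta(\tau)$ and $\eta(3\tau)$ are positive and hence $t_3(iy)>0$. Since $u(iy)\in(0,1)$, this rules out $\kappa=-27$: the expression $t_3/(t_3-27)$ is either negative or larger than $1$. Therefore $\kappa=27$. As a consistency check, the elliptic point $\rho$ should map to $t_3(\rho)=-27$, matching $u(\rho)=\infty$.

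The formula for $v=1-u=27/(t_3+27)$ and its eta-quotient form then follow by multiplying numerator and denominator by $\eta(3\tau)^{12}$.

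The only genuinely delicate step is justifying that $u$ is a M\"obius function of $t_3$. This rests on both being degree-one functions on $X_0(3)$, which I take from Corollary~\ref{cor:u-v-hauptmodul} and the classical theory. It also needs their zeros and poles to be placed at the cusps as stated; for $u$ this comes from Proposition~\ref{prop:schwarz-triangle}.
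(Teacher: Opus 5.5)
Your proposal is correct and follows the paper's proof step for step. You relate the two hauptmoduln by a M\"obius map, use the cusp values to reduce to $u=t_3/(t_3+k)$, apply the Fricke symmetry to get $k^2=3^6$, and use positivity on the imaginary axis to select $k=27$; your closing check that $t_3(\rho)=-27$ is a correct extra consistency observation that the paper does not need.
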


\begin{proof}
Both $u$ and $t_3$ are hauptmoduln for $\Gamma_0(3)$ (Corollary~\ref{cor:u-v-hauptmodul}), so they are related by a M\"obius transformation: $u=(at_3+b)/(ct_3+d)$ with $ad-bc\ne0$. As $\tau\to i\infty$ we have $t_3\to\infty$ and, by Proposition~\ref{prop:schwarz-triangle}, $u\to1$; hence $a/c=1$. As $\tau\to0$ we have $t_3\to0$ by~\eqref{eq:t3-fricke} and $u\to0$; hence $b/d=0$, i.e.\ $b=0$. Therefore
\[
u=\frac{t_3}{t_3+k},\qquad k:=d/c\ne0 .
\]
It remains to determine $k$; for this we use the Fricke symmetry. Combining Lemma~\ref{lem:fricke-u} with~\eqref{eq:t3-fricke},
\[
1-\frac{t_3}{t_3+k}\;=\;1-u(\tau)\;=\;u(W_3\tau)\;=\;\frac{3^6/t_3}{3^6/t_3+k}\;=\;\frac{3^6}{3^6+k\,t_3},
\]
that is $k(3^6+k\,t_3)=3^6(t_3+k)$ identically in $\tau$, whence $k^2=3^6$ and $k=\pm27$. Since $u$ is real and lies in $(0,1)$ on the imaginary axis, where $t_3>0$, we must have $k>0$; thus $k=27$. The final expression follows from the definition of $t_3$.
\end{proof}

Because $\tau=ir/\sqrt3$ on the imaginary axis (Definition~\ref{def:tau}), Lemma~\ref{lem:u-v-explicit} identifies the geometric prevertex $u(r)$ of \S\ref{sec:SC} with the eta quotient~\eqref{eq:v-eta}, which is assertion~(ii) of Theorem~\ref{thm:cardy-parallelogram}. Together with the computation in \S\ref{subsec:hypergeometric} this completes the proof of that theorem.

\begin{remark}[The constant $27$]
\label{rem:27}
The constant $27$ admits a second, independent derivation from the cusp asymptotics~\eqref{eq:cusp-asymptotics}, which give $1-u=\varepsilon e^{i(\theta-\pi)}=27\,e^{2\pi i\tau}+O(q^2)$; that is, $v=27q+O(q^2)$, in agreement with $v=27/(t_3+27)$ and $t_3=q^{-1}+O(1)$. In this derivation the constant arises as $27=\exp\bigl(2\psi(1)-\psi(\tfrac13)-\psi(\tfrac23)\bigr)=e^{3\log 3}$, a digamma evaluation.
\end{remark}

\subsection{Cubic theta functions}

To pass from the hauptmodul to the eta \emph{product} $\eta(\tau)^2\eta(3\tau)^2$ in \S\ref{sec:proof-main} we use the cubic theta functions of Borwein and Borwein~\cite{Borwein}.

\begin{definition}[Borwein cubic theta functions]
\label{def:borwein-theta}
For $\tau\in\mathbb H$ and $q=e^{2\pi i\tau}$, set
\begin{align*}
a(\tau) &\;=\;\sum_{m,n\in\mathbb Z} q^{m^2+mn+n^2}, \\
b(\tau) &\;=\;\sum_{m,n\in\mathbb Z} \omega^{m-n}\,q^{m^2+mn+n^2}\qquad (\omega = e^{2\pi i/3}), \\
c(\tau) &\;=\;\sum_{m,n\in\mathbb Z} q^{(m+1/3)^2+(m+1/3)(n+1/3)+(n+1/3)^2}.
\end{align*}
\end{definition}

\begin{lemma}[Borwein identities]
\label{lem:borwein-identities}
The cubic theta functions satisfy the AGM-type identity
\begin{equation}
\label{eq:borwein-cubic}
a(\tau)^3 \;=\; b(\tau)^3 + c(\tau)^3,
\end{equation}
the eta-quotient expressions
\begin{equation}
\label{eq:borwein-eta-quot}
b(\tau) \;=\; \frac{\eta(\tau)^3}{\eta(3\tau)}, \qquad c(\tau) \;=\; 3\,\frac{\eta(3\tau)^3}{\eta(\tau)},
\end{equation}
and the hypergeometric identity
\begin{equation}
\label{eq:borwein-hyp}
G\!\left(\frac{c(\tau)^3}{a(\tau)^3}\right) \;=\; a(\tau),\qquad G(z)={}_2F_1(\tfrac13,\tfrac23;1;z).
\end{equation}
In particular $b(\tau)\,c(\tau)=3\,\eta(\tau)^2\eta(3\tau)^2$.
\end{lemma}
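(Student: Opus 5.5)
The plan is to derive all four assertions from the eta-product expressions for $b$ and $c$, using the rigidity of modular forms on $\Gamma_0(3)$. The Fricke symmetry and the hauptmodul $t_3$ already established will organize the argument.

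\emph{Product formulas.} The identities~\eqref{eq:borwein-eta-quot} are proved first. For $b$, write $m^2+mn+n^2$ as a norm form in $\mathbb Z[\omega]$ and sum over one index as a theta series. Jacobi's triple product then collapses the character-twisted sum $\sum\omega^{m-n}q^{Q(m,n)}$ to $\prod(1-q^n)^3/(1-q^{3n})$, which equals $\eta(\tau)^3/\eta(3\tau)$ since $q^{3/24-3/24}=1$.

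For $c$ the same decomposition gives $3q^{1/3}\prod(1-q^{3n})^3/(1-q^n)$. The leading exponent is $\tfrac13\cdot3\cdot\tfrac13$, which matches $3\eta(3\tau)^3/\eta(\tau)$ with $q^{9/24-1/24}=q^{1/3}$. The product $b\,c=3\eta(\tau)^2\eta(3\tau)^2$ is then immediate.

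If the triple-product manipulation becomes messy, a backup is available. Both sides of each identity are weight-one forms on $\Gamma_1(3)$ with character, and I would compare a few $q$-coefficients against a Sturm-type bound.

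\emph{Cubic identity.} To prove~\eqref{eq:borwein-cubic}, I would show that $a$ is a weight-one form on $\Gamma_0(3)$ with the same character as $b$ and $c$. This holds because $a$ is the theta series of the Eisenstein lattice, of discriminant $-3$.

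Then $b^3/c^3=t_3/27$ is a hauptmodul ratio. The function $a^3/c^3$ is $\Gamma_0(3)$-invariant and holomorphic on $\mathbb H$, with a pole only at $i\infty$, where $c^3\sim27q$. Its expansion is $\tfrac1{27}(q^{-1}+15+\dots)$, while $t_3/27=\tfrac1{27}(q^{-1}-12+\dots)$. So $a^3/c^3-b^3/c^3$ is a bounded modular function, hence constant, and the constant is $\tfrac{15+12}{27}=1$. That is the claimed cubic identity.

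\emph{Hypergeometric identity.} For~\eqref{eq:borwein-hyp}, the cubic identity gives $c^3/a^3=27/(t_3+27)=v(\tau)$ by Lemma~\ref{lem:u-v-explicit}. It therefore suffices to show $G(v(\tau))=a(\tau)$.

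By Theorem~\ref{thm:r-ratio} and Definition~\ref{def:tau}, $G(v)=G(1-u)=D(u)/B$, viewed as a function of $\tau$. The standard Schwarz-map theory says that a solution of~\eqref{eq:hgeo-ODE}, pulled back along the inverse of the Schwarz map, is a weight-one modular form up to a multiplier. Concretely, $N=\tfrac{\sqrt3}{i}\tau D$, and the monodromy computation of Proposition~\ref{prop:monodromy-group} shows that $D(u(\tau))$ transforms with weight one under $\Gamma_0(3)$.

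The ratio $D(u(\tau))/(B\,a(\tau))$ is then a weight-zero function. I would check that it is holomorphic at the cusps and at $\rho$, so that it is constant, and fix the constant at $i\infty$, where $v\to0$ and both sides tend to $1$.

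\emph{Main obstacle.} The hard step is this last one: controlling the multiplier and showing there are no zeros or poles of $D(u(\tau))$ at the elliptic point $\rho$ and at the cusp $0$. At $\rho$ I would use the local exponents $(\tfrac13,\tfrac23)$ at $u=\infty$ together with the triple ramification of $u$ at $\rho$. This should make $D(u(\tau))$ holomorphic and nonzero there, matching $a(\rho)\neq0$.

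As a fallback, I would verify the identity on the positive imaginary axis, where both sides are real-analytic. There I would compare $q$-expansions to sufficient order, which is a finite check once both sides are known to be weight-one forms on $\Gamma_0(3)$ with character, since that space is one-dimensional.
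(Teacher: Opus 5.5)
Your route differs from the paper's. The paper proves nothing here: it cites Borwein--Borwein (Eq.~(2.3) and Thm.~2.3(a)) for the cubic and hypergeometric identities, and BBG/K\"ohler for the eta quotients. You instead derive the cubic identity from the hauptmodul $t_3$ by a two-term $q$-expansion check, and your constant terms $15$ and $-12$ are right. You derive the hypergeometric identity from the paper's own Schwarz-map analysis. This is not circular, since Lemma~\ref{lem:u-v-explicit} precedes and does not use the Borwein identities. It reduces the external input to the eta-product formulas and the modularity of the $A_2$ theta series $a\in M_1(\Gamma_0(3),\chi_{-3})$. The strategy is viable, but two of your claims are false, and the second sits exactly on the step you call the crux.

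\emph{First error: $b$ and $c$ individually.} They are not weight-one forms on $\Gamma_0(3)$ or $\Gamma_1(3)$ with a Dirichlet character. One has $c(\tau+1)=e^{2\pi i/3}c(\tau)$. Also $b$ fails Ligozat's cusp-$0$ condition ($\sum_\delta(3/\delta)r_\delta=8$), and indeed $b|_1L^{-1}=e^{2\pi i/3}\,b$. Only the cubes $a^3,b^3,c^3$ lie in $M_3(\Gamma_0(3),\chi_{-3})$. That is all your cubic-identity argument actually uses, so state it that way. Your Sturm-bound backup for the eta quotients, however, would have to be run at level $9$, on $b(\tau)$ and $c(3\tau)$. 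The Jacobi-triple-product ``collapse'' is also not a one-line step, so that part is effectively a citation, as in the paper.

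\emph{Second error: $a(\rho)=0$.} Once the cubic identity holds, $u=1-v=b^3/a^3$. Here $u$ has its pole at $\rho$ while $b$ is zero-free on $\mathbb H$, so $a^3=b^3/u$ vanishes at $\rho$. It vanishes to order exactly $3$, since $1/u$ is a local coordinate of $X_0(3)$ at this order-$3$ elliptic point; hence $a$ has a simple zero there. Likewise, the exponents $(\tfrac13,\tfrac23)$ at $u=\infty$ combined with the triple ramification give $D(u(\tau))=O(\tau-\rho)$: holomorphic, but vanishing. So the step ``show $D\circ u$ has no zero at $\rho$, matching $a(\rho)\neq0$'' would fail as planned.

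It is repairable. Since $\mathrm{ord}_\rho(D\circ u)\ge1=\mathrm{ord}_\rho a$, the ratio is still holomorphic at $\rho$. Cleaner still is to drop the ratio:
\begin{itemize}
\item[(1)] Lemma~\ref{lem:connection} gives the local monodromies of $(\tfrac{i}{\sqrt3}N,D)$ as exactly $T$ and $L^{-1}$, unipotent with no scalar factor. So the linear monodromy is $\langle T,L\rangle=\Gamma_1(3)$, and $\phi:=D\circ u$ satisfies $\phi(\gamma\tau)=(c\tau+d)\phi(\tau)$ there. Equivalently, $\phi$ has character $\chi_{-3}$ on $\Gamma_0(3)$; this settles your multiplier worry.
\item[(2)] $\phi$ is holomorphic at $\rho$ by positivity of the exponents, and $\phi\to B$ at $i\infty$.
\item[(3)] At the cusp $0$, $\phi(\tau)\sim iB/(\sqrt3\,\tau)$ blows up as a function. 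Holomorphy there must be read off from $\phi|_1W_3\to-iB$, not from $D(u)$ near $u=0$.
\end{itemize}
Hence $\phi\in M_1(\Gamma_0(3),\chi_{-3})=\mathbb C\,a$, and comparing constant terms gives $\phi=B\,a$, that is, $G(v)=a$ on the imaginary axis.
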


\begin{proof}
In~\cite{Borwein} the functions $a,b,c$ are defined through the single theta series $L(q)=\sum_{m,n}q^{m^2+mn+n^2}$ by $a=L(q)$, $b=[3L(q^3)-L(q)]/2$, $c=[L(q^{1/3})-L(q)]/2$, and the series representations of Definition~\ref{def:borwein-theta} are derived in the proof of Theorem~2.3 there. With those definitions, the cubic identity~\eqref{eq:borwein-cubic} is~\cite[Eq.~(2.3)]{Borwein} (the ``basic cubic identity''), and the hypergeometric identity~\eqref{eq:borwein-hyp} is~\cite[Thm.~2.3(a)]{Borwein}. The eta-quotient expressions~\eqref{eq:borwein-eta-quot} do not appear in~\cite{Borwein}; see~\cite{BBG} or~\cite{Köhler}. The last assertion is the product of the two formulas in~\eqref{eq:borwein-eta-quot}.
\end{proof}

\begin{lemma}[Cubic-theta expression for $u,v$]
\label{lem:u-v-theta}
For $u(\tau)$ and $v(\tau)$ as in Definition~\ref{def:u-tau}, the identities
\[
u(\tau)=\frac{b(\tau)^3}{a(\tau)^3},\qquad v(\tau)=\frac{c(\tau)^3}{a(\tau)^3}
\]
hold on all of $\mathbb H$, and
\[
G\bigl(v(\tau)\bigr)=a(\tau)\qquad\text{for }\tau\in i\mathbb R_{>0},
\]
where both sides are real-analytic and positive (on the axis $v(\tau)\in(0,1)$, so the principal branch of $G$ applies).
\end{lemma}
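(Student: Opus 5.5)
The plan is to reduce everything to the eta-quotient formula of Lemma~\ref{lem:u-v-explicit} together with the Borwein identities of Lemma~\ref{lem:borwein-identities}.

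\emph{Step 1: the formula for $v$.} By \eqref{eq:borwein-eta-quot},
\[
\frac{c^3}{b^3}=\frac{27\,\eta(3\tau)^9/\eta(\tau)^3}{\eta(\tau)^9/\eta(3\tau)^3}=\frac{27\,\eta(3\tau)^{12}}{\eta(\tau)^{12}}=\frac{27}{t_3(\tau)}.
\]
The cubic identity \eqref{eq:borwein-cubic} then gives
\[
\frac{c^3}{a^3}=\frac{c^3}{b^3+c^3}=\frac{27/t_3}{1+27/t_3}=\frac{27}{t_3+27}=v(\tau).
\]
This uses Lemma~\ref{lem:u-v-explicit}. The formula $u=1-v=b^3/a^3$ follows by the same identity.

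\emph{Step 2: validity on all of $\mathbb H$.} These are identities of meromorphic functions on $\mathbb H$, so they hold wherever both sides are defined. To see that $a$ has no zeros, note that $a^3=b^3+c^3$ and
\[
b^3+c^3=\frac{\eta(\tau)^{12}+27\eta(3\tau)^{12}}{\eta(\tau)^3\eta(3\tau)^3}.
\]
This can vanish only where $t_3=-27$, i.e.\ at the poles of $u$ and $v$ (the elliptic orbit of $\rho$). There both sides of each identity have poles, so the identities remain correct as meromorphic identities. Away from those points everything is holomorphic.

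\emph{Step 3: $G(v(\tau))=a(\tau)$ on the axis.} Substitute Step~1 into \eqref{eq:borwein-hyp}, which gives $G(c^3/a^3)=a$. This is the step I expect to need care, because the content lies in the branch: on $\tau=iy$ we have $q\in(0,1)$, so the series make $a$, $b$, $c$ real. The eta products are positive there, so $b,c>0$ and hence $a^3=b^3+c^3>0$, giving $a>0$. Then $v=c^3/a^3\in(0,1)$, which is consistent with Proposition~\ref{prop:schwarz-triangle}, so the principal branch of $G$ (its power series, convergent on the unit disc) applies.

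To be safe about which branch Borwein's identity \eqref{eq:borwein-hyp} uses, I would check it directly for small $q$. As $y\to\infty$ we have $v\to0$ and $a\to1$, and Borwein's theorem is proved as an identity of $q$-series near $q=0$ with the principal series branch. By analytic continuation along the axis, where $v$ stays in $(0,1)$ and never hits the branch point $1$, the identity persists for all $y>0$. Real-analyticity and positivity then follow, since $G>0$ on $(0,1)$ and $v$ is real-analytic in $y$.
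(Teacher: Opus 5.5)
Your proposal is correct and follows the paper's proof essentially step for step: you compute $c^3/b^3=27/t_3$ from the eta quotients, combine it with the cubic identity and Lemma~\ref{lem:u-v-explicit} to get $v=c^3/a^3$ and $u=b^3/a^3$ as meromorphic identities, and then continue Borwein's hypergeometric identity from the cusp along the imaginary axis, where $v\in(0,1)$. Your extra checks are harmless refinements of the same argument: you locate the zeros of $a$ at $t_3=-27$, and you obtain $a>0$ and $v\in(0,1)$ directly from positivity of the eta products rather than citing Theorem~\ref{thm:cardy-parallelogram}.
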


\begin{proof}
By~\eqref{eq:borwein-eta-quot}, $b^3/c^3=\eta(\tau)^{12}/\bigl(27\,\eta(3\tau)^{12}\bigr)=t_3/27$. Hence, using~\eqref{eq:borwein-cubic},
\[
\frac{c^3}{a^3}=\frac{c^3}{b^3+c^3}=\frac{1}{1+b^3/c^3}=\frac{27}{t_3+27}=v(\tau)
\]
by Lemma~\ref{lem:u-v-explicit}; both sides are meromorphic on $\mathbb H$, so the identity holds there, and consequently $b^3/a^3=1-v=u$. The final identity is~\eqref{eq:borwein-hyp}, valid wherever $c^3/a^3$ lies in the disc of convergence and extended along the axis by real-analytic continuation: on $i\mathbb R_{>0}$ one has $v\in(0,1)$ by Theorem~\ref{thm:cardy-parallelogram} and Lemma~\ref{lem:u-v-explicit}, and $a>0$, so both sides are positive real-analytic functions there. (For $\tau$ away from the axis, $v(\tau)$ may cross the branch cut $[1,\infty)$ of $G$ and the identity is then only local; only the identity on the axis is used in the sequel.)
\end{proof}

\section{Proof of Theorem~\ref{thm:main-almost-modular}}
\label{sec:proof-main}

With explicit formulas for the hauptmoduln $u, v$ in the uniformizing coordinate $\tau$ of Definition~\ref{def:tau}, the proof of Theorem~\ref{thm:main-almost-modular} reduces to differentiating $\Pi_h^{\mathrm{par}}$ in $\tau$ and identifying the result.

Recall from \S\ref{sec:monodromy} the notation $G(z)={}_2F_1(\tfrac13,\tfrac23;1;z)$ and $B=B(\tfrac13,\tfrac23)=2\pi/\sqrt3$, so that the period integrals of Definition~\ref{def:AB} satisfy $N(u)=B\,G(u)$ and $D(u)=B\,G(1-u)$.

\begin{lemma}[Derivative of the hauptmodul]
\label{lem:dvdtau}
For $\tau\in i\mathbb R_{>0}$, where $v=v(\tau)\in(0,1)$, and with $G$ as above,
\[
\frac{dv}{d\tau} \;=\; 2\pi i\,v(1-v)\,G(v)^2.
\]
\end{lemma}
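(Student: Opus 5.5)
The plan is to differentiate the inverse relation rather than $v$ itself. On the imaginary axis, Definition~\ref{def:tau} together with Lemma~\ref{lem:fricke-u} gives $\tau$ as a function of $v$. Since $N(u)=B\,G(u)$ and $D(u)=B\,G(1-u)$, with $u=1-v$ we have
\[
\tau=\frac{i}{\sqrt3}\,\frac{G(1-v)}{G(v)} .
\]
This is a real-analytic bijection between $v\in(0,1)$ and $i\mathbb R_{>0}$, and its inverse is $v(\tau)$ by Lemma~\ref{lem:endpoint-r}. So it suffices to compute $d\tau/dv$ and invert it.

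Differentiating the quotient gives
\[
\frac{d\tau}{dv}=-\frac{i}{\sqrt3}\,\frac{G'(1-v)G(v)+G(1-v)G'(v)}{G(v)^2}=-\frac{i}{\sqrt3}\,\frac{\mathcal W(v)}{G(v)^2},
\]
where $\mathcal W(v):=G(v)G'(1-v)+G'(v)G(1-v)$ equals, up to sign, the Wronskian of the two solutions $G(v)$ and $G(1-v)$ of~\eqref{eq:hgeo-ODE}. The ODE is in the form $(v(1-v)F')'-\tfrac29F=0$, so by Abel's theorem $v(1-v)\,\mathcal W(v)=C$ is constant. The identity then reduces to determining $C$. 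If $C=\sqrt3/(2\pi)$, then
\[
\frac{d\tau}{dv}=-\frac{i}{2\pi}\,\frac{1}{v(1-v)G(v)^2},
\]
and inverting gives $dv/d\tau=2\pi i\,v(1-v)G(v)^2$, as claimed.

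The main step is evaluating $C$, which I would do by letting $v\to0^+$. There $G(v)\to1$ and $vG'(v)\to0$, while $G'(1-v)$ is governed by the logarithmic connection formula of Lemma~\ref{lem:connection}. That lemma gives $G(1-v)=-\frac{\sqrt3}{2\pi}G(v)\log v+\widetilde A(v)/B$, hence
\[
G'(1-v)\sim\frac{\sqrt3}{2\pi}\,\frac{1}{v}.
\]
Multiplying by $v(1-v)$ and letting $v\to0$ yields $C=\frac{\sqrt3}{2\pi}\cdot1$. The terms $vG'(v)G(1-v)=O(v\log v)$ vanish in the limit, so the only care needed is the sign convention in the derivative $G'(1-v)$ with respect to its argument.

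As a cross-check, the result agrees with the cusp expansion $v=27q+O(q^2)$ of Remark~\ref{rem:27}: at leading order $dv/d\tau\approx2\pi i\,v$.
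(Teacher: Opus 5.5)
Your proposal is correct and is essentially the paper's argument: Abel's identity makes the Wronskian of the two period solutions equal to a constant over $u(1-u)$, the constant is fixed by the logarithmic connection formula at the cusp $u=1$ (your $v\to0^+$), and the result is then inverted. The differences are cosmetic. You parametrize by $v$ and normalize by $G$ rather than $N,D$, so the constant appears as $\sqrt3/(2\pi)$ instead of $W_0=B$. Also, Lemma~\ref{lem:fricke-u} is not actually needed to write $\tau=\frac{i}{\sqrt3}G(1-v)/G(v)$, which follows directly from Definition~\ref{def:tau} with $u=1-v$.
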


\begin{proof}
From $\tau(u) = \tfrac{i}{\sqrt 3}\,N(u)/D(u)$,
\[
\frac{d\tau}{du} \;=\; \frac{i}{\sqrt 3}\,\frac{W(u)}{D(u)^2}, \qquad W(u) \;:=\; N'(u)\,D(u) - N(u)\,D'(u).
\]
Both $N$ and $D$ solve~\eqref{eq:hgeo-ODE}, which in normalized form reads $F''+p\,F'+q\,F=0$ with $p(u)=(1-2u)/(u(1-u))$. The Abel--Liouville formula for the Wronskian of two solutions of such an equation, namely $W'=-p\,W$, gives $W'/W=-(1-2u)/(u(1-u))$ and hence
\[
W(u)\;=\;\frac{W_0}{u(1-u)}\qquad\text{for a constant }W_0 .
\]
To evaluate $W_0$ we let $u\to1^-$ and use the connection formula of Lemma~\ref{lem:connection}, $N=-\tfrac{\sqrt3}{2\pi}D\log(1-u)+A$ with $A$ holomorphic at $u=1$. Differentiating,
\[
N'=\frac{\sqrt3}{2\pi}\,\frac{D}{1-u}-\frac{\sqrt3}{2\pi}\,D'\log(1-u)+A',
\]
so that in $W=N'D-ND'$ the two logarithmic terms cancel and
\[
W(u)\;=\;\frac{\sqrt3}{2\pi}\cdot\frac{D(u)^2}{1-u}\;+\;A'(u)D(u)-A(u)D'(u).
\]
The last two terms are bounded near $u=1$, while $D(1)=B$; multiplying by $u(1-u)$ and letting $u\to1^-$ therefore gives
\[
W_0\;=\;\frac{\sqrt3}{2\pi}\,B^2\;=\;\frac{\sqrt3}{2\pi}\cdot\frac{4\pi^2}{3}\;=\;\frac{2\pi}{\sqrt3}\;=\;B ,
\]
using the Euler reflection identity $B=B(\tfrac13,\tfrac23)=\Gamma(\tfrac13)\Gamma(\tfrac23)=\pi/\sin(\pi/3)=2\pi/\sqrt3$ and hence $B^2=4\pi^2/3$. (In particular $W_0>0$, consistent with the fact that $\tau=ir(u)/\sqrt3$ ascends the imaginary axis as $u$ increases, $r$ being increasing by Lemma~\ref{lem:endpoint-r}.) Substituting $D(u)=B\,G(1-u)$,
\[
\frac{d\tau}{du} \;=\; \frac{i}{\sqrt 3}\cdot\frac{W_0}{u(1-u)\,D(u)^2} \;=\; \frac{i}{2\pi}\cdot\frac{1}{u(1-u)\,G(1-u)^2}.
\]
Since $dv/du = -1$, $u(1-u)=v(1-v)$ and $G(1-u) = G(v)$, the chain rule gives $dv/d\tau=-\bigl(d\tau/du\bigr)^{-1}=2\pi i\,v(1-v)G(v)^2$.
\end{proof}

Part~(ii) of the next theorem uses the following standard language. For $\gamma=\left(\begin{smallmatrix}a&b\\c&d\end{smallmatrix}\right)\in\mathrm{GL}_2^+(\mathbb R)$ and $k\in\mathbb Z$, the \emph{weight-$k$ slash operator} is
\[
(g|_k\gamma)(\tau)\;:=\;\det(\gamma)^{k/2}\,(c\tau+d)^{-k}\,g(\gamma\tau).
\]
A holomorphic function $g$ on $\mathbb H$ is a \emph{weight-$k$ cusp form on $\Gamma_0(3)$ with multiplier system $v$}---where $v:\Gamma_0(3)\to\mathbb C^\times$ is a map of modulus one such that $g|_k\gamma=v(\gamma)\,g$ for all $\gamma\in\Gamma_0(3)$---if in addition $g$ vanishes at the two cusps of $\Gamma_0(3)$, meaning $g(\tau)\to0$ and $(g|_kW_3)(\tau)\to0$ as $\Im\tau\to\infty$. The multiplier system $v$ has \emph{order $3$} if $v^3\equiv1$.

\begin{theorem}[Modular formula for the derivative]
\label{thm:modular-derivative}
Write $\widetilde\Pi_h^{\mathrm{par}}(\tau) := \Pi_h^{\mathrm{par}}(-i\sqrt 3\,\tau)$ for $\tau\in i\mathbb R_{>0}$, as in Corollary~\ref{cor:fricke-Pi}. Then the following hold.
\begin{enumerate}
\item[\textup{(i)}] $\widetilde\Pi_h^{\mathrm{par}}$ extends to a holomorphic function on $\mathbb H$, again denoted $\widetilde\Pi_h^{\mathrm{par}}$, whose derivative is a constant multiple of the eta product $\eta(\tau)^2\eta(3\tau)^2$:
\[
\frac{d\widetilde\Pi_h^{\mathrm{par}}}{d\tau} \;=\; c_\eta\,\eta(\tau)^2\eta(3\tau)^2 \quad\text{on }\mathbb H, \qquad c_\eta \;=\; \frac{12\pi^2 i}{\sqrt 3\,\Gamma(\tfrac13)^3}.
\]
\item[\textup{(ii)}] The function $\eta(\tau)^2\eta(3\tau)^2$ is a weight-$2$ cusp form on $\Gamma_0(3)$ with respect to a multiplier system $v_\eta$ of order $3$, with $v_\eta(T) = e^{2\pi i/3}$; under the Fricke involution it satisfies
\[
\bigl(\eta^2\eta(3\cdot)^2\bigr)\big|_2 W_3 \;=\; -\,\eta^2\eta(3\cdot)^2,\qquad\text{i.e.}\qquad v_\eta(W_3)=-1.
\]
\end{enumerate}
In particular, $\widetilde\Pi_h^{\mathrm{par}}(\tau)=c_\eta\int_{i\infty}^{\tau}\eta(z)^2\eta(3z)^2\,dz$: up to a constant, the Cardy--Smirnov function is the integral of a weight-$2$ cusp form, an integral of Eichler type.
\end{theorem}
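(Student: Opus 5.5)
The plan is to compute the $\tau$-derivative of $\widetilde\Pi_h^{\mathrm{par}}$ on the imaginary axis by the chain rule, identify it with the eta product using the Borwein identities, and then extend holomorphically. By Theorem~\ref{thm:cardy-parallelogram}(i), writing $\widetilde\Pi_h^{\mathrm{par}}(\tau)=B(\tfrac13,\tfrac13)^{-1}\int_0^{v(\tau)}w^{-2/3}(1-w)^{-2/3}\,dw$ on $i\mathbb R_{>0}$, we get
\[
\frac{d\widetilde\Pi_h^{\mathrm{par}}}{d\tau}=\frac{1}{B(\tfrac13,\tfrac13)}\,v^{-2/3}(1-v)^{-2/3}\,\frac{dv}{d\tau}=\frac{2\pi i}{B(\tfrac13,\tfrac13)}\,v^{1/3}(1-v)^{1/3}\,G(v)^2
\]
by Lemma~\ref{lem:dvdtau}. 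Now Lemma~\ref{lem:u-v-theta} gives $v=c^3/a^3$, $1-v=u=b^3/a^3$ and $G(v)=a$ on the axis. On the axis $a,b,c$ are positive reals: $b$ and $c$ are positive eta quotients there, and $a$ is a positive theta series. So the real cube roots give $v^{1/3}(1-v)^{1/3}G(v)^2=(bc/a^2)\,a^2=bc=3\eta(\tau)^2\eta(3\tau)^2$. With $B(\tfrac13,\tfrac13)=\sqrt3\,\Gamma(\tfrac13)^3/(2\pi)$ this yields $c_\eta=6\pi i\cdot 2\pi/(\sqrt3\,\Gamma(\tfrac13)^3)=12\pi^2 i/(\sqrt3\,\Gamma(\tfrac13)^3)$, as claimed.

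For the extension in (i), note that $\eta(\tau)^2\eta(3\tau)^2$ is holomorphic on $\mathbb H$ and equals $q^{1/3}(1+O(q))$, so it decays exponentially as $\Im\tau\to\infty$. Hence $H(\tau):=c_\eta\int_{i\infty}^{\tau}\eta(z)^2\eta(3z)^2\,dz$ is well defined and holomorphic on the simply connected domain $\mathbb H$. On the axis, $H$ and $\widetilde\Pi_h^{\mathrm{par}}$ have the same derivative. Both tend to $0$ as $\tau\to i\infty$: for $\widetilde\Pi_h^{\mathrm{par}}$ this follows because $v\to0$ there ($v=27q+O(q^2)$, Remark~\ref{rem:27}), so the incomplete beta integral tends to $0$. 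Thus they agree on the axis, and $H$ is the desired extension. This gives (i) and the final Eichler-integral formula at once.

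For (ii), holomorphy and cuspidality at $i\infty$ come from the $q^{1/3}$ expansion. The Fricke law follows from $\eta(-1/\sigma)=\sqrt{-i\sigma}\,\eta(\sigma)$ applied with $\sigma=3\tau$ and $\sigma=\tau$:
\[
\eta(-1/(3\tau))^2\eta(-1/\tau)^2=(-3i\tau)(-i\tau)\,\eta(3\tau)^2\eta(\tau)^2=-3\tau^2\,\eta^2\eta(3\cdot)^2 .
\]
The slash factor $\det(W_3)\,(3\tau)^{-2}=3/(9\tau^2)$ then gives $-\eta^2\eta(3\cdot)^2$. Cuspidality at $0$ follows from this relation together with decay at $i\infty$. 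For $\Gamma_0(3)$, it suffices to check the generators $T$ and $L$ (Theorem~\ref{thm:monodromy-Gamma03}). The $T$-multiplier is $e^{2\pi i/3}$ from $q^{1/3}$. For $L=W_3^{-1}T^{-1}W_3$ up to sign (indeed $W_3TW_3^{-1}$ acts as $\tau\mapsto\tau/(1-3\tau)=L^{-1}$), the multiplier is the conjugate of the $T$-one, namely $v_\eta(L)=e^{2\pi i/3}$ or its inverse, and in any case a cube root of unity. Since the multiplier is a character on the generated group, agreeing on generators suffices to give $v_\eta^3\equiv1$. This is consistent with $f^3=\eta^6\eta(3\cdot)^6$, which is a genuine weight-6 form on $\Gamma_0(3)$.

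The main obstacle is the branch bookkeeping in the identity $v^{1/3}(1-v)^{1/3}G(v)^2=bc$. This is the reason to work only on the axis, where everything is positive and real, and to extend by integrating the globally holomorphic eta product rather than trying to continue $G(v)$, which would meet the branch cut. A secondary care point is the multiplier verification for $L$: rather than handle the eta transformation under a general matrix, I would rely on the conjugation by $W_3$.
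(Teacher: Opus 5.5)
Your proposal is correct and follows the paper's proof essentially step for step. Both arguments use the chain rule with Lemma~\ref{lem:dvdtau}, the cubic-theta substitution $v^{1/3}(1-v)^{1/3}G(v)^2=bc=3\eta(\tau)^2\eta(3\tau)^2$ on the positive axis, extension via the Eichler-type integral from $i\infty$, and the $T$- and $W_3$-laws from the $\eta$ transformation rules. The differences are minor: you get vanishing at the cusp from $v=27q+O(q^2)$ rather than from Lemma~\ref{lem:endpoint-r}, and you make the multiplier on the generator $L$ explicit via $L=W_3^{-1}T^{-1}W_3$, which gives $v_\eta(L)=e^{-2\pi i/3}$ and which the paper leaves implicit in ``the $\eta$ transformation rules''.
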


\begin{proof}
Work first on the axis $\tau\in i\mathbb R_{>0}$, where $u,v\in(0,1)$ and $a,b,c>0$. By Theorem~\ref{thm:cardy-parallelogram}(i), $\Pi_h^{\mathrm{par}}(r) = \frac{1}{B(\tfrac13,\tfrac13)} \int_u^1 w^{-2/3}(1-w)^{-2/3}\,dw$ with $v = 1 - u$. Differentiating with respect to $v$ (using $du = -dv$),
\[
\frac{d\Pi_h^{\mathrm{par}}}{dv} \;=\; \frac{1}{B(\tfrac13,\tfrac13)}\,v^{-2/3}(1-v)^{-2/3}.
\]
By the chain rule and Lemma~\ref{lem:dvdtau},
\[
\frac{d\widetilde\Pi_h^{\mathrm{par}}}{d\tau} \;=\; \frac{d\Pi_h^{\mathrm{par}}}{dv}\cdot\frac{dv}{d\tau} \;=\; \frac{2\pi i}{B(\tfrac13,\tfrac13)}\,v^{1/3}(1-v)^{1/3}\,G(v)^2\qquad(\tau\in i\mathbb R_{>0}).
\]
Substituting the cubic-theta expressions $u = b(\tau)^3/a(\tau)^3$, $v = c(\tau)^3/a(\tau)^3$, $G(v) = a(\tau)$ from Lemma~\ref{lem:u-v-theta}, and using positivity on the axis to take cube roots,
\[
v^{1/3}(1-v)^{1/3}\,G(v)^2 \;=\; \frac{c(\tau)}{a(\tau)}\cdot\frac{b(\tau)}{a(\tau)}\cdot a(\tau)^2 \;=\; b(\tau)\,c(\tau).
\]
By Lemma~\ref{lem:borwein-identities}, $b(\tau)\,c(\tau) = 3\,\eta(\tau)^2\eta(3\tau)^2$. Hence
\[
\frac{d\widetilde\Pi_h^{\mathrm{par}}}{d\tau} \;=\; \frac{6\pi i}{B(\tfrac13,\tfrac13)}\,\eta(\tau)^2\eta(3\tau)^2\qquad\text{on }i\mathbb R_{>0}.
\]
By the Euler reflection formula $\Gamma(\tfrac13)\Gamma(\tfrac23) = \pi/\sin(\pi/3) = 2\pi/\sqrt 3$,
\[
B(\tfrac13,\tfrac13) \;=\; \frac{\Gamma(\tfrac13)^2}{\Gamma(\tfrac23)} \;=\; \frac{\Gamma(\tfrac13)^3}{\Gamma(\tfrac13)\Gamma(\tfrac23)} \;=\; \frac{\sqrt 3\,\Gamma(\tfrac13)^3}{2\pi},
\]
so $1/B(\tfrac13,\tfrac13) = 2\pi/(\sqrt 3\,\Gamma(\tfrac13)^3)$ and $c_\eta = 6\pi i/B(\tfrac13,\tfrac13) = 12\pi^2 i/(\sqrt 3\,\Gamma(\tfrac13)^3)$.

For the extension, set $F(\tau):=c_\eta\int_{i\infty}^{\tau}\eta(z)^2\eta(3z)^2\,dz$, the integral being taken along any path in $\mathbb H$; since the integrand is holomorphic on $\mathbb H$ and decays like $q^{1/3}$ at the cusp, $F$ is a well-defined holomorphic function on $\mathbb H$ vanishing as $\Im\tau\to\infty$. On the axis, $F$ and $\widetilde\Pi_h^{\mathrm{par}}$ have equal derivatives by the identity just proved, and both tend to $0$ at $i\infty$: for $F$ this is immediate, and for $\widetilde\Pi_h^{\mathrm{par}}$ it follows since $u(r)\to1$ as $r\to\infty$ (Lemma~\ref{lem:endpoint-r}), so that $\Pi_h^{\mathrm{par}}(r)=\frac{1}{B(1/3,1/3)}\int_{u(r)}^1 w^{-2/3}(1-w)^{-2/3}\,dw\to0$. Hence the two functions agree on $i\mathbb R_{>0}$, and $F$ is the asserted holomorphic extension.

The claims in~(ii) follow from the $\eta$ transformation rules (see~\cite[\S1.2]{Diamond}). Under $T$,
\[
\eta(\tau)^2\eta(3\tau)^2 \longmapsto e^{2\pi i/12}\eta(\tau)^2 \cdot e^{6\pi i/12}\eta(3\tau)^2 = e^{2\pi i/3}\eta(\tau)^2\eta(3\tau)^2,
\]
giving $v_\eta(T) = e^{2\pi i/3}$. For $W_3$, the transformation $\eta(-1/\sigma) = \sqrt{-i\sigma}\,\eta(\sigma)$ (applied with $\sigma = 3\tau$ and with $\sigma = \tau$, and using $3\,W_3\tau = -1/\tau$) gives
\[
\eta(W_3\tau)^2\,\eta(3\,W_3\tau)^2 \;=\; \eta\!\left(-\tfrac{1}{3\tau}\right)^{\!2}\eta\!\left(-\tfrac{1}{\tau}\right)^{\!2} \;=\; (-3i\tau)(-i\tau)\,\eta(3\tau)^2\eta(\tau)^2 \;=\; -3\tau^2\,\eta(\tau)^2\eta(3\tau)^2,
\]
so that, applying the weight-$2$ slash operator $(g|_2 W_3)(\tau) = 3\,(3\tau)^{-2} g(W_3\tau)$,
\[
\big(\eta^2\eta(3\cdot)^2\big)\big|_2 W_3 \;=\; 3\,(3\tau)^{-2}\bigl(-3\tau^2\bigr)\eta(\tau)^2\eta(3\tau)^2 \;=\; -\,\eta(\tau)^2\eta(3\tau)^2,
\]
i.e.\ $v_\eta(W_3) = -1$.
\end{proof}

\begin{remark}[Geometric origin of the Fricke eigenvalue]
\label{rem:fricke-eigenvalue}
The eigenvalue $v_\eta(W_3)=-1$ can also be obtained without eta identities. By Corollary~\ref{cor:fricke-Pi} the identity $\widetilde\Pi_h^{\mathrm{par}}(W_3\tau)=1-\widetilde\Pi_h^{\mathrm{par}}(\tau)$ holds on $i\mathbb R_{>0}$; both sides are holomorphic on $\mathbb H$ by part~(i), so it holds on all of $\mathbb H$ by the identity theorem, and differentiating it recovers $\bigl(\eta^2\eta(3\cdot)^2\bigr)\big|_2W_3=-\,\eta^2\eta(3\cdot)^2$ directly. The minus sign thus reflects the geometric fact that rescaling $P_r$ by $1/r$ exchanges its two pairs of opposite sides.
\end{remark}

\begin{proof}[Proof of Theorem~\ref{thm:main-almost-modular}]
By Definition~\ref{def:tau}, $\tau = ir/\sqrt 3$ on the imaginary axis, so $d\tau/dr = i/\sqrt 3$ and $3\tau = i\sqrt 3\,r$. Combining with Theorem~\ref{thm:modular-derivative},
\[
\frac{d\Pi_h^{\mathrm{par}}}{dr} \;=\; \frac{d\Pi_h^{\mathrm{par}}}{d\tau}\cdot\frac{d\tau}{dr} \;=\; \frac{12\pi^2 i}{\sqrt 3\,\Gamma(\tfrac13)^3}\cdot\frac{i}{\sqrt 3}\,\eta\!\left(\tfrac{i\,r}{\sqrt 3}\right)^{\!2}\!\eta\!\left(i\sqrt 3\,r\right)^{\!2} \;=\; -\frac{4\pi^2}{\Gamma(\tfrac13)^3}\,\eta\!\left(\tfrac{i\,r}{\sqrt 3}\right)^{\!2}\!\eta\!\left(i\sqrt 3\,r\right)^{\!2}.
\]
We claim $\Pi_h^{\mathrm{par}}(r) \to 0$ as $r \to \infty$. Indeed, by Theorem~\ref{thm:cardy-parallelogram} and Lemma~\ref{lem:endpoint-r}, $\Pi_h^{\mathrm{par}}(r) = 1 - g(r) = 1 - \Phi_\Delta(u(r))$ where $u(r) \to 1$ as $r \to \infty$, so $\Phi_\Delta(u(r)) \to \Phi_\Delta(1) = 1$. Integrating the displayed identity from $r$ to $\infty$ therefore gives
\[
\Pi_h^{\mathrm{par}}(r) \;=\; -\int_r^\infty \frac{d\Pi_h^{\mathrm{par}}}{dt}\,dt \;=\; \frac{4\pi^2}{\Gamma(\tfrac13)^3}\int_r^\infty \eta\!\left(\tfrac{i\,t}{\sqrt 3}\right)^{\!2}\!\eta\!\left(i\sqrt 3\,t\right)^{\!2}\,dt,
\]
the claimed formula.
\end{proof}

\section{Uniqueness: proof of Theorem~\ref{thm:uniqueness}}
\label{sec:rigidity}

\subsection{Strategy}

The \emph{Fricke involution} at level $N$ is the matrix
\[
W_N=\begin{pmatrix}0&-1\\ N&0\end{pmatrix},\qquad\text{acting by}\quad W_N\tau=-\frac{1}{N\tau},
\]
and the \emph{Fricke extension} is
\[
\Gamma_0^+(N):=\langle\Gamma_0(N),W_N\rangle,
\]
where $\langle\,\cdot\,\rangle$ denotes the subgroup of $\mathrm{PSL}_2(\mathbb R)$ generated by the indicated elements; thus $\Gamma_0^+(N)$ is generated by $\Gamma_0(N)$ together with the Fricke involution $W_N$.
For a Fuchsian group $\Gamma\subset\mathrm{PSL}_2(\mathbb R)$ of finite covolume, commensurable with $\mathrm{PSL}_2(\mathbb Z)$, we write $M_k(\Gamma)$ for the space of holomorphic functions $g$ on $\mathbb H$ satisfying $g|_k\gamma=g$ for all $\gamma\in\Gamma$ and holomorphic at each cusp, and $S_k(\Gamma)\subseteq M_k(\Gamma)$ for the subspace vanishing at each cusp; this extends the usual definitions for congruence subgroups~\cite[\S1.2]{Diamond} to groups such as $\Gamma_0^+(3)$, which is not contained in $\mathrm{PSL}_2(\mathbb Z)$.

Let $F$ be a function satisfying hypotheses~(i)--(iii) of Theorem~\ref{thm:uniqueness}, and set $f := dF/d\tau$. The proof proceeds in four steps:
\begin{enumerate}
\item The $q$-expansion gives $f|_2 T = A\,f$ with $A := e^{2\pi i\alpha}$.
\item Differentiating the Fricke symmetry gives $f|_2 W_3 = -f$.
\item Combining (1) and (2), the symmetry $F(U\tau) = 1 - A\,F(\tau)$ for $U = W_3 T$ iterates as an affine map on $\mathbb C$. Since $U$ has finite order in $\mathrm{PSL}_2(\mathbb R)$, this affine map must also have finite order, forcing $A^6 = 1$ and hence $6\alpha \in \mathbb Z_{>0}$ with $\alpha \not\equiv \tfrac12 \pmod 1$. Steps (1) and (2) then upgrade to $f^6 \in S_{12}(\Gamma_0^+(3))$.
\item The nonvanishing hypothesis~(iii) forces $f^6$ to vanish to order exactly $2$ at the cusp, which by the valence formula pins down $\alpha = \tfrac13$, identifies $f^6$ as a scalar multiple of $\Delta_3^+ = (\eta(\tau)\eta(3\tau))^{12}$, and hence determines $f$ up to a scalar.
\end{enumerate}

The argument follows the structure of Kleban and Zagier's uniqueness proof~\cite[\S2]{KZ}, with two differences. First, the relevant group $\Gamma_0^+(3)$ is smaller than the $\mathrm{SL}_2(\mathbb Z)$ appearing in the rectangular case. Second, and more substantively, $\dim S_{12}(\Gamma_0^+(3)) = 2$ rather than $\dim S_{12}(\mathrm{SL}_2(\mathbb Z)) = 1$ (the latter spanned by $\Delta$). In the rectangular case the one-dimensionality of the cusp-form space leaves no room for an extraneous solution, and the Fricke symmetry and $q$-block ansatz suffice. At level $3$ the symmetry hypotheses fall exactly one condition short of determining the cusp form, and the deficiency is realized by a genuine second solution (Remark~\ref{rem:ghost}); the nondegeneracy condition~(iii) supplies the missing constraint.

\begin{figure}[htbp]
\centering
\begin{tikzpicture}[>=latex, node distance=3.0cm]
\node (q) [draw, rounded corners, align=center, minimum width=4.1cm, minimum height=1.0cm] {hypothesis (i): $q$-expansion\\[1pt] $\Rightarrow\ f|_2T=e^{2\pi i\alpha}f$};
\node (fricke) [draw, rounded corners, align=center, right of=q, xshift=2.4cm, minimum width=4.1cm, minimum height=1.0cm] {hypothesis (ii): Fricke\\[1pt] $\Rightarrow\ f|_2W_3=-f$};
\node (f6) [draw, rounded corners, align=center, below of=q, xshift=2.7cm, yshift=0.5cm, minimum width=6.6cm, minimum height=1.0cm] {$f^6\in S_{12}(\Gamma_0^+(3))=\mathbb C\,\Delta_1^+\oplus\mathbb C\,\Delta_3^+$,\\[1pt] $6\alpha\in\mathbb Z_{>0}$, $\ \alpha\not\equiv\tfrac12\ (\mathrm{mod}\ 1)$};
\node (eta) [draw, rounded corners, align=center, below of=f6, yshift=0.6cm, minimum width=5.0cm, minimum height=1.0cm] {$\alpha=\tfrac13$, $\ f=C\,\eta(\tau)^2\eta(3\tau)^2$};
\draw[->, thick] (q) -- node[left, xshift=-1pt]{\small $U=W_3T$,\ $U^6=-27I$} (f6);
\draw[->, thick] (fricke) -- (f6);
\draw[->, thick] (f6) -- node[right]{\small hypothesis (iii) $f\neq0$\ +\ valence total $=2$} (eta);
\end{tikzpicture}
\caption{Structure of the uniqueness argument. The two symmetry hypotheses produce a weight-$12$ form on the Fricke extension; because $\dim S_{12}(\Gamma_0^+(3))=2$, the nondegeneracy hypothesis~(iii) is needed to select the line $\mathbb C\,\Delta_3^+$ and force $\alpha=\tfrac13$.}
\label{fig:rigidity-architecture}
\end{figure}

\subsection{The weight-12 cusp form space on \texorpdfstring{$\Gamma_0^+(3)$}{Gamma\_0\^+(3)}}

\begin{lemma}[Signature of the Fricke extension]
\label{lem:signature-Fricke}
$\Gamma_0^+(3)$ has genus $0$, a single cusp, one elliptic point of order $2$ and one of order $6$. In the standard signature notation $(g;m_1,\dots,m_r;s)$---genus, orders of the elliptic points, number of cusps---its signature is $(0;2,6;1)$.
\end{lemma}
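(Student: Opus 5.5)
The plan is to read the signature off the structure of $\overline{\Gamma_0(3)}$ established in Theorem~\ref{thm:monodromy-Gamma03}. There $\overline{\Gamma_0(3)}$ was identified as the $(3,\infty,\infty)$ triangle group with fundamental domain $\mathcal F=\mathcal T\cup i\mathbb R_{>0}\cup\mathcal T^{*}$. From that description we know it has genus $0$, two cusps ($i\infty$ and $0$), one elliptic point of order $3$ (the class of $\rho$), and covolume $4\pi/3$.

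The first step is to show that $[\Gamma_0^+(3):\overline{\Gamma_0(3)}]=2$. The matrix $W_3$ normalizes $\Gamma_0(3)$, and $W_3^2=-3I$ is trivial in $\mathrm{PSL}_2(\mathbb R)$. Moreover $W_3\notin\overline{\Gamma_0(3)}$, since it swaps the two cusps, or alternatively because $\det=3$ and no real scaling of it is integral of determinant $1$. Hence $\Gamma_0^+(3)$ is discrete of covolume $2\pi/3$. The quotient $X_0(3)\to X_0^+(3)$ is a double cover of compact Riemann surfaces, so $X_0^+(3)$ is the image of $\mathbb P^1$ and has genus $0$ (Lüroth, or simply Riemann--Hurwitz). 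The cusps $i\infty$ and $0$ are interchanged by $W_3$, so $\Gamma_0^+(3)$ has a single cusp.

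Next I would find the elliptic points. In the hauptmodul picture, $W_3$ acts on $X_0(3)\cong\mathbb P^1_u$ by $u\mapsto 1-u$ (Lemma~\ref{lem:fricke-u}). This involution has exactly two fixed points, $u=\tfrac12$ and $u=\infty$.
\begin{itemize}
\item The point $u=\tfrac12$ corresponds to $\tau=i/\sqrt3$, which is the fixed point of $W_3$. It is not elliptic for $\Gamma_0(3)$, so it becomes an elliptic point of order $2$ for $\Gamma_0^+(3)$.
\item The point $u=\infty$ corresponds to $\rho$, which has stabilizer of order $3$ in $\overline{\Gamma_0(3)}$. Some element $W_3\gamma$ fixes $\rho$, so the stabilizer of $\rho$ in $\Gamma_0^+(3)$ has order $6$. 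Concretely one checks that $U=W_3T$ (or a conjugate) fixes $\rho$ or $\rho+1$, and $U$ has order $6$ projectively: $\operatorname{tr}U=3$ with $\det U=3$, so normalized trace $\sqrt3=2\cos(\pi/6)$.
\end{itemize}
Away from these two fixed points the involution acts freely, so no other points of $\mathbb H$ acquire new stabilizers. Points elliptic for $\Gamma_0(3)$ lie only in the orbit of $\rho$, so there are no further elliptic points.

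Finally, as a consistency check I would apply Gauss--Bonnet. With signature $(0;2,6;1)$ the covolume is
\[
2\pi\Bigl(2g-2+\bigl(1-\tfrac12\bigr)+\bigl(1-\tfrac16\bigr)+1\Bigr)=2\pi\cdot\tfrac13=\tfrac{2\pi}{3},
\]
which matches the index computation.

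I expect the main obstacle to be the bookkeeping of the fixed points of $W_3$ on $X_0(3)$. The key is verifying that the fixed point over $u=\infty$ really comes from an element of $W_3\Gamma_0(3)$ fixing $\rho$, which merges the order-$3$ stabilizer into order $6$, rather than a second order-$2$ point. The explicit order-$6$ element $U$ together with the covolume count settles this.
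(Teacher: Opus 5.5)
Your proof is correct, but it takes a genuinely different route from the paper's.

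\textbf{The paper's route.} The paper exhibits $\tau_2=i/\sqrt3$ and $\tau_6=\rho$ as fixed points of $W_3$ and $W_3T$. On its own this gives only lower bounds on the stabilizer orders (even, and divisible by $6$, respectively). It separates the two orbits using the invariant pair $\{u,1-u\}$. Gauss--Bonnet then does all the remaining work: the inequality $\tfrac{2\pi}{3}\ge 2\pi\bigl[2g+\tfrac13+E\bigr]$ simultaneously forces $g=0$, excludes further elliptic classes, and pins the orders to exactly $2$ and $6$.

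\textbf{Your route.} You work through the double cover $X_0(3)\to X_0^+(3)$. Genus $0$ follows from Lüroth or Riemann--Hurwitz. The elliptic points come from the two fixed points $u=\tfrac12,\infty$ of the involution $u\mapsto1-u$, using that stabilizers in an index-$2$ extension can at most double, and only over fixed points. Exhaustiveness then rests on knowing that $\rho$ gives the only elliptic class of $\overline{\Gamma_0(3)}$, of order $3$, which the triangle-group description of Theorem~\ref{thm:monodromy-Gamma03} supplies. The inequivalence of $\tau_2$ and $\tau_6$ is automatic for you, since they lie over different fixed points.

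\textbf{What each buys.} Your argument is more structural. It explains the orders as $1\cdot2$ and $3\cdot2$, reduces Gauss--Bonnet to a consistency check, and transfers directly to other Atkin--Lehner quotients once the fixed points of the involution are known. The paper's argument needs less input about $\Gamma_0(3)$, essentially only its covolume plus the two explicit elements, but in exchange Gauss--Bonnet becomes indispensable.

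\textbf{Minor remark.} $U=W_3T=\left(\begin{smallmatrix}0&-1\\3&3\end{smallmatrix}\right)$ fixes $\rho$ itself, since its fixed-point equation is $3\tau^2+3\tau+1=0$, so the hedge ``or $\rho+1$'' is unnecessary. Moreover $U^2=-3\,L^{-1}T$, which is projectively the order-$3$ generator of the $\Gamma_0(3)$-stabilizer of $\rho$. This confirms the order-$6$ stabilizer directly.
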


\begin{proof}
Since $[\mathrm{SL}_2(\mathbb Z) : \Gamma_0(3)] = 4$ and $\Gamma_0^+(3) = \Gamma_0(3) \sqcup W_3 \Gamma_0(3)$, we have $[\mathrm{SL}_2(\mathbb Z) : \Gamma_0^+(3)] = 2$, hence $\mathrm{area}(\Gamma_0^+(3) \backslash \mathbb H) = 2\pi/3$.

For prime $N$, $\Gamma_0(N)$ has exactly two cusps $0$ and $\infty$~\cite[Ex.~3.1.4(b)]{Diamond}, which are exchanged by the Fricke involution $W_N$~\cite{AtkinLehner}; hence $\Gamma_0^+(3)$ has a single cusp. The elliptic points come from fixed points of $W_3$ and $W_3 T$. Solving $W_3 \tau = \tau$, i.e., $-1/(3\tau) = \tau$, gives $\tau_2 = i/\sqrt 3$; since $W_3^2 = -3I$ is scalar, $W_3$ has order $2$ in $\mathrm{PSL}_2$, so $\tau_2$ is an elliptic point of order $2$. Solving $(W_3 T)\tau = \tau$, i.e., $-1/(3(\tau+1)) = \tau$, gives $3\tau^2 + 3\tau + 1 = 0$ and hence $\tau_6 = -\tfrac{1}{2} + \tfrac{i}{2\sqrt 3}$; a direct computation gives $(W_3 T)^6 = -27 \cdot I$ (scalar) with $(W_3 T)^k$ non-scalar for $1 \le k \le 5$, so $\tau_6$ is an elliptic point of order $6$. Moreover $\tau_2$ and $\tau_6$ are inequivalent under $\Gamma_0^+(3)$: the unordered pair $\{u(\tau),\,1-u(\tau)\}$ is $\Gamma_0^+(3)$-invariant---each entry is $\Gamma_0(3)$-invariant, and $W_3$ swaps the two entries by Lemma~\ref{lem:fricke-u}---and this pair equals $\{\tfrac12\}$ at $\tau_2$, since $u(\tau_2)=u(W_3\tau_2)=1-u(\tau_2)$, but equals $\{\infty\}$ at $\tau_6=\rho$, the pole of $u$ (Proposition~\ref{prop:schwarz-triangle}, Corollary~\ref{cor:u-v-hauptmodul}).

Gauss--Bonnet for a Fuchsian group of signature $(g; m_1, \ldots, m_r; s)$ acting on $\mathbb H$ reads
\[
\mathrm{area} \;=\; 2\pi\!\left[2g - 2 + s + \sum_{i=1}^r\!\left(1 - \tfrac{1}{m_i}\right)\right].
\]
The stabilizer of $\tau_2$ is a finite cyclic group containing the order-$2$ element $W_3$, so its order $m_1$ is even, giving $1-1/m_1\ge\tfrac12$; likewise the stabilizer of $\tau_6$ contains the order-$6$ element $W_3T$, so $6\mid m_2$ and $1-1/m_2\ge\tfrac56$. Substituting $s=1$ and letting $E\ge0$ denote the contribution of any further elliptic classes,
\[
\tfrac{2\pi}{3} \;=\; 2\pi\!\left[2g - 2 + 1 + \left(1-\tfrac1{m_1}\right) + \left(1-\tfrac1{m_2}\right) + E\right] \;\ge\; 2\pi\!\left[2g + \tfrac{1}{3} + E\right],
\]
with equality if and only if $m_1=2$ and $m_2=6$. Since $g\ge0$ and $E\ge0$, equality must hold throughout: $g=0$, $E=0$, $m_1=2$, $m_2=6$. In particular there are no further elliptic points beyond $\tau_2$ and $\tau_6$.
\end{proof}

\begin{lemma}[Dimension]
\label{lem:dim-weight12}
$\dim S_{12}(\Gamma_0^+(3))=2$.
\end{lemma}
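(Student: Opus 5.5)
The dimension count rests on the valence formula for $\Gamma_0^+(3)$, combined with the signature $(0;2,6;1)$ from Lemma~\ref{lem:signature-Fricke} and two explicit forms. For a nonzero $g\in M_k(\Gamma_0^+(3))$ the valence formula reads
\[
\operatorname{ord}_{\infty}g+\tfrac12\operatorname{ord}_{\tau_2}g+\tfrac16\operatorname{ord}_{\tau_6}g+\sum_{P}\operatorname{ord}_Pg \;=\; \frac{k}{12}\cdot[\mathrm{PSL}_2(\mathbb Z):\overline{\Gamma_0^+(3)}] \;=\; \frac{k}{6},
\]
where $P$ runs over the remaining points of $\Gamma_0^+(3)\backslash\mathbb H$. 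Equivalently, the right side is $\tfrac{k}{4\pi}\cdot\operatorname{area}=\tfrac{k}{4\pi}\cdot\tfrac{2\pi}{3}$. I will derive this in the standard way, by integrating $d\log g$ around the boundary of a fundamental domain. A more direct route is to apply the $\mathrm{SL}_2(\mathbb Z)$ valence formula to the norm $g\cdot(g|_kW)$ over cosets, which loses local-order information. The cleanest option is to cite the general formula $\deg(\operatorname{div} g)=\tfrac{k}{4\pi}\operatorname{area}$ for finite-covolume Fuchsian groups (Shimura). For $k=12$ the total is $2$.

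\emph{Upper bound.} Any $g\in S_{12}(\Gamma_0^+(3))$ has $\operatorname{ord}_\infty g\ge1$; here the local parameter at the single cusp is $q$, since the width is $1$. Consider the linear map $g\mapsto(a_1(g),a_2(g))$ on $q$-coefficients. If both coefficients vanish, then $\operatorname{ord}_\infty g\ge3>2$, which forces $g=0$. So the map is injective and $\dim\le2$. There is a subtlety I must check: $-I$ and the scalar $W_3^2=-3I$ act trivially in even weight, so the formula applies without sign issues.

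\emph{Lower bound.} I exhibit two independent elements: $\Delta_1^+:=\Delta(\tau)+3^6\Delta(3\tau)$ and $\Delta_3^+:=(\eta(\tau)\eta(3\tau))^{12}$.
\begin{itemize}
\item $\Delta(\tau)$ and $\Delta(3\tau)$ both lie in $S_{12}(\Gamma_0(3))$.
\item Using $\Delta(-1/\sigma)=\sigma^{12}\Delta(\sigma)$ and the slash normalization $\det^{6}$, I compute $\Delta|_{12}W_3=3^6\Delta(3\tau)$ and $\Delta(3\cdot)|_{12}W_3=3^{-6}\Delta$. Hence the stated combination is $W_3$-invariant.
\item For $\Delta_3^+$, the computation in Theorem~\ref{thm:modular-derivative}(ii), raised to the sixth power, gives multiplier $(e^{2\pi i/3})^6=1$ under $T$ and $(-1)^6=1$ under $W_3$.
\item $\Delta_3^+$ is $\Gamma_0(3)$-invariant as a genuine eta quotient of level $3$. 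I will check this via the standard criteria ($\sum\delta r_\delta\equiv0$ and $\sum (N/\delta)r_\delta\equiv0 \bmod 24$, with a square character), or simply as $\eta^{12}(\tau)\eta^{12}(3\tau)$ being the sixth power of a form whose multiplier has order $3$ and is trivial on $L$.
\end{itemize}
Both forms are cuspidal at $\infty$, hence at the unique cusp. They are independent because $\Delta_1^+=q+O(q^2)$ while $\Delta_3^+=q^2+O(q^3)$.

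\emph{Main obstacle.} I expect the delicate step to be a rigorous statement of the valence formula with the right weights $\tfrac12,\tfrac16$ at $\tau_2,\tau_6$, using the correct group orders from Lemma~\ref{lem:signature-Fricke}. I would first try to deduce it from the $\Gamma_0(3)$ version; that version has total $k/3=4$ for $k=12$, one cusp each at $0$ and $\infty$, and $\rho$ of order $3$. To do so, I would note that $W_3$ identifies the two cusps and that the orders at the two cusps coincide for a $W_3$-invariant form. As a cross-check, the Riemann–Roch dimension formula for signature $(0;2,6;1)$ gives
\[
\dim S_{12}=(k-1)(g-1)+(\tfrac k2-1)s+\sum\lfloor \tfrac{k(m_i-1)}{2m_i}\rfloor=-11+5+3+5=2.
\]
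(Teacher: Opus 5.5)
Your proof is correct, but it follows a different route from the paper's.

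\textbf{How the two proofs differ.} The paper's proof is a single application of the Shimura/Riemann--Roch dimension formula to the signature $(0;2,6;1)$ from Lemma~\ref{lem:signature-Fricke}, giving $-11+5+3+5=2$. Only afterwards, in Lemma~\ref{lem:basis-S12}, does it check that $\Delta_1^+,\Delta_3^+$ lie in the space and use the dimension count to call them a basis. You instead get $\dim\le 2$ from the valence formula, via injectivity of $g\mapsto(a_1(g),a_2(g))$, and $\dim\ge 2$ from the explicit pair. So you prove Lemmas~\ref{lem:dim-weight12} and~\ref{lem:basis-S12} together, with Riemann--Roch demoted to a cross-check.

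\textbf{What your route buys.} It needs less input. The upper bound uses only three facts: the valence total $2$ (covolume $2\pi/3$), the width $1$ of the cusp $\infty$, and nonnegativity of all other terms. In particular, the elliptic weights $\tfrac12,\tfrac16$ that you flag as the main obstacle play no role; any positive weights give the same bound. Your fallback through $\Gamma_0(3)$ also works as stated: $\mathrm{ord}_0 g=\mathrm{ord}_\infty g$ for $W_3$-invariant $g$, so $2\,\mathrm{ord}_\infty g\le 4$. The paper's floor terms $\lfloor 3\rfloor+\lfloor 5\rfloor$, by contrast, depend on knowing the elliptic orders exactly and on there being no further elliptic points. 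The paper's route is shorter once the signature and the general dimension formula are granted. Your upper-bound argument is essentially the one the paper uses later in Step~4 of the uniqueness proof.

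\textbf{Two small corrections.}
\begin{enumerate}
\item Since $\det W_3=3$, we have $\Gamma_0^+(3)\not\subset\mathrm{PSL}_2(\mathbb Z)$. The ``index'' in your valence total therefore has to be read as a covolume ratio; your area form $\tfrac{k}{4\pi}\cdot\tfrac{2\pi}{3}$ is the correct statement.
\item The multiplier of $\eta^2\eta(3\cdot)^2$ is not trivial on $L$. From $L=W_3T^{-1}W_3^{-1}$ one gets $v_\eta(L)=v_\eta(T)^{-1}=e^{-2\pi i/3}$. This does no harm, since cube roots of unity disappear in the sixth power. In fact your $T$- and $W_3$-invariance of $\Delta_3^+$ already suffices: $T$ and $L=W_3T^{-1}W_3^{-1}$ generate $\overline{\Gamma_0(3)}$ (Theorem~\ref{thm:monodromy-Gamma03}), so the Ligozat check is optional.
\end{enumerate}
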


\begin{proof}
For a Fuchsian group of finite covolume with signature $(g; m_1, \ldots, m_r; s)$ and $k$ even with $k \ge 4$, the standard Riemann--Roch dimension formula (see e.g.~\cite[\S2.6]{Shimura} or~\cite[\S2]{Miyake}) gives
\[
\dim S_k(\Gamma) \;=\; (k-1)(g-1) + \left(\tfrac{k}{2}-1\right)s + \sum_i\!\left\lfloor\tfrac{k}{2}\!\left(1 - \tfrac{1}{m_i}\right)\right\rfloor.
\]
This formula applies to $\Gamma_0^+(3)$ since all of its cusps are regular for even weight. With $(g; m_1, m_2; s) = (0; 2, 6; 1)$ and $k = 12$:
\[
\dim S_{12}(\Gamma_0^+(3)) \;=\; 11\cdot(-1) + 5\cdot 1 + \lfloor 3 \rfloor + \lfloor 5 \rfloor \;=\; -11 + 5 + 3 + 5 \;=\; 2. \qedhere
\]
\end{proof}

\begin{definition}[Two weight-$12$ cusp forms]
\label{def:delta-plus}
Let
\[
\Delta_3^+(\tau):=(\eta(\tau)\eta(3\tau))^{12},\qquad \Delta_1^+(\tau):=\Delta(\tau)+3^6\Delta(3\tau),
\]
where $\Delta(\tau)=\eta(\tau)^{24}$.
\end{definition}

\begin{lemma}[A basis of $S_{12}(\Gamma_0^+(3))$]
\label{lem:basis-S12}
Both $\Delta_1^+$ and $\Delta_3^+$ belong to $S_{12}(\Gamma_0^+(3))$, and $\{\Delta_1^+,\Delta_3^+\}$ is a basis. Moreover
\[
\Delta_1^+(\tau)=q+O(q^2),\qquad \Delta_3^+(\tau)=q^2(1+O(q)).
\]
\end{lemma}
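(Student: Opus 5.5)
I would verify the required properties of each form one at a time (transformation under $\Gamma_0(3)$, behavior under $W_3$, vanishing at the cusp), read off the leading $q$-coefficients, and then obtain the basis statement from Lemma~\ref{lem:dim-weight12}.

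\emph{Transformation under $\Gamma_0(3)$.} For $\Delta_1^+$: $\Delta\in S_{12}(\mathrm{SL}_2(\mathbb Z))$, so $\Delta$ is invariant under $|_{12}\gamma$ for every $\gamma\in\Gamma_0(3)$. The function $\tau\mapsto\Delta(3\tau)$ is also $\Gamma_0(3)$-invariant in weight $12$. To see this, write $\gamma=\left(\begin{smallmatrix}a&b\\3c'&d\end{smallmatrix}\right)$. Then $3\gamma\tau=\gamma'(3\tau)$ with $\gamma'=\left(\begin{smallmatrix}a&3b\\c'&d\end{smallmatrix}\right)\in\mathrm{SL}_2(\mathbb Z)$, and $c'(3\tau)+d=3c'\tau+d$, so the automorphy factors match. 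For $\Delta_3^+$: multiply the $\eta$ transformation law for $\eta(\tau)$ and $\eta(3\tau)$. The multiplier of $\eta(\tau)^2\eta(3\tau)^2$ has order $3$ by Theorem~\ref{thm:modular-derivative}(ii), and $\Delta_3^+=(\eta^2\eta(3\cdot)^2)^6$. Hence its multiplier is $v_\eta^6=1$, so $\Delta_3^+|_{12}\gamma=\Delta_3^+$ for all $\gamma\in\Gamma_0(3)$.

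\emph{Fricke involution.} For $\Delta_3^+$, Theorem~\ref{thm:modular-derivative}(ii) gives $(\eta^2\eta(3\cdot)^2)|_2W_3=-\eta^2\eta(3\cdot)^2$. Raising to the sixth power (slash operators are multiplicative on products, with weights adding) gives $\Delta_3^+|_{12}W_3=(-1)^6\Delta_3^+=\Delta_3^+$. For $\Delta_1^+$, compute directly from $\Delta(-1/\sigma)=\sigma^{12}\Delta(\sigma)$:
\[
\Delta(W_3\tau)=\Delta\!\left(-\tfrac{1}{3\tau}\right)=(3\tau)^{12}\Delta(3\tau),\qquad \Delta(3W_3\tau)=\Delta\!\left(-\tfrac1\tau\right)=\tau^{12}\Delta(\tau).
\]
With $(g|_{12}W_3)(\tau)=3^6(3\tau)^{-12}g(W_3\tau)$, this gives
\[
\Delta|_{12}W_3=3^6\Delta(3\tau),\qquad \Delta(3\cdot)|_{12}W_3=3^{-6}\Delta(\tau).
\]
So $W_3$ swaps the two summands of $\Delta_1^+$, and $\Delta_1^+$ is $W_3$-invariant. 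Since $\Gamma_0^+(3)$ is generated by $\Gamma_0(3)$ and $W_3$, both forms are invariant under all of $\Gamma_0^+(3)$.

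\emph{Cusp behavior and basis.} Both forms are holomorphic on $\mathbb H$, since $\eta$ is. By Lemma~\ref{lem:signature-Fricke}, $\Gamma_0^+(3)$ has a single cusp, so it suffices to check vanishing at $i\infty$. The product expansions give
\[
\Delta(\tau)=q-24q^2+\cdots,\qquad \Delta(3\tau)=q^3+\cdots,
\]
so $\Delta_1^+=q+O(q^2)$. Also $\eta(\tau)\eta(3\tau)=q^{1/24+3/24}(1+O(q))=q^{1/6}(1+O(q))$, so $\Delta_3^+=q^2(1+O(q))$. Hence both lie in $S_{12}(\Gamma_0^+(3))$. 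Their orders of vanishing at the cusp differ ($1$ versus $2$), so they are linearly independent. By Lemma~\ref{lem:dim-weight12} the space has dimension $2$, so they form a basis.

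\emph{Main obstacle.} The only delicate point is the bookkeeping of the $\det^{k/2}$ normalization in the slash operator for $W_3$, which has determinant $3$; the factors $3^{\pm6}$ must cancel exactly. For $\Delta_3^+$ I would sidestep any independent multiplier computation by reusing the weight-$2$ results of Theorem~\ref{thm:modular-derivative}(ii).
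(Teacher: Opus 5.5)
Your proposal is correct and follows the paper's proof essentially step for step: invariance of each form under $\Gamma_0(3)$ and $W_3$, the same $3^{\pm6}$ bookkeeping for $\Delta_1^+$, leading $q$-terms at the single cusp, and the dimension count of Lemma~\ref{lem:dim-weight12}. The only real difference is that you get the $\Gamma_0(3)$-invariance of $\Delta_3^+$ by raising the order-$3$ multiplier of $\eta^2\eta(3\cdot)^2$ from Theorem~\ref{thm:modular-derivative}(ii) to the sixth power, where the paper invokes Ligozat's eta-quotient criteria; you also spell out the conjugation $\gamma\mapsto\gamma'$ for $\Delta(3\tau)$, which the paper calls standard. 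Because that theorem's proof only computes the $T$- and $W_3$-eigenvalues explicitly, your step is best justified by noting $v_\eta(T)^6=v_\eta(W_3)^6=1$ and that $T,W_3$ generate $\Gamma_0^+(3)\supseteq\Gamma_0(3)$ modulo the center, exactly as in Step~3 of the proof of Theorem~\ref{thm:uniqueness}.
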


\begin{proof}
\emph{$\Delta_3^+ \in S_{12}(\Gamma_0^+(3))$.} By Ligozat's criteria for eta-quotients (\cite[Thm.~1.64]{Ono}; the criteria go back to Ligozat~\cite[Prop.~3.2.1]{Ligozat}, and see also~\cite{Köhler}), the form $(\eta(\tau)\eta(3\tau))^{12}$ is a weight-$12$ cusp form on $\Gamma_0(3)$ with trivial character: $\sum d\,r_d = 12 + 36 = 48 \equiv 0 \pmod{24}$, $\sum (N/d)\,r_d = 36 + 12 = 48 \equiv 0 \pmod{24}$, and $\prod d^{r_d} = 3^{12} = (3^6)^2$ is a rational square. To verify $W_3$-invariance, the $\eta$-transformation $\eta(-1/\sigma) = \sqrt{-i\sigma}\,\eta(\sigma)$ gives
\[
\eta(W_3\tau)\,\eta(3\,W_3\tau) \;=\; \eta\!\left(-\tfrac{1}{3\tau}\right)\eta\!\left(-\tfrac{1}{\tau}\right) \;=\; \sqrt{-3i\tau}\,\eta(3\tau)\cdot\sqrt{-i\tau}\,\eta(\tau) \;=\; -i\sqrt 3\,\tau\,\eta(\tau)\eta(3\tau),
\]
with principal square roots (both radicands lie in the right half-plane for $\tau\in\mathbb H$), consistent with the weight-$2$ statement $\Delta_3(W_3\tau)=-3\tau^2\Delta_3(\tau)$ of Theorem~\ref{thm:modular-derivative}(ii). Raising to the $12$th power and applying the slash operator $(g|_{12} W_3)(\tau) = \det(W_3)^6\,(3\tau)^{-12}\,g(W_3\tau)$ with $\det(W_3) = 3$:
\[
\big(\Delta_3^+\big|_{12} W_3\big)(\tau) \;=\; 3^6\,(3\tau)^{-12}\,(-i\sqrt 3\,\tau)^{12}\,\Delta_3^+(\tau) \;=\; 3^6 \cdot 3^{-12}\,\tau^{-12} \cdot 3^6\,\tau^{12}\,\Delta_3^+(\tau) \;=\; \Delta_3^+(\tau).
\]
The $q$-expansion of $\Delta_3^+$ starts at $q^2$, so $\Delta_3^+ \in S_{12}(\Gamma_0^+(3))$. (The corresponding eigenvalue $v_\eta(W_3) = -1$ of the underlying weight-$2$ form $\eta(\tau)^2\eta(3\tau)^2$ was computed in Theorem~\ref{thm:modular-derivative}(ii).)

\emph{$\Delta_1^+ \in S_{12}(\Gamma_0^+(3))$.} Both $\Delta(\tau)$ and $\Delta(3\tau)$ lie in $S_{12}(\Gamma_0(3))$ by standard eta-quotient theory. Under $W_3$, using $\Delta(-1/\sigma) = \sigma^{12}\,\Delta(\sigma)$,
\[
\Delta\big|_{12} W_3 \;=\; 3^6\,\Delta(3\tau), \qquad \Delta(3\cdot)\big|_{12} W_3 \;=\; 3^{-6}\,\Delta(\tau).
\]
Therefore
\[
\big(\Delta + 3^6\,\Delta(3\cdot)\big)\big|_{12} W_3 \;=\; 3^6\,\Delta(3\cdot) + 3^6\cdot 3^{-6}\,\Delta \;=\; \Delta + 3^6\,\Delta(3\cdot) \;=\; \Delta_1^+.
\]
Combined with $T$-invariance (both $\Delta$ and $\Delta(3\cdot)$ have period $1$), this gives $\Delta_1^+ \in S_{12}(\Gamma_0^+(3))$. Since $\Delta = q + O(q^2)$ and $\Delta(3\tau)$ starts at $q^3$, $\Delta_1^+$ has $q$-expansion $q + O(q^2)$.

\emph{Linear independence and basis.} The leading $q$-expansions $\Delta_1^+ = q + O(q^2)$ and $\Delta_3^+ = q^2 + O(q^3)$ are linearly independent, and $S_{12}(\Gamma_0^+(3))$ is two-dimensional by Lemma~\ref{lem:dim-weight12}.
\end{proof}

\begin{lemma}[Valence formula for $\Gamma_0^+(3)$]
\label{lem:valence}
For a nonzero $G \in M_{12}(\Gamma_0^+(3))$,
\[
\mathrm{ord}_\infty(G) + \tfrac{1}{2}\,\mathrm{ord}_{\tau_2}(G) + \tfrac{1}{6}\,\mathrm{ord}_{\tau_6}(G) + \!\!\sum_{\substack{z \in \Gamma_0^+(3)\backslash\mathbb H \\ z \ne \tau_2, \tau_6}}\!\!\mathrm{ord}_z(G) \;=\; \frac{12}{4\pi}\cdot \mathrm{area}(\Gamma_0^+(3)\backslash\mathbb H) \;=\; \frac{12}{4\pi}\cdot\frac{2\pi}{3} \;=\; 2,
\]
where $\tau_2 = i/\sqrt 3$ and $\tau_6 = -\tfrac{1}{2} + \tfrac{i}{2\sqrt 3}$ are the elliptic points of orders $2$ and $6$ from Lemma~\ref{lem:signature-Fricke}, and $\mathrm{ord}_z(G)$ denotes the order of vanishing of $G$ as a holomorphic function at $z$.
\end{lemma}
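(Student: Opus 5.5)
We will prove the valence formula by the standard contour-integration argument, run on a fundamental domain for $\Gamma_0^+(3)$. A convenient choice is half of the domain $\mathcal F$ of Theorem~\ref{thm:monodromy-Gamma03}, since $\Gamma_0(3)$ has index $2$ in $\Gamma_0^+(3)$. Concretely, take
\[
\mathcal F^+:=\bigl\{\tau\in\mathbb H:\ -\tfrac12\le\Re\tau\le 0,\ |\tau|\ge \tfrac1{\sqrt3},\ |\tau+\tfrac13|\ge\tfrac13\bigr\}.
\]
It is bounded by the vertical line $\Re\tau=-\tfrac12$, the arc $|\tau|=1/\sqrt3$ (which $W_3$ maps to itself, reversing its orientation about $\tau_2=i/\sqrt3$), and the imaginary axis, suitably identified. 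Rather than checking that this is exactly a fundamental domain, we will use an argument that depends only on the area and the signature.

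\textbf{Step 1 (the divisor of $G$).} Since $G|_{12}\gamma=G$ for all $\gamma\in\Gamma_0^+(3)$, the order $\mathrm{ord}_z(G)$ depends only on the orbit of $z$, so the left-hand side is well defined. It is a finite sum, because $G\neq0$ is holomorphic, $G$ has a $q$-expansion at the single cusp (Lemma~\ref{lem:signature-Fricke}), and the zeros of $G$ cannot accumulate in the compactification.

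\textbf{Step 2 (reduction to degree $0$).} Choose an auxiliary form of the same weight whose divisor is known. We take $\Delta_3^+\in S_{12}(\Gamma_0^+(3))$ from Lemma~\ref{lem:basis-S12}. It is a product of eta functions, so it has no zeros in $\mathbb H$. Its order at $\infty$ is $2$ by its $q$-expansion $q^2(1+O(q))$. Hence its weighted divisor degree is exactly $2$.

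The quotient $G/\Delta_3^+$ is then a $\Gamma_0^+(3)$-invariant meromorphic function on the compact Riemann surface $X_0^+(3)$, which has genus $0$ by Lemma~\ref{lem:signature-Fricke}. The degree of the divisor of a meromorphic function on a compact surface is zero. To express this in terms of orders on $\mathbb H$, use local uniformizers: at an elliptic point of order $m$, the local coordinate on the quotient is $(\tau-z)^m$ after a Cayley transform, so the order downstairs is $\mathrm{ord}_z/m$; at the cusp the coordinate is $q$. Therefore the weighted sum for $G$ equals the weighted sum for $\Delta_3^+$, namely $2$.

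\textbf{Step 3 (the constant).} Finally, check that $2=\tfrac{12}{4\pi}\cdot\tfrac{2\pi}{3}$, using the area computed in the proof of Lemma~\ref{lem:signature-Fricke}. Alternatively, Riemann--Hurwitz gives the general form $\tfrac{k}{12}[\mathrm{PSL}_2(\mathbb Z):\Gamma]$ with index $2$.

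\textbf{Main obstacle.} The delicate point is the local computation at the elliptic points $\tau_2$ and $\tau_6$. We must check that the stabilizers act effectively with orders exactly $2$ and $6$ in $\mathrm{PSL}_2$ (Lemma~\ref{lem:signature-Fricke}), and that invariance under the scalar-free stabilizer makes $G/\Delta_3^+$ a function of $(\tau-z)^m/(\tau-\bar z)^m$. Both facts are standard. We also need to confirm that $\Delta_3^+$ does not vanish at $\tau_2$ or $\tau_6$, which holds because $\eta$ is nonvanishing on $\mathbb H$.
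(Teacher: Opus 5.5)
Your argument is correct, but it takes a different route from the paper's.

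\textbf{The paper's route.} The paper proves this lemma by citing the general valence formula for Fuchsian groups (Shimura, \S\S2.3--2.4). There the total $\frac{k}{4\pi}\,\mathrm{area}(\Gamma\backslash\mathbb H)$ comes from contour integration over a fundamental domain (equivalently, from the degree of the canonical divisor plus Gauss--Bonnet). The paper then substitutes $k=12$ and the area $2\pi/3$ from Lemma~\ref{lem:signature-Fricke}.

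\textbf{Your route.} You compare $G$ with a reference form.
\begin{itemize}
\item $\Delta_3^+\in S_{12}(\Gamma_0^+(3))$ is already available from Lemma~\ref{lem:basis-S12}. That lemma is proved via Ligozat's criteria and an explicit $W_3$ computation, not via the valence formula, so there is no circularity.
\item $\Delta_3^+$ has no zeros on $\mathbb H$ and $\mathrm{ord}_\infty=2$. Hence $G/\Delta_3^+$ is a $\Gamma_0^+(3)$-invariant function, holomorphic on $\mathbb H$, with at most a double pole at the cusp.
\item Its divisor on $X_0^+(3)$ has degree zero. Reading this through the charts $\bigl((\tau-z)/(\tau-\bar z)\bigr)^m$ at $\tau_2,\tau_6$ and $q$ at the width-one cusp gives the weighted total $2$ directly.
\end{itemize}

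\textbf{What each buys.} Your version is self-contained: it needs only the elliptic and cusp data of Lemma~\ref{lem:signature-Fricke} and one nowhere-vanishing eta product. The area enters only as the arithmetic check $2=\frac{12}{4\pi}\cdot\frac{2\pi}{3}$, so the argument doubles as a consistency test of that area computation. As a by-product you also get that $\mathrm{ord}_{\tau_6}(G)$ is divisible by $6$, which Remark~\ref{rem:ghost} uses. The cited formula, in return, gives generality: every weight, with no auxiliary form needed. Note that the genus of $X_0^+(3)$ plays no role in your Step 2, since principal divisors have degree zero on any compact Riemann surface.

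\textbf{A correction to the unused preamble.} Your $\mathcal F^+$ is not a fundamental domain for $\Gamma_0^+(3)$.
\begin{itemize}
\item In the strip $-\frac12\le\Re\tau\le0$, the condition $|\tau+\frac13|\ge\frac13$ already follows from $|\tau|\ge1/\sqrt3$.
\item So $\mathcal F^+$ is the triangle with vertices $i\infty,\tau_6,\tau_2$ and angles $0,\pi/6,\pi/2$. Its area is $\pi/3$, only half the covolume.
\item The standard choice is $\{|\Re\tau|\le\frac12,\ |\tau|\ge1/\sqrt3\}$, with $T$ pairing the vertical sides and $W_3$ folding the arc about $\tau_2$.
\end{itemize}
Since your proof never uses this region, just delete it. Likewise, the aside $\frac{k}{12}[\mathrm{PSL}_2(\mathbb Z):\Gamma]$ only makes sense if the index is read as a ratio of covolumes, because $\Gamma_0^+(3)\not\subset\mathrm{PSL}_2(\mathbb Z)$.
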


\begin{proof}
This is the valence formula~\cite[\S2.3--2.4]{Shimura} for weight $k = 12$, with $\mathrm{area}(\Gamma_0^+(3)\backslash\mathbb H) = 2\pi/3$ as computed in the proof of Lemma~\ref{lem:signature-Fricke}.
\end{proof}

\subsection{Proof of Theorem~\ref{thm:uniqueness}}

Let
\[
T=\begin{pmatrix}1&1\\0&1\end{pmatrix},\qquad W_3=\begin{pmatrix}0&-1\\3&0\end{pmatrix},\qquad U:=W_3 T,
\]
so that $T\tau=\tau+1$, $W_3\tau=-1/(3\tau)$, $U\tau=-1/(3(\tau+1))$. Recall from \S\ref{sec:proof-main} the weight-$k$ slash operator on $\mathrm{GL}_2^+(\mathbb R)$:
\[
(g|_k M)(\tau)=\det(M)^{k/2}(c\tau+d)^{-k}\,g(M\tau),\quad M=\begin{pmatrix}a&b\\c&d\end{pmatrix}.
\]

\paragraph{Step 1 ($T$-eigenrelation).} Each term $q^{n+\alpha}=e^{2\pi i(n+\alpha)\tau}$ is multiplied by $e^{2\pi i(n+\alpha)}=e^{2\pi i\alpha}$ under $\tau\mapsto\tau+1$, so the $q$-expansion gives
\[
F(\tau+1)=A\,F(\tau),\qquad A:=e^{2\pi i\alpha},
\]
for $\Im\tau$ large enough that the expansion converges; since both sides are holomorphic on $\mathbb H$, the identity theorem extends the relation to all of $\mathbb H$. Differentiating, $f(\tau+1)=A f(\tau)$, i.e.\ $f|_2 T=A f$.

\paragraph{Step 2 (Fricke relation for $f$).} Differentiating $F(W_3\tau)=1-F(\tau)$ gives $f(W_3\tau)/(3\tau^2)=-f(\tau)$. Since $\det(W_3)=3$ and $(3\tau)^{-2}=1/(9\tau^2)$, we compute
\[
(f|_2 W_3)(\tau)=3\cdot(3\tau)^{-2}\,f(W_3\tau)=\frac{1}{3\tau^2}f(W_3\tau)=-f(\tau),
\]
so $f|_2 W_3=-f$.

\paragraph{Step 3 (finite-order relation and $\alpha \in \{\tfrac16, \tfrac13, \tfrac23, \tfrac56\}$).} From $F(T\tau) = A\,F(\tau)$ and $F(W_3\tau) = 1 - F(\tau)$ we obtain
\[
F(U\tau) \;=\; 1 - A\,F(\tau).
\]
Setting $G(x) := 1 - Ax$, we have $F(U^n\tau) = G^{\circ n}(F(\tau))$. A direct calculation gives $U^6 = -27\,I$ in $\mathrm{GL}_2^+(\mathbb R)$, so $U^6$ acts trivially on $\mathbb H$ as a M\"obius transformation. Therefore $G^{\circ 6}$ fixes every point of $F(\mathbb H)$; since $a_0\ne0$ forces $F$ to be nonconstant, $F(\mathbb H)$ is open, and an affine map of $\mathbb C$ fixing a nonempty open set is the identity. Hence $G^{\circ 6} = \mathrm{id}$. Expanding,
\[
G^{\circ 3}(x) \;=\; 1 - A + A^2 - A^3 x, \qquad G^{\circ 6}(x) \;=\; (1 - A + A^2)(1 - A^3) + A^6\,x.
\]
Thus $G^{\circ 6} = \mathrm{id}$ forces $A^6 = 1$ \emph{and} $(1 - A + A^2)(1 - A^3) = 0$. Since $A=e^{2\pi i\alpha}$, the condition $A^6=1$ says $6\alpha\in\mathbb Z$, and constrains $\alpha$ only modulo $1$; the auxiliary equation eliminates exactly the class $\alpha \equiv \tfrac12 \pmod 1$ (where $A = -1$ and $(1 + 1 + 1)(1 + 1) = 6 \ne 0$), while the classes $\alpha\equiv 0,\tfrac16,\tfrac13,\tfrac23,\tfrac56 \pmod 1$ all satisfy it. Hence, with $\alpha>0$,
\[
6\alpha \;\in\; \mathbb Z_{>0}, \qquad \alpha \;\not\equiv\; \tfrac12 \pmod 1,
\]
that is, $\alpha \in \{\tfrac16, \tfrac13, \tfrac23, \tfrac56, 1, \tfrac76, \dots\}$, the positive multiples of $\tfrac16$ omitting $\tfrac12+\mathbb Z_{\ge0}$.
Differentiating the $q$-expansion of $F$ in $\tau$, $f(\tau) = 2\pi i\sum_{n\ge 0} a_n(n + \alpha)\,q^{n + \alpha} = c_0\,q^\alpha(1 + O(q))$ with $c_0 = 2\pi i \alpha\, a_0 \ne 0$. Therefore $f^6 = c_0^6\,q^{6\alpha}(1 + O(q))$. Because $A^6 = 1$ and $(-1)^6 = 1$, Steps 1 and 2 give $f^6|_{12} T = f^6$ and $f^6|_{12} W_3 = f^6$. Now $T$ and $W_3$ generate $\Gamma_0^+(3)$ modulo the center: indeed $W_3 T W_3^{-1} = \left(\begin{smallmatrix}1&0\\-3&1\end{smallmatrix}\right) = L^{-1}$, and $\Gamma_0(3)$ is generated by $T$ and $L$ modulo $\pm I$ (Theorem~\ref{thm:monodromy-Gamma03}), so $\langle T, W_3\rangle \supseteq \langle \Gamma_0(3), W_3\rangle = \Gamma_0^+(3)$ in $\mathrm{PSL}_2(\mathbb R)$; the center acts trivially in even weight. We obtain
\[
f^6 \;\in\; M_{12}(\Gamma_0^+(3)).
\]
Since $c_0 \ne 0$ and $6\alpha > 0$, the form $f^6$ vanishes at the cusp; hence $f^6 \in S_{12}(\Gamma_0^+(3))$.

\paragraph{Step 4 (nonvanishing forces $\alpha = \tfrac13$ and $f^6 = c_0^6\,\Delta_3^+$).}
By hypothesis~(iii), $f$ is nowhere vanishing on $\mathbb H$, so $f^6$ is nowhere vanishing on $\mathbb H$ as well. In the valence formula (Lemma~\ref{lem:valence}) applied to $G = f^6$, every term coming from a point of $\mathbb H$ therefore vanishes---including the elliptic contributions $\tfrac12\,\mathrm{ord}_{\tau_2}(f^6)$ and $\tfrac16\,\mathrm{ord}_{\tau_6}(f^6)$---and the formula collapses to
\[
\mathrm{ord}_\infty(f^6) \;=\; 2.
\]
Since $\mathrm{ord}_\infty(f^6) = 6\alpha$ by the leading $q$-term computed in Step 3, this gives $\alpha = \tfrac13$. (In particular every other admissible value is excluded at once: any admissible $\alpha > \tfrac13$ would give $\mathrm{ord}_\infty(f^6) = 6\alpha \ge 4 > 2$, exceeding the valence total, while $\alpha = \tfrac16$ would give $\mathrm{ord}_\infty(f^6) = 1$, forcing an interior zero of $f^6$ and contradicting nonvanishing.)

Now write $f^6 = a\,\Delta_1^+ + b\,\Delta_3^+$ in the basis of Lemma~\ref{lem:basis-S12}. Since $\Delta_1^+ = q + O(q^2)$ and $\Delta_3^+ = q^2 + O(q^3)$, the coefficient of $q^1$ in $f^6$ equals $a$; but $\mathrm{ord}_\infty(f^6) = 2$ means this coefficient vanishes, so $a = 0$. Hence
\[
f(\tau)^6 \;=\; b\,\Delta_3^+(\tau) \;=\; b\,(\eta(\tau)\eta(3\tau))^{12}, \qquad b = c_0^6 \ne 0.
\]
Because $\eta(\tau)$ and $\eta(3\tau)$ are nowhere vanishing on $\mathbb H$, the function $h := f/(\eta^2\eta(3\cdot)^2)$ is holomorphic and nowhere vanishing on $\mathbb H$ with $h^6 = b$ constant. A continuous function on the connected set $\mathbb H$ valued in the finite set of sixth roots of $b$ is constant, so
\[
f(\tau) \;=\; C\,\eta(\tau)^2\,\eta(3\tau)^2, \qquad C^6 = b.
\]

\paragraph{Recovering $F$.} Let $\widetilde\Pi_h^{\mathrm{par}}$ denote the holomorphic extension to $\mathbb H$ of $\tau\mapsto\Pi_h^{\mathrm{par}}(-i\sqrt 3\,\tau)$ provided by Theorem~\ref{thm:modular-derivative}(i). Since $\alpha = 1/3 > 0$, integrating $f = C\,\eta^2\eta(3\cdot)^2$ from the cusp gives
\[
F(\tau) \;=\; C\int_{i\infty}^\tau \eta(z)^2\,\eta(3z)^2\,dz, \qquad F(\tau) \to 0 \ \text{as}\ \Im\tau \to \infty.
\]
By Theorem~\ref{thm:modular-derivative}(i), $\widetilde\Pi_h^{\mathrm{par}}$ is the same integral with constant $c_\eta$ in place of $C$, and it too vanishes at the cusp; hence $F = \kappa\,\widetilde\Pi_h^{\mathrm{par}}$ with $\kappa := C/c_\eta$ (two antiderivatives of proportional forms that both vanish at $i\infty$).

It remains to fix the scalar $\kappa$, and this is where the functional equation re-enters. Both functions satisfy~(ii): $F$ does by hypothesis, and $\widetilde\Pi_h^{\mathrm{par}}$ does because its integrand has Fricke eigenvalue $\eta^2\eta(3\cdot)^2\big|_2 W_3 = -\,\eta^2\eta(3\cdot)^2$ (Theorem~\ref{thm:modular-derivative}(ii)), so that $\widetilde\Pi_h^{\mathrm{par}}(W_3\tau) + \widetilde\Pi_h^{\mathrm{par}}(\tau)$ is constant on $\mathbb H$; evaluating as $\Im\tau\to\infty$ and using the boundary values $\Pi_h^{\mathrm{par}}(r)\to 0$ as $r\to\infty$ and $\Pi_h^{\mathrm{par}}(r)\to 1$ as $r\to 0^+$ (both from the integral formula of Theorem~\ref{thm:cardy-parallelogram}(i) together with the endpoint limits of Lemma~\ref{lem:endpoint-r}) shows the constant is $1$. Substituting $F = \kappa\,\widetilde\Pi_h^{\mathrm{par}}$ into~(ii) now gives
\[
1 \;=\; F(W_3\tau) + F(\tau) \;=\; \kappa\bigl[\widetilde\Pi_h^{\mathrm{par}}(W_3\tau) + \widetilde\Pi_h^{\mathrm{par}}(\tau)\bigr] \;=\; \kappa,
\]
so $\kappa = 1$ and $F = \widetilde\Pi_h^{\mathrm{par}}$, completing the proof.\qed

\begin{remark}[The necessity of hypothesis (iii): a spurious solution]
\label{rem:ghost}
Hypothesis~(iii) cannot be omitted. Without it, Step~4 only yields $\mathrm{ord}_\infty(f^6) = 6\alpha \le 2$, which admits $\alpha = \tfrac16$ in addition to $\alpha = \tfrac13$. For $\alpha = \tfrac16$ one has $\mathrm{ord}_\infty(f^6) = 1$, and writing $f^6 = a\,\Delta_1^+ + b\,\Delta_3^+$ the cusp order fixes only $a = c_0^6 \ne 0$, leaving the coefficient $b$ \emph{undetermined}. Crucially, the order of any $G \in M_{12}(\Gamma_0^+(3))$ at the order-$6$ elliptic point $\tau_6$ is divisible by $6$ (near $\tau_6$, the automorphy law under the order-$6$ stabilizer generated by $U$, whose derivative at the fixed point is $U'(\tau_6) = e^{-i\pi/3}$, forces the Taylor expansion of $G$ at $\tau_6$ to contain only the powers $(\tau-\tau_6)^{6j}$), so choosing $b = -a\,\Delta_1^+(\tau_6)/\Delta_3^+(\tau_6)$ to make $f^6$ vanish at $\tau_6$ forces a zero of order exactly $6$ there; by Lemma~\ref{lem:valence} this accounts for the entire valence, and $f := (f^6)^{1/6}$ is then a genuine holomorphic weight-$2$ form on $\mathbb H$ with a simple zero at $\tau_6$. The ratio can be evaluated exactly. With $\gamma=\left(\begin{smallmatrix}0&-1\\1&1\end{smallmatrix}\right)\in\mathrm{SL}_2(\mathbb Z)$ and $\mu:=e^{i\pi/3}$ one has $\gamma\mu=-1/(\mu+1)=\tau_6$, so $\Delta(\tau_6)=(\mu+1)^{12}\Delta(\mu)=3^6\,\Delta(\mu)$, using $\mu+1=\sqrt3\,e^{i\pi/6}$; and $\Delta(3\tau_6)=\Delta(3\tau_6+2)=\Delta(\mu)$. Hence $\Delta_1^+(\tau_6)=2\cdot3^6\,\Delta(\mu)$ and $\Delta_3^+(\tau_6)^2=\Delta(\tau_6)\,\Delta(3\tau_6)=\bigl(27\,\Delta(\mu)\bigr)^2$. On the line $\Re\tau=-\tfrac12$ the nome $q=e^{2\pi i\tau}$ is real and negative, so $\Delta_3^+=q^2\prod_{n\ge1}(1-q^n)^{12}(1-q^{3n})^{12}>0$ there, while $q(\mu)=-e^{-\pi\sqrt3}<0$ gives $\Delta(\mu)<0$; therefore $\Delta_3^+(\tau_6)=-27\,\Delta(\mu)$, and
\[
\frac{b}{a}\;=\;-\,\frac{\Delta_1^+(\tau_6)}{\Delta_3^+(\tau_6)}\;=\;-\,\frac{2\cdot 3^6\,\Delta(\mu)}{-27\,\Delta(\mu)}\;=\;54 .
\]
One checks further that $f|_2 T = e^{i\pi/3}f$ and $f|_2 W_3 = -f$, and that the resulting $F = \int_{i\infty}^\tau f$ satisfies $F(W_3\tau) + F(\tau) = K$ for a nonzero constant $K = 2F(\tau_2)$; rescaling by $1/K$ produces a function satisfying all of~(i)--(ii) with $\alpha = \tfrac16 \ne \tfrac13$. Thus the Fricke symmetry and $q$-block ansatz alone do not determine the Cardy--Smirnov function at level $3$. The spurious solution is excluded precisely by~(iii): its derivative vanishes at $\tau_6$, whereas the conformal-geometric origin of $\Pi_h^{\mathrm{par}}$ forces $d\Pi_h^{\mathrm{par}}/d\tau$ to be nowhere zero. Geometrically, $\tau_6$ coincides with the vertex $\rho=\tau(\infty)$ of the Schwarz triangle $\mathcal T$ (Proposition~\ref{prop:schwarz-triangle}); it is the pole of the hauptmodul, the $\pi/3$ cone point, where the branch factors of the conformal map cancel the ramification of $v$ to leave a nonvanishing derivative; the spurious solution places a critical point exactly where the genuine one cannot have one. This phenomenon has no analogue in the rectangular case, where $\dim S_{12}(\mathrm{SL}_2(\mathbb Z)) = 1$.
\end{remark}

\section{Discussion and further work}
\label{sec:discussion}

\subsection{Summary}

We have proved three $\pi/3$-parallelogram results. First, the Cardy--Smirnov function on the $\pi/3$ parallelogram admits a closed conformal-map formula as an incomplete beta integral (Theorem~\ref{thm:cardy-parallelogram}), in direct analogy with Cardy's original 1992 formula on the rectangle. Second, this function admits a closed formula as an integral of the modular form $\eta(\tau)^2\eta(3\tau)^2$ (Theorem~\ref{thm:main-almost-modular}), which is a weight-$2$ cusp form on $\Gamma_0(3)$ with respect to a non-trivial cubic multiplier system. Third, the Cardy--Smirnov function is characterized by the Fricke symmetry $F(-1/(3\tau)) = 1 - F(\tau)$ together with a $q$-block ansatz and the nondegeneracy condition that $F'$ be nowhere vanishing (Theorem~\ref{thm:uniqueness}).

The comparison with~\cite{KZ} is instructive. In the rectangular case, adjoining the Fricke-type involution produces the full modular group $\mathrm{SL}_2(\mathbb Z)$, and the corresponding weight-$12$ cusp form space is one-dimensional (spanned by $\Delta$). There the Fricke symmetry and $q$-block ansatz alone determine the function. In the level-$3$ setting, even after adjoining the Fricke involution one remains on the smaller Fricke extension $\Gamma_0^+(3)$, whose weight-$12$ cusp-form space is two-dimensional. As a result the symmetry hypotheses fall one condition short, and there is a genuine second solution (Remark~\ref{rem:ghost}); one must impose the geometric nondegeneracy condition that $F'$ not vanish---automatic for a conformal crossing probability---to single out the Cardy--Smirnov function. This is the essential new feature of the parallelogram relative to the rectangle.

\subsection{Crossing probabilities for larger \texorpdfstring{$n$}{n}}

For other values of $n$, one may ask whether the $\pi/n$ parallelogram admits an analogous modular reformulation of its Cardy--Smirnov function. The construction of \S\ref{sec:SC}--\S\ref{sec:proof-main} separates into two ingredients:

\begin{itemize}
\item \emph{Existence of a modular formula:} the projective monodromy of the Schwarz--Christoffel hypergeometric equation must be arithmetic, i.e.\ commensurable with $\mathrm{SL}_2(\mathbb Z)$. For the $\pi/n$ parallelogram the Schwarz--Christoffel integrand is $w^{1/n-1}(w-u)^{-1/n}(w-1)^{1/n-1}$, and the computation of Theorem~\ref{thm:r-ratio} goes through unchanged to give
\[
r(u)=\frac{F_n(u)}{F_n(1-u)},\qquad F_n:={}_2F_1\!\left(\tfrac1n,1-\tfrac1n;1;\,\cdot\,\right),
\]
with $F_n$ the hypergeometric function of Ramanujan's signature-$n$ theory. The exponent differences of the associated Gauss equation are $\lambda_0=\lambda_1=0$ and $\lambda_\infty=1-\tfrac2n$, so the Schwarz triangle has angles $0,0,\pi(1-\tfrac2n)$. The reflections in its sides generate a discrete group exactly when $\pi(1-\tfrac2n)=\pi/q$ with $q\in\mathbb Z_{\ge2}\cup\{\infty\}$, that is, when $q=n/(n-2)$ is an integer; since $(n-2)\mid n$ forces $(n-2)\mid 2$, this happens only for $n=2,3,4$, with $q=\infty,3,2$ and triangle groups $\Gamma(2)$, $\Gamma_0(3)$, $\Gamma_0(2)$ respectively. For those three values the argument of~\S\ref{sec:monodromy} applies verbatim. In particular $n=2$ recovers the rectangle of Kleban and Zagier~\cite{KZ}: there $v=\theta_2^4/\theta_3^4$ and $F_2=\theta_3^2$, so that by Jacobi's identity $\theta_2\theta_3\theta_4=2\eta^3$,
\[
v^{1/3}(1-v)^{1/3}F_2(v)^2=(\theta_2\theta_3\theta_4)^{4/3}=2^{4/3}\,\eta(\tau)^4,
\]
in place of the identity $bc=3\,\eta(\tau)^2\eta(3\tau)^2$ used in Theorem~\ref{thm:modular-derivative}. The value $n=6$ is different in kind: $\lambda_\infty=\tfrac23$ is not the reciprocal of an integer, the reflected triangles overlap, and the inverse Schwarz map is not single-valued on $\mathbb H$, so Corollary~\ref{cor:u-v-hauptmodul} has no direct analogue; we do not attempt to settle it here (compare Takeuchi's classification of arithmetic triangle groups~\cite{Takeuchi}).

\item \emph{Uniqueness:} the proof of Theorem~\ref{thm:uniqueness} reduces, via the nondegeneracy condition, to the small dimension $\dim S_{12}(\Gamma_0^+(3)) = 2$. For larger levels the relevant cusp-form spaces grow, and even with a nondegeneracy condition the Fricke symmetry and $q$-block ansatz may no longer determine the Cardy--Smirnov function uniquely; identifying sufficient additional constraints is open.
\end{itemize}

This suggests two natural questions:
\begin{enumerate}
\item For $n \in \{4, 6\}$, what is the closed modular formula for the Cardy--Smirnov function on the $\pi/n$ parallelogram, and what eta quotient plays the role of $\eta(\tau)^2\eta(3\tau)^2$?
\item When the weight-$k$ cusp form space at the relevant level has dimension $\ge 2$, what additional hypotheses---beyond the Fricke symmetry, the $q$-block ansatz, and the nonvanishing of $F'$ used here---are needed to characterize the Cardy--Smirnov function uniquely? Already at level $3$ the nonvanishing condition is essential (Remark~\ref{rem:ghost}).
\end{enumerate}

\appendix

\section{The Schwarz--Christoffel derivative for \texorpdfstring{$P_r$}{P\_r}}
\label{app:SC}

We recall how \eqref{eq:fu-def} arises. For a Schwarz--Christoffel map from $\mathbb H$ onto a polygon with interior angles $\alpha_1\pi,\dots,\alpha_n\pi$ at prevertices $x_1,\dots,x_n\in\mathbb R\cup\{\infty\}$, the derivative has the form
\[
f'(z)=C\prod_{j=1}^n (z-x_j)^{\alpha_j-1},
\]
with the factor at $\infty$ absorbed into $C$~\cite[Ch.~8]{Stein}. For $P_r$ the interior angles are $(\tfrac{\pi}{3},\tfrac{2\pi}{3},\tfrac{\pi}{3},\tfrac{2\pi}{3})$, so the exponents are $-\tfrac23,-\tfrac13,-\tfrac23,-\tfrac13$. Choosing prevertices $(0,u,1,\infty)$ and absorbing the factor at $\infty$ yields
\[
f_u'(z)=C_{\mathrm{SC}}(u)\,z^{-2/3}(z-u)^{-1/3}(z-1)^{-2/3},
\]
and integrating from $z_0=0$ with $A_0=f_u(0)=0$ gives \eqref{eq:fu-def}.

\section{Endpoint estimates for the period ratio}
\label{app:endpoint}

We prove Lemma~\ref{lem:endpoint-r}. Recall
\[
N(u)=\int_0^1 t^{-2/3}(1-t)^{-1/3}(1-ut)^{-2/3}\,dt,\qquad D(u)=\int_0^1(u+(1-u)t)^{-2/3}t^{-1/3}(1-t)^{-2/3}\,dt,
\]
obtained from the integrals in Definition~\ref{def:AB} by the substitutions $w=ut$ and $w=u+(1-u)t$.

\paragraph{Behavior as $u\to 1^-$.} Restricting the integral defining $N(u)$ to $t\in[1/2,1]$ and writing $s=1-t$,
\[
N(u)\ge\int_0^{1/2}s^{-1/3}(1-u+us)^{-2/3}\,ds\ge 2^{-2/3}\int_{1-u}^{1/2}s^{-1}\,ds=2^{-2/3}\log\frac{1}{2(1-u)}\to+\infty,
\]
using $1-u+us\le (1-u)+s$. For $D(u)$, dominated convergence (with bound $2^{2/3}$ on $u\in[1/2,1)$) gives $D(u)\to\int_0^1 t^{-1/3}(1-t)^{-2/3}\,dt=B(\tfrac23,\tfrac13)$. Hence $r(u)=N(u)/D(u)\to+\infty$.

Conversely, fix $\delta\in(0,1)$ and assume $u\le 1-\delta$. Then $1-ut\ge\delta$, so $N(u)\le\delta^{-2/3}B(\tfrac13,\tfrac23)$. On the other hand, since $[1-\delta/2,1]\subset[u,1]$, reverting to the original variable gives $D(u)\ge\int_{1-\delta/2}^1(1-w)^{-2/3}\,dw=3(\delta/2)^{1/3}$. Therefore $r(u)\le C_\delta:=\delta^{-2/3}B(\tfrac13,\tfrac23)/(3(\delta/2)^{1/3})<\infty$, so $r(u)\to+\infty$ forces $u\to 1^-$.

\paragraph{Behavior as $u\to 0^+$.} By dominated convergence, $N(u)\to\int_0^1 t^{-2/3}(1-t)^{-1/3}\,dt=B(\tfrac13,\tfrac23)$ as $u\to 0^+$. For $D(u)$, if $0<u<1/4$ then on $[2u,1/2]$ we have $w-u\le w$ and $1-w\le 1$, so reverting to the original variable,
\[
D(u)\ge\int_{2u}^{1/2}w^{-2/3}w^{-1/3}\,dw=\log\frac{1}{4u}\to+\infty.
\]
Hence $r(u)\to 0^+$. For the converse, for $u\ge\delta$ one has $N(u)\ge(\delta/2)^{1/3}$ (restrict to $w\in(0,\delta/2]$, where $(u-w)^{-1/3}\ge 1$) and $D(u)\le\delta^{-2/3}B(\tfrac23,\tfrac13)$ (since $w\ge\delta$ on $[u,1]$). Thus $r(u)\ge c_\delta>0$ uniformly in $u\ge\delta$, so $r(u)\to 0^+$ forces $u\to 0^+$.

\paragraph{Monotonicity and analyticity.} In the first displayed form of $N$ above, the factor $(1-ut)^{-2/3}$ is, for each fixed $t\in(0,1)$, strictly increasing in $u\in(0,1)$; hence $N$ is strictly increasing. Likewise, in the displayed form of $D$, the factor $(u+(1-u)t)^{-2/3}$ is strictly decreasing in $u$ for each fixed $t\in(0,1)$, because $\frac{\partial}{\partial u}\bigl(u+(1-u)t\bigr)=1-t>0$; hence $D$ is strictly decreasing. Therefore $r=N/D$ is strictly increasing on $(0,1)$. Both $N$ and $D$ are restrictions of the hypergeometric functions $B(\tfrac13,\tfrac23)\,G(u)$ and $B(\tfrac13,\tfrac23)\,G(1-u)$ (proof of Theorem~\ref{thm:r-ratio}), hence real-analytic on $(0,1)$, and $D>0$ there, so $r$ is real-analytic. Together with the endpoint limits established above and the intermediate value theorem, $r$ is a real-analytic strictly increasing bijection $(0,1)\to(0,\infty)$, which is the statement of Lemma~\ref{lem:endpoint-r}.

\section{Acknowledgements}
The author would like to thank his advisor Dan Romik, who suggested the problem addressed in this paper, the writing of which would not have been possible without his insight and guidance throughout. The author would also like to extend his thanks to Elena Fuchs, Anne Schilling, Andrew Waldron, and Jaroslav Trnka for their questions and discussion during the author's qualifying exam.

\end{document}